\documentclass{article}

\usepackage{amsthm}
\usepackage{graphicx} 
\usepackage{amsfonts}
\usepackage{amsmath}
\usepackage{amssymb}
\usepackage{color}
\usepackage{url}
\usepackage{array}

\usepackage{subcaption}
\usepackage{tabularx}
\usepackage[ruled,linesnumbered,vlined]{algorithm2e}

\newtheorem{definition}{Definition}
\newtheorem{proposition}{Proposition}
\newtheorem{corollary}{Corollary}
\newtheorem{assumption}{Assumption}
\newtheorem{remark}{Remark}

\usepackage{tikz}
\usetikzlibrary{shapes.geometric, calc}
\usepackage{booktabs}
\usepackage{longtable}
\usepackage{adjustbox}
\usepackage{pdflscape}
\usepackage{float} 
\usepackage[colorlinks=true, linkcolor=blue, citecolor=blue, urlcolor=blue]{hyperref}
\hypersetup{
	pdftitle={Maximum Covering Network Design on Graphs with Low Connectivity: Dynamic Programming and Block-Cut Trees},
	pdfauthor={Felix Rauh, Jannik Matuschke, Hande Yaman},
	pdfkeywords={facility location, network design, dynamic programming, integer programming}
}

\newcommand{\suppref}[1]{Appendix~\ref{#1}}
\newcommand{\extraref}[1]{Appendix~\ref{#1}}

\usepackage[textsize=tiny,disable]{todonotes}
\usepackage{natbib}
\bibpunct[, ]{(}{)}{,}{a}{}{,}%

\title{Maximum Covering Network Design on Graphs with Low Connectivity:\\
Dynamic Programming and Block-Cut Trees}
\author{Felix Rauh, Jannik Matuschke, Hande Yaman}

\begin{document}
	\maketitle

	\begin{abstract}
		Planning accessible public services such as health care, emergency response, and schools
		often requires not only choosing where to open facilities but also improving the network
		that connects people to them, for example upgrading flood-prone roads in vulnerable regions.
		Most location models, however, take the network as fixed and the budget as given. We study
		the Maximum Covering Network Design Problem, in which a single budget is shared between
		opening facilities and upgrading weak links to maximize the population within a target travel
		distance of an open facility. The problem is hard even on the simplest networks, and planners
		usually want to see how coverage grows with the budget, not a single plan.

		We develop an exact dynamic-programming framework that exploits a property common to real
		road networks: their low connectivity, with many cut points whose removal disconnects the
		network. On trees, the recursion is self-contained: each state reduces to a few simple facility and
		upgrade choices that are fast to compute without a solver, giving predictable running times;
		for larger budgets and travel distances it outperforms solving the MILP formulation directly. On general low-connectivity networks, the framework
		decomposes the problem at the cut points and embeds a given MILP formulation to solve the
		resulting pieces, coordinating them through coverage conditions at the interfaces. This lets
		us compare a formulation on its own against the same formulation inside the framework: across
		306 test cases the framework matches or outperforms direct solving on more than 80\% of
		instances. Because it evaluates all budget levels in a single run, it also yields the full
		coverage-versus-budget curve at no extra cost, whereas direct solving must split its time
		across individual budgets.
	\end{abstract}

	\noindent\textbf{Keywords:} facility location, network design, dynamic programming, integer programming

	\section{Introduction}
\label{sec:intro}

Infrastructure planning in health care, education, and emergency services involves fundamental trade-offs between facility investments and network improvements.
Opening new facilities increases service capacity, while improving connectivity can extend the reach of existing facilities.
This trade-off is particularly relevant in regions with underdeveloped or vulnerable infrastructure, where accessibility may be limited by poor road conditions (such as flooded gravel roads) rather than a lack of facilities;
investments may then include upgrading gravel roads or installing drainage systems~\citep{cookIntegratingDisasterGovernance2019,rozenbergRockyRoadSmooth2019}.
Despite the practical relevance of combining network design with facility location, most of the location literature assumes the underlying network is fixed.
A second limitation is that the budget is typically treated as a fixed input, whereas in practice it is often part of the planning question: decision-makers need to see how achievable coverage grows with the budget (the coverage-versus-budget curve) rather than the optimal plan for a single budget value.

We address both.
We study a model (the MCNDP, defined below) that jointly optimizes facility locations and network upgrades, and develop exact methods that exploit the low connectivity of real-world networks through a decomposition approach.
Because the decomposition evaluates many budget levels within a single solve, it yields the full budget curve at little additional cost.

\paragraph{The Maximum Covering Network Design Problem.}
We study the \emph{Maximum Covering Network Design Problem} (MCNDP) on an undirected graph $G = (N, E)$,
combining facility location and edge upgrade decisions under a joint budget constraint.
Certain edges are ``weak'' and require investment to become usable;
a coverage radius $\bar{d}$ specifies the maximum distance within which a demand node can be served by an open facility.

\begin{definition}[Maximum Covering Network Design Problem (MCNDP)]
	\label{def:mcndp}
	Given an undirected graph $G = (N, E)$ with demand weights $h_k \in \mathbb{Z}_{\ge 0}$ for $k \in N$, facility opening costs $c^F_j\in \mathbb{Z}_{\ge 0}$ for candidate sites $j \in F \subseteq N$,
	edge upgrade costs $c^E_e\in \mathbb{Z}_{\ge 0}$ and lengths $\ell_e\in \mathbb{Z}_{\ge 0}$ for edges $e \in E$,
	a coverage radius $\bar{d}\in \mathbb{Z}_{> 0}$, and a budget $\bar{b}\in \mathbb{Z}_{> 0}$,
	the MCNDP consists of choosing a set $Y \subseteq F$ of facilities to open and a set $X \subseteq E$ of edges to upgrade such that:
	\begin{enumerate}
		\item The total investment satisfies the budget: $\sum_{j \in Y} c^F_j + \sum_{e \in X} c^E_e \leq \bar{b}$.
		\item The total weight of covered nodes is maximized, where a node $k$ is covered if there exists $j \in Y$ with $d_{G[X]}(k, j) \leq \bar{d}$, where $G[X]=(N,X)$  is the subgraph with edge set $X$ and $d_{G[X]}(k, j)$ is the length of a shortest $k$-$j$-path in $G[X]$.
	\end{enumerate}
\end{definition}

Without loss of generality, we assume $\ell_e > 0$ for all $e \in E$, since any zero-length edge can be contracted in preprocessing.
Existing infrastructure is captured by zero costs: an already-usable edge has $c^E_e = 0$ and an already-open facility $c^F_j = 0$.

The MCNDP generalizes the classical Maximum Covering Location Problem (MCLP), which arises when all edges are already passable (zero upgrade cost).
Moreover, the MCNDP is closely related to the network design problem (NDP), or, more precisely, the multicommodity fixed-charge NDP~\citep{magnantiNetworkDesignTransportation1984}, which selects edges at minimum cost to route flow between origin--destination pairs.
The MCNDP differs from it in objective (maximizing coverage under a budget constraint rather than minimizing cost) and can be viewed as its single-source variant, with all facilities connected to a dummy super-source and all nodes acting as potential targets.

\paragraph{Complexity.}
The MCNDP is NP-hard even on trees: on a star graph with an open facility at the center,
selecting which leaf nodes to connect under a budget constraint corresponds to the 0-1 knapsack problem.
Moreover, even connecting a single demand node to a facility is NP-hard in general,
as finding a minimum-cost path of length at most $\bar{d}$ corresponds to a resource-constrained shortest path problem.
Consequently, no constant-factor approximation exists:
no polynomial-time algorithm can decide whether the optimal coverage is 0 or $h_k$ for an arbitrarily large weight $h_k$ (unless $\text{P}=\text{NP}$).

\paragraph{Focus on low-connectivity graphs.}
While the MCNDP is challenging on general graphs, many real-world networks exhibit low connectivity.
Road networks in rural or developing areas often have few alternative routes due to geographical constraints such as valleys, rivers, or islands;
\citet{tianArticulationPointsComplex2017} show that articulation points, whose removal disconnects the graph, comprise almost 20\% of nodes in three US state road networks.
We therefore focus on trees and, more generally, graphs whose decomposition at articulation points yields subproblems of smaller size
and enables efficient evaluation across many budget levels at once (exploited in Section~\ref{sec:bct} to produce full coverage-versus-budget curves).

\paragraph{Contributions and methodology.}
Our main contributions are as follows:
\begin{itemize}
	\item For tree graphs, we present an assignment-based formulation and a pseudo-polynomial dynamic programming (DP) algorithm that exploits the unique path structure (Section~\ref{sec:treedp}).
	\item For connected graphs with \emph{articulation points} (cut vertices whose removal disconnects the graph), we introduce a decomposition framework based on block-cut trees.
	The key insight is that subproblems on biconnected components can be solved with parameterized border conditions at articulation points, capturing both budget allocation and coverage interactions across blocks (Section~\ref{sec:bct}).
	\item The framework is independent of how the block subproblems are solved: blocks are coordinated only through selector variables and signed border conditions, so each can be handled by any suitable block-level MILP formulation.
	\item In our experiments, we compare our approaches against solving the mixed-integer linear programming (MILP) formulation directly (Section~\ref{sec:experiments}). Since the DP enumerates all budget levels inherently, it yields the complete coverage-versus-budget curve at no additional cost.
\end{itemize}

Beyond the MCNDP, the same framework could extend to other location and network design problems, such as uncapacitated facility location; we discuss possibilities for such extensions in Section~\ref{sec:conclusions}.

\paragraph{Outline.}
The remainder of this paper is organized as follows.
Section~\ref{sec:lit_review} reviews related literature on maximum covering location problems, network design, and decomposition techniques.
Section~\ref{sec:mips} presents a MILP formulation for general graphs and one for tree graphs.
Section~\ref{sec:dp} develops the DP decomposition framework, first applied to trees and then extended to block-cut tree decomposition.
Section~\ref{sec:experiments} reports computational results, and Section~\ref{sec:conclusions} concludes.

\section{Literature review and related problems}
\label{sec:lit_review}

This section reviews the literature on maximum covering location problems, network design, and their combination.
We also discuss solution approaches relevant to our methodology, particularly dynamic programming on trees and decomposition techniques based on graph connectivity.

\paragraph{Maximum Covering Location Problems.}
The MCLP, as introduced by \citet{churchMaximalCoveringLocation1974}, asks to open at most $p$ facilities to maximize covered demand within a distance radius; see \citet{laporteLocationScience2019} for an overview.
The problem is known to be well solvable with integer linear programming~\citep{snyderCoveringProblems2011}, and the state-of-the-art solver by \citet{cordeauBendersDecompositionVery2019a} handles instances with up to 15 million demand points via Benders decomposition.
On trees, polynomial-time DPs are known: \citet{megiddoMaximumCoverageLocation1983} achieve $\mathcal{O}(pn^2)$ for the MCLP, and \citet{tamirOpn2AlgorithmPmedian1996} the same for the related $p$-median problem, using a binary-tree transformation that we also adopt.
Our DP in Section~\ref{sec:treedp} extends these works by replacing the $p$-facility constraint with a knapsack-type budget and adding edge upgrade decisions, yielding pseudo-polynomial rather than polynomial complexity.

\paragraph{Network Design Problems.}
The network design problem (NDP), formalized by \citet{magnantiNetworkDesignTransportation1984} using flow-based formulations, selects edges to satisfy connectivity requirements at minimum cost; see \citet{crainicNetworkDesignApplications2021a} for an overview.
In the multicommodity fixed-charge NDP closest to our setting, a separate commodity is routed for each origin--destination pair (the \emph{multicommodity} aspect), and each selected edge incurs a fixed cost independent of the flow it carries (the \emph{fixed-charge} aspect).
Our flow formulation in Section~\ref{sec:flowmip} builds upon these classical flow-based formulations.
Exact methods for this problem are typically limited to hundreds or a few thousands of nodes and edges~\citep{zetinaExactAlgorithmsBased2019}.
Our framework is independent of the underlying connectivity model and could equally use cut-based formulations~\citep{arslanFlexibleNaturalFormulation2019,arslanBranchandCutAlgorithmAlternative2019}; we focus on the flow formulation throughout this paper.

\paragraph{Combined Facility Location and Network Design.}
An overview of combined facility location and network design problems is provided by \citet{contrerasGeneralNetworkDesign2012}.
\citet{melkoteIntegratedModelFacility2001,melkoteCapacitatedFacilityLocation2001} introduce the uncapacitated facility location/network design problem (UFLNDP) with a cost-minimization objective, studying the trade-off of improved network topology versus opening new facilities.
Structurally close to our setting is the network design problem with relays (NDPR) studied by \citet{leitnerExactApproachesNetwork2019}, in which signal regeneration devices are placed at nodes while additional links may be installed, so that the distance between consecutive devices along a path stays below a given threshold.
As in the MCNDP, node installation and edge installation therefore interact through a distance bound, and the authors likewise develop exact mixed-integer programming approaches (branch-and-price and branch-price-and-cut) rather than heuristics.
The NDPR differs from the MCNDP in that it minimizes total installation cost while serving all origin-destination pairs, whereas we maximize covered demand under a single budget shared between facility and upgrade decisions.

Most relevant to our work is \citet{bucareyBendersDecompositionNetwork2022}, who maximize covered origin-destination pairs subject to a budget constraint on network construction and a length bound on paths.
The MCNDP is a special case of their problem (with a single super-source).
In an earlier study, \citet{murawskiImprovingAccessibilityRural2009} study a similar coverage-maximizing network improvement model with fixed facility locations, solving a flow formulation on a case study in Ghana.
\citet{vanveggelRoadNetworkResilience} address the same model at larger scale with a greedy heuristic.
A related variant by \citet{baldomero-naranjoUpgradingEdgesMaximal2022,baldomero-naranjoComplexityUpgradingVersion2024} considers shortening existing edges rather than binary upgrades, with a complexity analysis on trees similar in spirit to our DP in Section~\ref{sec:treedp}.

\paragraph{Decomposition via Block-Cut Trees.}
Several optimization problems have been solved by decomposing graphs at articulation points into biconnected components (blocks).
\citet{baiouIntegralityFacilityLocation2009} show that the integrality of facility location polytopes can be established block by block: if constraints describe an integral polyhedron for each block, so does the combined system with interaction constraints at articulation points.
Block decomposition has also been applied to degree-dependent spanning tree problems~\citep{landeteDecompositionMethodsBased2017}, switch location~\citep{landeteLocatingSwitches2019},
the $d$-Minimum Branch Vertices problem~\citep{morenoExactHeuristicApproach2018}, the generalized vertex cover problem~\citep{correcherDecompositionTechniquesGeneralized2025},
and the generalized network dismantling problem~\citep{fengNovelAlgorithmGeneralized2024}.

A common property of these approaches is that subproblems on blocks are solved independently, with case distinctions or special constraints handling the interactions at articulation points.
Our approach differs in that we must allocate a shared budget across blocks, and the covering interactions at articulation points involve both directions and distances.
To the best of our knowledge, the MCNDP has not been studied with such a decomposition approach.

\section{MILP formulations for the MCNDP}
\label{sec:mips}
We present two formulations: a flow-based formulation for general graphs (Section~\ref{sec:flowmip}) and an assignment-based formulation for trees (Section~\ref{sec:treemip}).
The flow formulation uses multicommodity flow constraints for connectivity in the tradition of \citet{magnantiNetworkDesignTransportation1984}, and is closest to the flow formulation of \citet{bucareyBendersDecompositionNetwork2022}.

\paragraph{High-level formulation.}
In all formulations, we use binary variables $z_k\in\{0,1\}$ indicating coverage of demand node $k\in N$ and $x_e\in\{0,1\}$ for the decision to upgrade edge $e$ (edges that already exist have $c^E_e = 0$ and $x_e$ can be fixed to 1).
For the flow formulation, facility openings can be represented as edge decisions by introducing a source node $s$ and the facility-edge set $E^F := \{\{s,j\} \mid j \in F\}$, yielding the extended edge set $\overline{E} := E \cup E^F$.
These facility edges satisfy $\ell_{\{s,j\}} := 0$ and $c^E_{\{s,j\}} := c^F_j$.
We use this construction whenever a single flow formulation MILP is solved, on the full graph or on an individual block, but not across the decomposition, as a global source node $s$ would not preserve the tree and block-cut tree structure that the decomposition exploits.
The tree formulation (Section~\ref{sec:treemip}) keeps facilities as nodes of $G$, using explicit facility variables $y_j = x_{\{s,j\}} \in \{0,1\}$.
For a comprehensive summary of all notation, see \suppref{sec:appendix_notation}.

Using only the variables $(x, z)$, both formulations presented below can be written in the unified form
\begin{equation}
\label{eq:unified}
\max\Bigl\{\sum_{k\in N} h_k z_k \Bigm| (x, z) \in \mathcal{F},\;\; \sum_{e \in \overline{E}} c^E_e x_e \le \bar{b},\;\; x_e,z_k \in \{0,1\},\;\; e\in \overline{E},\;\; k \in N \Bigr\},
\end{equation}
where the feasibility set $\mathcal{F}$ encodes the connectivity and covering constraints.
The formulations differ only in how $\mathcal{F}$ is represented:
via flow variables (Section~\ref{sec:flowmip}) or assignments along unique paths on trees (Section~\ref{sec:treemip}).
In Section~\ref{sec:bct_mip}, we extend this to the block-cut tree setting,
where $\mathcal{F}$ is parameterized by border conditions at the articulation points.

\subsection{Flow formulation}
\label{sec:flowmip}
The flow formulation uses the source-node construction introduced above.
Let $\overline{N} := N \cup \{s\}$.
With this construction, a node $i \in N$ is covered if and only if there exists an $s$-$i$-path of length at most $\bar{d}$ via edges in the solution.

To describe coverage as a directed flow from $s$ to the demand nodes, let $\overline{A}$ denote the set of directed arcs containing both $(u,v)$ and $(v,u)$ for each $\{u,v\}\in\overline{E}$.
We use one flow commodity per demand node $k \in N$, and denote the respective flow variables by $f^{k}_a\in\{0,1\} $
for $a\in \overline{A}$.
For a node $v \in \overline{N}$, let $\delta^+(v)$ denote all outgoing arcs and $\delta^-(v)$ all incoming arcs of $v$ in $\overline{G}= (\overline{N},\overline{A})$.

Using the edge representation for facility openings as discussed above, we obtain:
{
	
	\begin{align}
		\max \quad & \sum_{k \in N} h_k z_k &&&& \label{eq:flow2_objective} \\[2pt]
		\text{s.t.} \quad & \sum_{e \in \overline{E}} c^E_e x_e
		&\;\le\;& \bar{b} && \label{eq:flow2_budget} \\
		& \sum_{a \in \delta^+(v)} f^k_a - \sum_{a \in \delta^-(v)} f^k_a
		&\;=\;& \begin{cases} z_k & v=s,\\ -z_k & v=k,\\ 0 & \text{otherwise}
		\end{cases}
		&\quad& \forall v \in \overline{N},\, \forall k \in N
		\label{eq:flow2_conservation} \\
		& f^k_{(u,v)} + f^k_{(v,u)}
		&\;\le\;& x_e
		&\quad& \forall e=\{u,v\} \in \overline{E},\, \forall k \in N
		\label{eq:flow2_link_edge} \\
		& \sum_{a \in \overline{A}} \ell_a f^k_a
		&\;\le\;& \bar{d}\,z_k
		&\quad& \forall k \in N \label{eq:flow2_length} \\
		& \rlap{$x_e \in \{0,1\}\ \forall e \in \overline{E}, \quad z_k \in \{0,1\}\ \forall k \in N, \quad f^k_a \in \{0,1\}\ \forall k \in N,\, a \in \overline{A}$} &&&& \label{eq:flow2_domain}
	\end{align}
	}
Objective \eqref{eq:flow2_objective} and budget constraint \eqref{eq:flow2_budget} are as in the high-level formulation.
Constraint \eqref{eq:flow2_conservation} enforces flow conservation with supply at $s$ and demand at $k$;
\eqref{eq:flow2_link_edge} links flow to edge decisions (including facility edges adjacent at $s$);
and \eqref{eq:flow2_length} bounds the path length for covered nodes by $\bar{d}$.
Theoretically, the integrality of the flow variables $f$ in~\eqref{eq:flow2_domain} can be relaxed, since given integral $x$ and $z$, an integral optimal flow exists by standard network flow arguments~\citep{bucareyBendersDecompositionNetwork2022}.
In our experiments, however, relaxing it does not improve solver performance.

\subsection{Assignment-based MILP formulation on trees}
\label{sec:treemip}
When $G$ is a tree, paths are unique: a node $k$ is covered by facility $j$ if and only if $j$ is open and all edges on the unique $j$-$k$-path of length at most $\bar{d}$ are built.

We use variables $x_e$, $y_j$, and $z_k$ as introduced before, and add assignment variables $r_{jk} \in \{0,1\}$ indicating that demand node $k$ is served by facility $j$.
Let $P_{jk} \subseteq E$ denote the set of edges on the unique path between $k\in N$ and facility $j\in F$.
For facilities $j$ that exceed the length bound to node $k$, i.e. $d(k,j)>\bar{d}$, we can set $r_{jk}=0$ and consider for each demand node $k$ its set of reachable facilities as $F_k \subseteq F$.
\begin{align}
	\max \quad & \sum_{k \in N} h_k z_k &&&& \label{eq:tree1_obj} \\[2pt]
	\text{s.t.} \quad & \sum_{j\in F}c^F_j y_j + \sum_{e \in E}c^E_e x_e
	&\;\leq\;& \bar{b} && \label{eq:tree1_budget} \\
	& r_{jk} &\;\leq\;& y_j &\quad& \forall j \in F_k,\, \forall k \in N \label{eq:tree1_open} \\
	& r_{jj} &\;=\;& y_j &\quad& \forall j \in F \label{eq:tree1_self} \\
	& r_{jk} &\;\leq\;& x_e &\quad& \forall e\in P_{jk},\, \forall j\in F_k,\, \forall k\in N \label{eq:tree1_path} \\
	& z_k &\;=\;& \sum_{j\in F_k} r_{jk} &\quad& \forall k \in N \label{eq:tree1_cover} \\
	& r_{jk} &\;\geq\;& 0 &\quad& \forall j\in F_k,\, \forall k\in N \label{eq:tree1_nonneg} \\
	& \rlap{$x_e \in \{0,1\}\ \forall e \in E, \quad y_j \in \{0,1\}\ \forall j \in F, \quad z_k \in \{0,1\}\ \forall k \in N.$} &&&& \label{eq:tree1_domain}
\end{align}

Objective, budget, and integrality constraints \eqref{eq:tree1_obj}--\eqref{eq:tree1_budget}, \eqref{eq:tree1_domain} are as in the high-level formulation.
Constraint \eqref{eq:tree1_open} requires the facility to be open for an assignment, \eqref{eq:tree1_self} ensures an open facility serves itself, \eqref{eq:tree1_path} enforces that all path edges are built, and \eqref{eq:tree1_cover} links assignments to coverage.
Note that, in \eqref{eq:tree1_domain}, the integrality of $r_{jk}$ has been relaxed to non-negativity by a $ \pm \epsilon$-argument that shows that $r_{jk}$ cannot be fractional at an extreme point.

\section{Dynamic programming approach}
\label{sec:dp}
We develop a dynamic programming (DP) decomposition framework for the MCNDP.
The core idea is to decompose the graph along a tree structure and solve subproblems bottom-up, combining their solutions via budget allocation and coverage interactions at interface points.
On trees (Section~\ref{sec:treedp}), each node constitutes a simple subproblem with a small number of binary decisions.
On general graphs with limited connectivity (Section~\ref{sec:bct}), the block-cut tree provides the decomposition, and each biconnected component becomes a subproblem solved via MILP.
Unlike the classic $p$-median DP on trees, our budget constraint is of knapsack type, leading to pseudo-polynomial time complexity \citep{martelloKnapsackProblemsAlgorithms1990}.
Table~\ref{tab:tree_vs_bct} summarizes the key differences between the two graph classes.

\begin{table}[ht]
	\centering
	\small
	\begin{tabular}{l|l|l}
		\hline
		\textbf{Component} & \textbf{DP on trees (Sec.~\ref{sec:treedp})} & \textbf{DP on BC-trees (Sec.~\ref{sec:bct})} \\
		\hline
		Subproblem & Single node $v$ & Biconnected component $B$ \\
		Tree structure & Rooted binary tree & Rooted block-cut tree \\
		Interface points & Every node & Articulation points \\
		Budget allocation & Split between children $v_1, v_2$ & Split between $B$ and children \\
		Relevant radii & $\mathcal{O}(\min(n, \bar{d}))$ values & $\mathcal{O}(\bar{d})$ values \\
		Local decisions & Open facility? Upgrade edges? & Solve MCNDP on $B$ (NP-hard) \\
		\hline
	\end{tabular}
	\caption{Comparison of the DP framework on trees and block-cut trees.
		On trees, each subproblem has a small number of binary decisions.}
	\label{tab:tree_vs_bct}
\end{table}

\subsection{Border conditions and DP states}
\label{sec:dp_framework}

We describe the DP abstractly, on a rooted \emph{decomposition tree} whose nodes are the subproblems.
It has two instantiations: the rooted binary tree of Section~\ref{sec:treedp}, where each subproblem is a single graph node $v$, and the rooted block-cut tree of Section~\ref{sec:bct}, where each subproblem is a block $B$.

A subproblem $S$ is therefore a set of vertices---a single vertex $\{v\}$ on trees, or the vertices of a block $B$ on BC-trees---equipped with a set of \emph{child subproblems} $\mathcal{C}_S$ and a single \emph{interface point} connecting it to its parent: the parent edge on trees, or the parent articulation point on BC-trees.
We write $T_S$ for the \emph{subtree rooted at $S$}, the subgraph induced by $S$ together with all its descendants in the decomposition tree.
On trees this is the usual subtree $T_v$; on BC-trees it is the subgraph formed by the union of all blocks in the subtree of the block-cut tree rooted at $S$.

The DP value $V(S, b, r)$ is the maximum coverage achievable within the \emph{entire} subtree $T_S$, under budget $b \in \{0, \ldots, \bar{b}\}$ and a \emph{signed covering condition} $r \in \{-\bar{d}, \ldots, -1, 0, 1, \ldots, \bar{d}\}$ at the interface point, whose sign indicates the direction of coverage flow and whose magnitude indicates its reach.
Local decisions inside $S$ (opening facilities, upgrading edges) must conform to the local budget and the covering condition, while the child subtrees enter recursively through the budget allocated to each child and the choice of child states.
The three sign cases of $r$ are:
\begin{itemize}
	\item $r = 0$ (\emph{BLANK state}): the two sides are decoupled at the interface; no coverage flows through it.
	At the root, this is the only relevant condition, and $V(\text{root}, \bar{b}, 0)$ yields the optimal value.
	\item $r > 0$ (\emph{OUT state}): an external facility at the parent side provides coverage with remaining radius $r$ at the interface point,
	i.e., nodes in $S$ within distance $r$ of the interface point are covered for free.
	\item $r < 0$ (\emph{INT state}): the subproblem must provide coverage to the parent side; some facility in $S$ must be within distance $\bar{d}-|r|$ of the interface point, or, equivalently, nodes on the parent side with maximum distance $|r|$ can be covered.
\end{itemize}
The three cases are illustrated in Figure~\ref{fig:dp_border_conditions} (drawn for the tree case; the block-cut tree case is analogous, with the node $v$ replaced by a block $B$).
Intuitively, large positive $r$ makes abundant external coverage available to $S$, whereas large negative $r$ imposes a tight outward-coverage obligation that may not always be feasible.
This generalizes the two-state framework of \cite{megiddoMaximumCoverageLocation1983}, which already distinguishes OUT and INT states (there, for the maximum covering problem with $p$ facilities); here, the signed index unifies all three cases into a single function $V$ that is monotone in $r$.
In what follows, we use the sign of $r$ and the state names interchangeably, preferring the names when it aids readability (e.g., ``the OUT case'' for $r>0$).


\begin{figure}[ht]
\centering
\tikzset{
    itria/.style={
        draw, dashed, shape border uses incircle,
        isosceles triangle, shape border rotate=90, anchor=apex
    }
}

\begin{tabular}{ccc}
\begin{tikzpicture}[scale=.55, sibling distance=4cm, 
    level 2/.style={sibling distance=2.5cm}]
    \node[circle,draw] {\tiny $v_r$}
    child{node[circle,draw] {}{ node[draw, itria, scale=.5] {\phantom{Q1}}}}
    child{ node[circle, draw] {}
        child[thick, red, dashed]{ node[circle, draw, solid, black, fill=blue!20] {$v$}
            child[draw, solid, black]{node[circle,draw] {}
                {node[black, itria, scale=.35] {\phantom{Q1}}}}
            child[draw, solid, black]{node[circle,draw] {}
                {node[black, itria, scale=.35] {\phantom{Q1}}}}
        }
        child[]{ node[circle, draw] {}}
    };
\end{tikzpicture}
&
\begin{tikzpicture}[scale=.55, sibling distance=4cm, 
    level 2/.style={sibling distance=2.5cm}]
    \node[circle,draw] {\tiny $v_r$}
    child{node[circle,draw] {}{ node[draw, itria, scale=.5] {\phantom{Q1}}}}
    child{ node[circle, draw] {}
        child[thick, green!50!black, ->]{ node[circle, draw, fill=blue!20] {$v$}
            child[draw, black, -]{node[circle,draw] {}
                {node[black, itria, scale=.35] {\phantom{Q1}}}}
            child[draw, black, -]{node[circle,draw] {}
                {node[black, itria, scale=.35] {\phantom{Q1}}}}
        }
        child[thick, green!50!black, <-]{ node[rectangle, fill=green!30, 
            draw=green!50!black, text=black] {$j$}}
    };
\end{tikzpicture}
&
\begin{tikzpicture}[scale=.55, sibling distance=4cm, 
    level 2/.style={sibling distance=2.5cm}]
    \node[circle,draw] {\tiny $v_r$}
    child{node[circle,draw] {}{ node[draw, itria, scale=.5] {\phantom{Q1}}}}
    child{ node[circle, draw] {}
        child[thick, green!50!black, <-]{ node[circle, draw, fill=blue!20] {$v$}
            child[draw, black, <-]{node[circle, draw, fill=green!30] {}
                {node[black, itria, scale=.4, fill=green!20,
                    append after command={
                        node[rectangle, fill=green!30, draw=green!50!black, inner sep=1pt]
                        at (\tikzlastnode.center) {\scriptsize $j$}
                    }] {\phantom{111}}}}
            child[draw, black, -]{node[circle,draw] {}
                {node[black, itria, scale=.35] {\phantom{Q1}}}}
        }
        child[thick, green!50!black, ->]{ node[circle, fill=gray!5, draw] {\scriptsize$w$}}
    };
\end{tikzpicture}
\\[1ex]
\small
\textbf{(a)} $V(v,b,0)$ &
\textbf{(b)} $V(v,b,r),\ r>0$ &
\textbf{(c)} $V(v,b,r),\ r<0$
\end{tabular}

\caption{The three cases of the signed-radius DP value $V(v,b,r)$ on trees.
    \textbf{(a)}~$r=0$: no covering interaction (BLANK).
    \textbf{(b)}~$r>0$: an external facility $j$ (OUT).
    \textbf{(c)}~$r<0$: an internal facility in $T_v$ (INT) can cover external nodes     $w$.
    }
\label{fig:dp_border_conditions}
\end{figure}

Not every value $r \in \{-\bar{d}, \ldots, \bar{d}\}$ needs to be considered: we denote by $\mathcal{R}_S$ the set of \emph{relevant signed radii} at the interface of $S$.
This set is obtained by pre-processing with the goal to only contain the distances at which coverage can actually be provided or received (for instance, $r$ is not required in $\mathcal{R}_S$ if no path of length $|r|$ from the interface point to another node exists).

\paragraph{High-level recursion.}
For a subproblem $S$ with child problems $\mathcal{C}_S$ and parent radius $r$, the DP optimizes over two types of decisions:
$x$, encoding local decisions inside $S$, and for each child $C \in \mathcal{C}_S$ a selection $\lambda^C = (b_C, r_C) \in \Lambda_C$, determining the budget allocation and covering condition imposed on $C$.
Here, $\Lambda_C \subseteq \{0,\ldots,\bar{b}\} \times \mathcal{R}_C$ denotes the set of feasible child states.
The recursion is:
\begin{equation}
	\label{eq:dp_highlevel}
	V(S, b, r) \;=\; \max_{x, \, \lambda} \Biggl(
	\underbrace{h_S(x,\lambda)}_{\text{coverage inside } S}
	\;+\; \underbrace{\sum_{C \in \mathcal{C}_S} V(C, b_C, r_C)}_{\text{child contributions}}
	\Biggr),
\end{equation}
where the child terms $V(C, b_C, r_C)$ are precomputed values that themselves represent the optimal coverage of their entire respective subtrees.
Thus, $V(S, b, r)$ accounts for coverage across the full subtree rooted at $S$, even though only the decisions in $S$ and the allocation to children are optimized at this level.
Note also that the local contribution $h_S(x,\lambda)$ depends on the child radii as well,
since a child with $r_C < 0$ can provide coverage to $S$ whereas a child with $r_C > 0$ means that $S$ needs to provide coverage.

\paragraph{Monotonicity.}
The DP values are monotonically non-decreasing in both arguments:
\[
V(S, b, r) \;\leq\; V(S, b', r')
\qquad \text{whenever } b \leq b' \text{ and } r \leq r'.
\]
Intuitively, a larger budget cannot hurt, and a larger signed radius either relaxes an obligation (when $r<0$) or provides more free coverage (when $r>0$).
Monotonicity allows pruning the state set $\Lambda_C$: any state $(b_C, r_C)$ that is dominated, i.e., another state $(b_C', r_C') \in \Lambda_C$ achieves at least the same DP value while consuming no more budget ($b_C' \le b_C$) and imposing a weaker covering condition ($r_C' \le r_C$, hence more favorable to the parent), can be removed without loss of optimality, since it would never be required in an optimal parent solution.

\subsection{DP on trees}
\label{sec:treedp}

\paragraph{Rooted tree and binary tree transformation.}
We root the tree $G$ at an arbitrary node $v_r$ and process subtrees bottom-up.
For simplicity of exposition, we assume that $G$ is a binary tree (each node has at most two children).
Any tree can be transformed into a binary tree by inserting at most $n$ dummy nodes without facility and demand (i.e., $c^F_v=\infty$ and demand weight $h_v=0$ for such a node $v$); this transformation preserves optimality \citep{tamirOpn2AlgorithmPmedian1996}.

\paragraph{DP states.}
For each node $v$, the subproblem is the subtree $T_v$, and the interface point is the parent edge $e$.
More specifically, a positive $r>0$ means the remaining coverage distance entering $e$ from the parent side, while a negative $r<0$ means the remaining coverage entering $e$ from $v$.
The set of considered signed radii at node $v$ derives from the distances to reachable nodes outside $T_v$,
\[
\mathcal{R}_v \; = \; \bigl\{\, \pm(\bar{d} - d_G(v,u)) \;\bigm|\; u \in N\setminus T_v,\; d_G(v,u) < \bar{d} \,\bigr\} \,\cup\, \{0\},
\]
with $|\mathcal{R}_v| \in \mathcal{O}(\min(|N|, \bar{d}))$.

\paragraph{Base case (leaves).}
At a leaf $v$ with a parent edge $e$ with length $\ell_e$, the only decision is whether to open a facility.
If $r \geq \ell_e$, then $v$ is already covered by the parent side for free.
Infeasible states (no facility or insufficient budget) in the $r<0$ case evaluate to $-\infty$.
\begin{align}
	V(v,b,r) \;=\; \begin{cases}
		h_v & \text{if } r \geq \ell_e, \\[2pt]
		\begin{cases} h_v & \text{if } v \in F \text{ and } b \ge c^F_v, \\ 0 & \text{otherwise,} \end{cases} & \text{if } 0 \leq r < \ell_e, \\[10pt]
		\begin{cases} h_v & \text{if } v \in F \text{ and } b \ge c^F_v, \\ -\infty & \text{otherwise,} \end{cases} & \text{if } r < 0.
	\end{cases}
\end{align}

\paragraph{Recursive case (inner nodes).}
Let $v$ be an inner node with children $v_1, v_2$ and child edges $e_1=\{v,v_1\}$, $e_2=\{v,v_2\}$ with upgrade costs $c^E_{e_1}, c^E_{e_2}$.
The local decisions are: open a facility at $v$, upgrade edge $e_1$, upgrade edge $e_2$.

We distinguish four coverage situations for $v$ (not covered; covered by a facility at $v$ or outside $T_v$; covered by the left subtree; covered by the right subtree), and capture each by a helper function that optimizes the budget split and edge-upgrade decisions (see Figure~\ref{fig:dp_decisions}).
Any helper or child state evaluated with negative remaining budget returns $-\infty$.


\begin{figure}[ht]
\centering
\tikzset{
    itria/.style={
        draw, dashed, shape border uses incircle,
        isosceles triangle, shape border rotate=90, anchor=apex
    }
}

\begin{tikzpicture}[scale=.7, sibling distance=5cm, 
    level 2/.style={sibling distance=3cm}]
    \node[circle,draw] {\tiny $r$}
    child{node[circle,draw] {}{ node[draw, itria, scale=.6] {\phantom{Q1}}}}
    child{ node[circle, draw] {}
        child[]{ node[circle, draw, fill=blue!20](v) {$v$}
            child[draw, black]{node[circle,draw](a) {}
                {node[black, itria, scale=.4] (b1) {\phantom{Q1}}}}
            child[draw, black]{node[circle,draw](c) {}
                {node[black, itria, scale=.4] (b2) {\phantom{Q1}}}}
        }
        child[]{ node[circle, draw](w) {}}
    };
    
    \node[draw, circle, blue, thick, minimum size=3.3cm] at ($(a)!0.5!(c)$) {};
    
    \draw[thick, red!70!black, dashed](v)--(a);
    \node[red!70!black, font=\small] at ($(v)!0.5!(a) + (-0.425,0.25)$) {$x_{e_1}$?};
    
    \draw[thick, red!70!black, dashed](v)--(c);
    \node[red!70!black, font=\small] at ($(v)!0.5!(c) + (0.425,0.25)$) {$x_{e_2}$?};
    
    \node[rectangle, fill=green!30, draw=green!50!black, text=black, font=\small] 
        at ($(v) + (1.15,0.33)$) {$y_v$?};
    
    \node[fill=blue!10, text=black, font=\small, rounded corners] at (a) {$v_1$};
    \node[fill=blue!10, text=black, font=\small, rounded corners] at (c) {$v_2$};
    
    \node[font=\footnotesize, text=green!50!black] at ($(a) + (-0.6, 0.3)$) {$b_1$};
    \node[font=\footnotesize, text=green!50!black] at ($(c) + (0.6, 0.3)$) {$b_2$};
    
    \node[blue, font=\small] at ($(a)!0.5!(c) + (0, -1.8)$) {Subtree $T_v$};
\end{tikzpicture}

\caption{Decisions at an inner node $v$ with children $v_1$ and $v_2$.
    The DP must decide:
    (i)~whether to open a facility at $v$ ($y_v \in \{0,1\}$),
    (ii)~whether to upgrade edges $e_1$ and $e_2$ ($x_{e_1}, x_{e_2} \in \{0,1\}$), and
    (iii)~how to allocate the remaining budget between subtrees ($b_1 + b_2 \le b$).
    The blue circle indicates the subtree $T_v$ rooted at $v$.
    }
\label{fig:dp_decisions}
\end{figure}

\begin{enumerate}

	\item $\mathcal{A}(v,b)$: No coverage at $v$; no edges upgraded; full budget $b$ distributed.
	\begin{equation}
		\mathcal{A}(v,b) := \max_{b_1+b_2\leq b} \Bigl[ V(v_1, b_1, 0) + V(v_2, b_2, 0) \Bigr]
	\end{equation}
	
	\item $\mathcal{B}(v,b,r)$: $v$ is covered by a facility at $v$ or through the parent edge and can pass a remaining coverage of $r$ to the children.
	\begin{equation}
		\mathcal{B}(v,b,r) := \max_{\substack{b_1+b_2\leq b \\ x_{e_1},x_{e_2}\in\{0,1\}}}
		\Bigl[ V(v_1, b_1- c^E_{e_1} x_{e_1}, r x_{e_1}) + V(v_2, b_2- c^E_{e_2} x_{e_2}, r x_{e_2}) \Bigr]
	\end{equation}
	Depending on the edge upgrade decisions, $\mathcal{B}$ calls $V$ for the child trees with adjusted budget and with covering condition either $r$ (if the edge is upgraded) or $0$ (if not), since coverage can only flow through an upgraded edge.

	\item $\mathcal{C}^1(v,b,r)$: Left subtree provides coverage of $ \rho \geq r + \ell_{e_1} $ through $e_1$ into $v$ such that a remaining coverage of at least $r$ to the parent side or right subtree is available.
	\begin{equation}
		\mathcal{C}^1(v,b,r) :=
		\max_{\substack{b_1+b_2\leq b-c^E_{e_1} \\ \rho \in \mathcal{R}_{v_1},\ \rho \geq r + \ell_{e_1} \\ x_{e_2}\in\{0,1\}}}
		\Bigl[ V(v_1, b_1, -\rho) + V(v_2, b_2- c^E_{e_2} x_{e_2}, (\rho - \ell_{e_1}) x_{e_2}) \Bigr] \label{eq:C}
	\end{equation}
	The left edge is necessarily upgraded, so the maximum is taken only over the right edge upgrade decision as well as the budget and the child radius $\rho$ that the left subtree can provide.
	\item $\mathcal{C}^2(v,b,r)$: Symmetric to $\mathcal{C}^1$.
\end{enumerate}

\paragraph{Resulting recursions.}
The DP values follow by maximizing over the coverage situations, each mirroring the structure of~\eqref{eq:dp_highlevel} with $h_v$ as the local coverage and the helpers as the child contributions.
Depending on whether the parent radius $r$ is sufficient to cover $v$ or instead imposes a covering obligation, a different subset of helpers is active:
\begin{align}
\label{eq:dp_recursion}
V(v,b,r) \;=\; \begin{cases}
\begin{aligned}[t]
\max\bigl\{\; & h_v + \mathcal{B}(v, b-c^F_v, \bar{d}),\;
	h_v + \mathcal{B}(v, b, r-\ell_e), \\
& h_v + \mathcal{C}^1(v, b, 0),\;
	h_v + \mathcal{C}^2(v, b, 0) \;\bigr\}
\end{aligned} & \text{if } r \geq \ell_e, \\[10pt]
\begin{aligned}[t]
\max\bigl\{\; & \mathcal{A}(v, b),\;
	h_v + \mathcal{B}(v, b-c^F_v, \bar{d}), \\
& h_v + \mathcal{C}^1(v, b, 0),\;
	h_v + \mathcal{C}^2(v, b, 0) \;\bigr\}
\end{aligned} & \text{if } 0 \leq r < \ell_e, \\[10pt]
\begin{aligned}[t]
\max\bigl\{\; & h_v + \mathcal{B}(v, b-c^F_v, \bar{d}), \\
& h_v + \mathcal{C}^1(v, b, |r|),\;
	h_v + \mathcal{C}^2(v, b, |r|) \;\bigr\}
\end{aligned} & \text{if } r < 0.
\end{cases}
\end{align}
All cases share the first term, which corresponds to opening a facility at $v$.
Coverage from the children via $\mathcal{C}^1$ and $\mathcal{C}^2$ is available in all cases, but only for $r<0$ must it reach the parent side.
Not building a facility and not using child coverage is only possible for $r\geq 0$ and corresponds to the cases $\mathcal{B}(v, b, r-\ell_e)$ and $\mathcal{A}(v, b)$.

\paragraph{Complexity.}
Let $R := \max_{v \in N} |\mathcal{R}_v| \le \min(n, \bar{d}+1)$ denote the maximum number of relevant radii at any node.
The DP table for $V$ has $\mathcal{O}(n \cdot R \cdot \bar{b})$ entries,
and each entry requires $\mathcal{O}(\bar{b} \cdot R)$ time for enumerating budget allocations and radii in the helper functions.
The total time complexity is therefore
\[
\mathcal{O}(n \cdot R \cdot \bar{b}) \times \mathcal{O}(\bar{b} \cdot R)
= \mathcal{O}(n \cdot \bar{b}^2 \cdot R^2),
\]
which is bounded by $\mathcal{O}(n^3 \cdot \bar{b}^2)$ and $\mathcal{O}(n \cdot \bar{b}^2 \cdot \bar{d}^2)$, i.e., pseudo-polynomial.

\paragraph{Implementation}
In practice, the DP can further be improved by memoizing helper function results and pruning infeasible budget/radius combinations early.
A pseudocode summary of the tree DP is given in \suppref{app:pseudocode_tree}.
In Section \ref{sec:experiments}, we report computational results of our DP implementation, and show that it is competitive with an MILP solver especially for larger budget and coverage radius values.

\subsection{Block-cut tree decomposition}
\label{sec:bct}

We now extend the DP framework to graphs that are connected but not biconnected.

\subsubsection{From trees to graphs with articulation points}
\label{sec:bct_motivation}

We now generalize the tree DP from Section~\ref{sec:treedp} to graphs that contain \emph{articulation points} (APs), also known as \emph{cut vertices}: nodes whose removal disconnects the graph.
Trees are the extreme case: every internal node is an articulation point, and the per-block subproblems of this section reduce to the single-node subproblems of the tree DP in Section~\ref{sec:treedp}.
The block-level solutions are combined via the general DP recursion~\eqref{eq:dp_highlevel} on the block-cut tree defined below.

\paragraph{Block-cut tree structure.}
A graph $G = (N, E)$ can be decomposed into its \emph{biconnected components}, called \emph{blocks}:
maximal subgraphs that remain connected after removing any single node.
The blocks are connected to each other only through articulation points,
and this relationship forms a tree structure called the \emph{block-cut tree} (BC-tree),
which can be computed in linear time~\citep{hopcroftAlgorithm447Efficient1973}.

Formally, let $\mathcal{B}$ denote the set of blocks and $\mathcal{A}$ the set of articulation points.
The BC-tree $\mathcal{T}$ has vertex set $\mathcal{B} \cup \mathcal{A}$,
where block $B$ and AP $a$ are adjacent in $\mathcal{T}$ if and only if $a$ belongs to $B$.
We work directly on $\mathcal{T}$; however, DP subproblems are only necessary on the block nodes, since these include their incident articulation points.

We root $\mathcal{T}$ at an arbitrary block $B_r$, which induces parent-child relationships among blocks.
For a non-root block $B$, we denote by $a_B$ its \emph{parent AP} (the unique AP connecting $B$ to its parent block) and by $\mathcal{C}_B$ its set of \emph{child blocks}.
For a child block $C \in \mathcal{C}_B$, its parent AP $a_C$ is also called a \emph{child AP} of $B$, and $B \cap C  = \{a_C \}$.
For the demand accounting, each articulation point $a_C$ is assigned to the parent block of $C$; that is, $h_{a_C}$ is counted in the DP value of the parent (consistent with the sum over $N_B \setminus \{a_B\}$ in the block-level flow formulation).

\subsubsection{Border conditions at articulation points}
\label{sec:bct_border}

A facility in one block can cover nodes in another, but only through the connecting AP; the interaction therefore reduces to the coverage radius remaining at that AP.
The signed-radius DP of Section~\ref{sec:dp_framework} therefore instantiates on the BC-tree by taking blocks instead of single nodes as subproblems: the interface point of a block $B$ is its parent AP $a_B$, and the signed radius $r$ is exactly the coverage radius remaining at $a_B$.
Consequently, $V(B,b,r)$ is the maximum coverage achievable in the subtree of $\mathcal{T}$ rooted at $B$.

A block $B$ interacts with its neighbors through its parent AP $a_B$ and its child APs $\{a_C\}_{C \in \mathcal{C}_B}$.
Recall from Section~\ref{sec:dp_framework} that the \emph{parent radius} $r$ is a fixed parameter of the DP state, while each \emph{child selection} $\lambda^C = (b_C, r_C) \in \Lambda_C$ is a decision variable.
At $a_B$, $r$ is the covering condition imposed on $B$ by its parent block.
At each child AP $a_C$, $\lambda^C$ specifies the budget $b_C$ allocated to $C$ together with $r_C$, the covering condition at $a_C$ that simultaneously serves as $C$'s parent radius (in $V(C, b_C, r_C)$) and constrains $B$'s own coverage flow through $a_C$.
These selections are chosen jointly with the local decisions inside $B$.

The sign of the radius encodes a duality in its effect on $B$:
a positive parent radius ($r>0$) and a negative child radius ($r_C<0$) both represent free external coverage arriving at the respective AP (with remaining radius $r$ or $|r_C|$, respectively),
whereas a negative parent radius ($r<0$) and a positive child radius ($r_C>0$) both impose an obligation on $B$ to provide coverage outward through that AP.

\paragraph{Auxiliary-node interpretation.}
It is useful to represent each signed-radius border condition as an auxiliary node at the articulation point, following the duality above:
a condition providing free external coverage into $B$ (parent $r>0$, child $r_C<0$) is represented by a \emph{virtual facility} at distance $\bar{d}-r$ resp.\ $\bar{d}-|r_C|$ from the AP;
a condition obliging $B$ to cover outward (parent $r<0$, child $r_C>0$) by a \emph{virtual demand node} at distance $|r|$ resp.\ $r_C$, which $B$ must cover;
the BLANK conditions ($r=0$, $r_C=0$) add no node, decoupling the blocks at the AP.
In all cases, the node is placed so that the magnitude of the radius is exactly the coverage distance remaining at the AP.
Since a child radius is signed from $C$'s perspective (it is $C$'s parent radius), the same sign yields opposite node types at the parent and child APs.
This interpretation is illustrated in Figure~\ref{fig:bc_auxiliary_nodes}.
Adding the auxiliary nodes of the active border conditions to the block instance yields the \emph{auxiliary instance} of $B$; Table~\ref{tab:aux_nodes} lists the auxiliary node for each of the four non-BLANK conditions.
This construction directly guides the MILP extensions in Section~\ref{sec:ext_flowmip}.

\begin{table}[htbp]
\centering
\caption{The auxiliary instance: one added node with its connecting arc per non-BLANK border condition (the BLANK cases $r=0$, $r_C=0$ add nothing).
The parent nodes exist only in the DP states of matching sign, since the parent radius $r$ is a fixed parameter; the child nodes are present for every candidate state and activated by the child selection $\lambda^C = (b_C, r_C)$.
For the child OUT condition, a single node $d_C$ per child suffices, its arc carrying the selected radius as its length (Section~\ref{sec:ext_flowmip}).}
\label{tab:aux_nodes}
\small
\begin{adjustbox}{max width=\textwidth}
\begin{tabular}{lllll}
\toprule
Border condition & Auxiliary node & Connecting arc & Length & Exists / active \\
\midrule
parent OUT ($r > 0$)  & virtual facility $w_B$      & $(w_B, a_B)$      & $\bar{d} - r$         & parent states $r>0$ only; free to use \\
child INT ($r_C < 0$) & virtual facility $w_C[r_C]$ & $(w_C[r_C], a_C)$ & $\bar{d} - |r_C|$     & usable iff $\lambda^C$ selects radius $r_C$ \\
\midrule
child OUT ($r_C > 0$) & virtual demand node $d_C$   & $(a_C, d_C)$      & selected radius $r_C$ & covered iff $\lambda^C$ selects $r_C > 0$ \\
parent INT ($r < 0$)  & virtual demand node $d_B$   & $(a_B, d_B)$      & $|r|$                 & parent states $r<0$ only; must be covered \\
\bottomrule
\end{tabular}
\end{adjustbox}
\end{table}

\begin{figure}[ht]
	\centering
	\begin{minipage}[c]{0.55\textwidth}
		\centering
		\includegraphics[width=\linewidth]{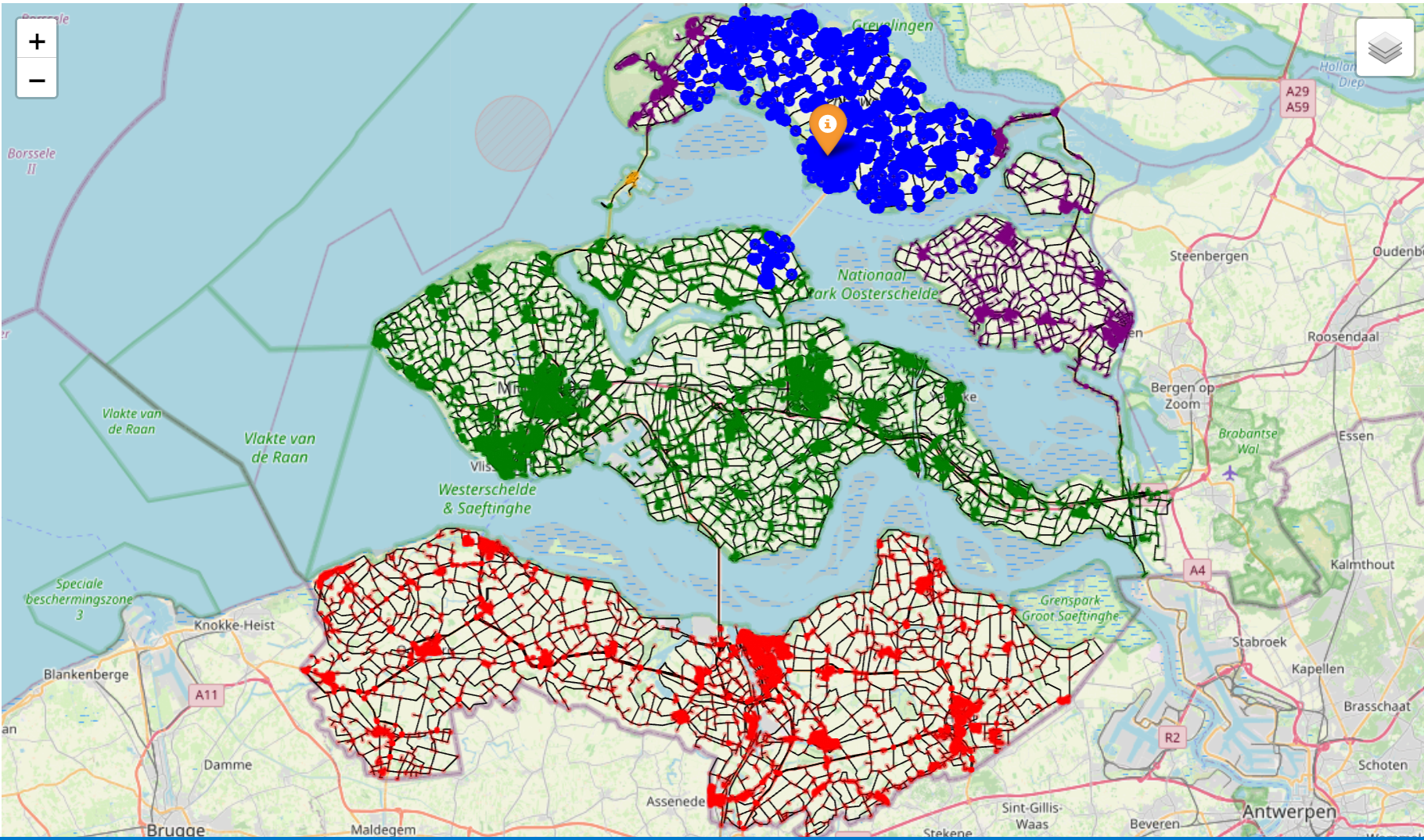}
	\end{minipage}%
	\hfill
	\begin{minipage}[c]{0.4\textwidth}
		\centering
		\adjustbox{max width=\linewidth}{%
		\begin{tikzpicture}[>=stealth, node distance=0.5cm, scale=0.85]
			
			\node[circle, draw, minimum size=1.8cm, inner sep=0pt,
			path picture={
				\node at (path picture bounding box.center){
					\includegraphics[width=2.1cm]{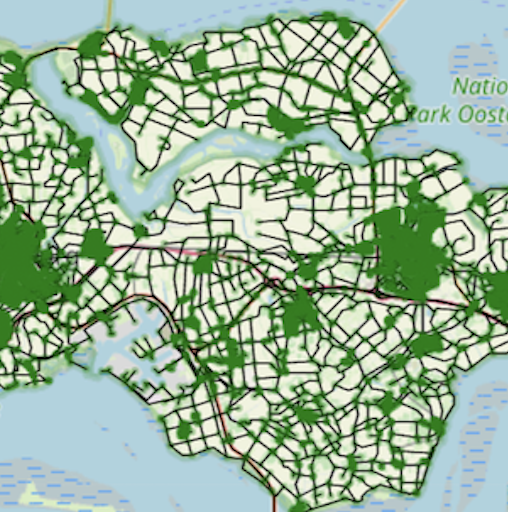}
				};
			}] (center) {};
			
			
			\node[circle, draw=green!40!black, fill=green!30!black, minimum size=0.15cm, inner sep=0pt] (ap_parent) at (center.north) {};
			\node[above=0.02cm of ap_parent, font=\tiny] {$\mathbf{a_B}$};
			
			\node[rectangle, draw=green!70!black, fill=green!60, minimum size=0.35cm,
			above left=0.8cm and 0.25cm of center] (out_parent) {};
			
			\draw[<-] (ap_parent) -- node[pos=0.5, left, font=\tiny] {$\bar{d}{-}r$} (out_parent);
			
			\node[above left=-0.5cm and 0.025cm of out_parent, font=\scriptsize] {$V(B,b,r),\ r>0$};
			
			\node[circle, draw=cyan!70!black, fill=cyan!40, minimum size=0.35cm,
			above right=0.8cm and 0.25cm of center] (int_parent) {};
			
			\draw[->] (ap_parent) -- node[pos=0.5, right, font=\tiny] {$|r|$} (int_parent);
			
			\node[above right=-0.5cm and 0.025cm of int_parent, font=\scriptsize] {$V(B,b,r),\ r<0$};
			
			\node[circle, draw=gray, dashed, fill=gray!20, minimum size=0.35cm,
			above=0.8cm of center] (blank_parent) {};
			
			\node[above=0.03cm of blank_parent, font=\scriptsize] {$V(B,b,0)$};
			
			
			\node[circle, draw=green!40!black, fill=green!30!black, minimum size=0.15cm, inner sep=0pt] (ap_child) at (center.south) {};
			\node[below=0.01cm of ap_child, font=\tiny] {$\mathbf{a_C}$};
			
			\node[circle, draw=red, fill=red!40, minimum size=0.25cm,
			below left=0.6cm and 0.25cm of center] (bleft1) {};
			\node[circle, draw=red, fill=red!65, minimum size=0.25cm,
			below=0.25cm of bleft1] (bleft2) {};
			\node[circle, draw=red, fill=red!90, minimum size=0.25cm,
			below=0.25cm of bleft2] (bleft3) {};
			
			\draw[->] (ap_child) -- (bleft1);
			\draw[->] (bleft1) -- (bleft2);
			\draw[->] (bleft2) -- (bleft3);
			
			\node[left=0.1cm of bleft3, font=\scriptsize, red] {$r_C>0$};

			\node[rectangle, draw=orange, fill=orange!80, minimum size=0.2cm,
			below right=0.75cm and 0.15cm of center] (bsq1) {};
			\node[rectangle, draw=orange, fill=orange!80, minimum size=0.2cm,
			below right=1.25cm and -0.1cm of center] (bsq2) {};
			\node[rectangle, draw=orange, fill=orange!80, minimum size=0.2cm,
			below right=1.8cm and -0.25cm of center] (bsq3) {};
			
			\draw[->] (bsq1) -- (ap_child);
			\draw[->] (bsq2) -- (ap_child);
			\draw[->] (bsq3) -- (ap_child);
			
			\node[right=0.1cm of bsq3, font=\scriptsize, orange] {$r_C<0$};
			
			\node[circle, draw=gray, dashed, fill=gray!20, minimum size=0.25cm]
			(blank_child) at ($(bleft3)!0.5!(bsq3)$) {};
			
			\node[below=0.05cm of blank_child, font=\scriptsize, gray] {$r_C=0$};

		\end{tikzpicture}}
	\end{minipage}
	\caption{Border conditions as auxiliary nodes.
		\emph{Left:} The Dutch province of Zeeland decomposed into three blocks; a facility in the northern block (orange pin, blue coverage area) reaches the central block through the AP, yielding a parent condition $r>0$ there.
		\emph{Right:} The auxiliary nodes at the parent AP $a_B$ (top) and a child AP $a_C$ (bottom), cf.\ Table~\ref{tab:aux_nodes}: a virtual facility for coverage into the block ($r>0$; $r_C<0$), a virtual demand node for an outward obligation ($r<0$; $r_C>0$), and no node for BLANK ($r=0$; $r_C=0$).
		The parent radius $r$ is fixed per DP state, whereas at each child AP the candidate radii $r_C \in \mathcal{R}_C$ appear as alternative nodes, of which the selection $\lambda^C$ activates one.
		Map data from OpenStreetMap, available under the Open Database License; see \url{https://www.openstreetmap.org/copyright}.
	}
	\label{fig:bc_auxiliary_nodes}
\end{figure}

\subsubsection{Computing block DP values via MILP}
\label{sec:bct_mip}

The DP recursion~\eqref{eq:dp_highlevel} requires, for each block $B$, optimizing over the local decisions inside $B$ jointly with the choice of budget and covering condition for every child block.
Since the number of child state combinations grows exponentially in the number of children, explicit enumeration and computation would be brute force and unlikely to perform well.
Instead, we formulate the recursion~\eqref{eq:dp_highlevel} as a MILP that incorporates the unified formulation~\eqref{eq:unified} for block $B$, extended with \emph{selector variables} that pick a precomputed child state per child block and with a parameterized feasibility set $\mathcal{F}$ encoding the border conditions at the articulation points.

\paragraph{Block-level MILP formulation.}
Let $N_B$, $E_B$, and $\overline{E}_B = E_B \cup \{\{s,j\} \mid j \in F_B\}$ denote the nodes, edges, and extended edges (including facility edges) of block $B$.
For each child $C \in \mathcal{C}_B$, we introduce \emph{selector variables} $\lambda^C_{b',r'} \in \{0,1\}$ indexed over the (pruned) state set $\Lambda_C \subseteq \{0,\ldots,\bar{b}\} \times \mathcal{R}_C$ from Section~\ref{sec:dp_framework}.
Setting $\lambda^C_{b',r'} = 1$ selects the precomputed DP value $V(C, b', r')$.

The block-level MILP for computing $V(B, b, r)$ is:
{
\begin{align}
	\max \quad & \sum_{k \in N_B \setminus \{a_B\}} h_k z_k \;+\;
	\sum_{C \in \mathcal{C}_B} \sum_{(b', r') \in \Lambda_C} \lambda^C_{b',r'} \cdot V(C, b', r')
	\label{eq:bct_mip_obj2} \\[2pt]
	\text{s.t.} \quad & \sum_{e \in \overline{E}_B} c^E_e x_e \;+\;
	\sum_{C \in \mathcal{C}_B} \sum_{(b', r') \in \Lambda_C} b' \cdot \lambda^C_{b',r'}
	\;\leq\; b \label{eq:bct_mip_budget2} \\
	& \sum_{(b', r') \in \Lambda_C} \lambda^C_{b',r'}
	\;=\; 1
	\hspace{4em} \forall\, C \in \mathcal{C}_B \label{eq:bct_mip_select2} \\
	& (x, z)
	\;\in\; \mathcal{F}(r, \{\lambda^C\}_{C \in \mathcal{C}_B})
	\label{eq:bct_mip_feas2} \\
	& x_e
	\;\in\; \{0,1\}
	\hspace{4em} \forall\, e \in \overline{E}_B,\\
	& z_k
	\;\in\; \{0,1\}
	\hspace{4em} \forall\, k \in N_B \\
	& \lambda^C_{b',r'}
	\;\in\; \{0,1\}
	\hspace{4em} \forall\, C \in \mathcal{C}_B,\, (b',r') \in \Lambda_C \label{eq:lambda_binary}
\end{align}
}

The objective \eqref{eq:bct_mip_obj2} maximizes coverage inside $B$ (excluding the parent AP $a_B$, whose demand is accounted for in the parent block) plus the precomputed DP values of the selected child states.
Constraint \eqref{eq:bct_mip_budget2} ensures the total budget (block plus children) does not exceed $b$.
Constraint \eqref{eq:bct_mip_select2} ensures exactly one state is selected per child block.
The feasibility set $\mathcal{F}(r, \{\lambda^C\})$ in \eqref{eq:bct_mip_feas2} extends~\eqref{eq:unified} by encoding the parent radius $r$ and the selected child radii at the articulation points; we describe its implementation in Section \ref{sec:ext_flowmip}.

\begin{remark}[Selector subproblem is small and fast to solve]
\label{par:child_alloc_assumption}
Fixing the local design to incumbent values $(\hat{x}, \hat{z})$ collapses the border-condition encoding into a feasibility filter on the child states, excluding those incompatible with the chosen design and typically leaving only a handful of candidates per child.
The residual problem in $\lambda$ is essentially
\begin{equation}
	\label{eq:selector_subproblem}
	\max_{\lambda} \;\Bigl( h_B(\hat{x}, \lambda) \;+\; \sum_{C \in \mathcal{C}_B} l(C, \lambda^C) \Bigr),
\end{equation}
where $h_B(\hat{x}, \lambda)$ is the local coverage of the incumbent at child allocation $\lambda$ and $l(C, \lambda^C)$ a lower bound on $V(C, \lambda^C)$ (Section~\ref{sec:propagating_bounds}).
This is a \emph{multiple-choice knapsack problem} (MCKP) \citep{martelloKnapsackProblemsAlgorithms1990} with at most $\bar{d}\cdot\bar{b}$ candidate states per child, hence small and fast to solve compared with the network-design part.
\end{remark}

Preliminary experiments support this: across a sample of block solves, re-solving the MCKP alone from the incumbent's $(\hat{x}, \hat{z})$ recovered that incumbent's $\lambda$ in well under a second whenever the block MILP had stopped at its time limit.
We adopt this as an assumption in our bound analysis (Section~\ref{sec:propagating_bounds}; details in \suppref{app:bound_propagation}).


\subsubsection{Extending the flow formulation}
\label{sec:ext_flowmip}

We realize the feasibility set $\mathcal{F}(r, \{\lambda^C\})$ in~\eqref{eq:bct_mip_feas2} by solving the block MILP on the auxiliary instance of Section~\ref{sec:bct_border} (Table~\ref{tab:aux_nodes}).
Since a node is covered if and only if a coverage path of length at most $\bar{d}$ connects it to an opened facility via built edges (Section~\ref{sec:flowmip}), the auxiliary nodes extend exactly the two sides of this characterization: virtual facilities are further origins of coverage paths, virtual demand nodes further destinations that must be reached.
The covering constraints therefore apply unchanged on the auxiliary instance; this holds for the flow formulation as for any other block-level formulation stated on the block instance.

What remains is to couple the auxiliary nodes to the DP states.
The parent condition is fixed per block MILP, so its node is simply present or absent; only the child choice needs encoding.
The MILP carries the auxiliary nodes of \emph{all} candidate child states $(b', r') \in \Lambda_C$, and linking constraints tie them to the selectors $\lambda^C_{b',r'}$: a virtual facility is usable only if its state is selected, and a virtual demand node must be covered exactly if an OUT state of its child is selected.
In the flow formulation, these linking constraints amount to a handful of extra flow variables and variable upper bounds.

Finally, the flow formulation spares us from constructing the auxiliary instance in its entirety.
Conceptually, each candidate OUT radius of a child contributes its own virtual demand node; since exactly one state is selected per child, they merge into the single node $d_C$, the length of its connecting arc becoming a linear expression in the selectors $\lambda^C$ that evaluates to the selected radius.
The block MILP therefore grows with the number of children rather than with the number of admissible child radii.
\suppref{app:flow_border} makes this construction precise and derives the resulting master MILP.

\subsubsection{Propagating bounds}
\label{sec:propagating_bounds}

Under time limits, not every block MILP is solved to optimality.
For each block~$B$ and state $(b, r)$ we therefore maintain bounds $l(B,b,r) \le V(B,b,r) \le u(B,b,r)$ with gap $\Delta(B,b,r) := u(B,b,r) - l(B,b,r)$, and propagate them up the block-cut tree through the recurrence~\eqref{eq:dp_highlevel}.

The absolute gap necessarily grows up the tree, since each block's bound aggregates those of its children and the subproblem itself grows with the subtree.
The gap \emph{relative} to the subproblem size, however, does not accumulate.
Assuming the selector subproblem~\eqref{eq:selector_subproblem} is solved exactly (Remark~\ref{par:child_alloc_assumption}), an inductive argument shows that if each block returns a primal design within factor $\alpha_B \in (0, 1]$ of its local optimum, the global approximation ratio is at least $\min_B \alpha_B$:
the relative error is dictated by the weakest local solve, independent of tree depth. Formal statements (Propositions~\ref{prop:additive_gap}, \ref{prop:relative_gap}, Corollary~\ref{cor:global_ratio}) and proofs are in \suppref{app:bound_propagation}.

\paragraph{Bounds from neighboring states.}
Monotonicity (Section~\ref{sec:dp_framework}) gives a second, cheap source of bounds. When only the BLANK state $V(B, b, 0)$ has been solved, the OUT state at radius $r > 0$ exceeds the BLANK by at most the demand reachable from the parent AP~$a_B$ within distance~$r$,
\begin{equation*}
	V(B, b, r) \;\le\; V(B, b, 0) + w^B(a_B, r),
\end{equation*}
where $w^B(a_B, r)$ denotes that demand weight. The INT bound for $r < 0$ is analogous and given in \suppref{app:bound_propagation}.

A pseudocode summary of the BC-tree DP, including the warm-starting, dominance pruning, and bound-propagation refinements introduced above, is given in \suppref{app:pseudocode_bct}.

\section{Computational experiments}
\label{sec:experiments}

We evaluate the proposed decomposition approach in two settings of increasing complexity.
First, on tree graphs where the DP solves the MCNDP exactly in pseudo-polynomial time (Section~\ref{sec:results_trees}),
we compare against the assignment-based MILP to validate the approach in the simplest setting.
Second, on general graphs with articulation points (Section~\ref{sec:results_bct}),
we investigate whether embedding a MILP formulation within the BC-tree DP framework improves performance
compared to solving the MILP on the full graph.
We focus on the flow formulation (Section~\ref{sec:flowmip}), solved both directly and within the DP.

\subsection{Experimental setup}
\label{sec:setup}

The experiments were performed in a Linux computing environment (64-bit).
The tree DP experiments ran on Intel Xeon Platinum 8360Y CPUs at 2.40\,GHz,
the BC-tree experiments on Intel Xeon Gold 6240 CPUs at 2.60\,GHz.
All runs are single-threaded (Gurobi threads=1) with a 16\,GB solver memory limit and a 3600-second time limit.
The algorithms are implemented in Python (3.11 for the tree DP, 3.13 for the BC-tree experiments)
using NetworkX~3.5 for graph algorithms
and Gurobi~12.0 as the MILP solver~\citep{gurobi}.
The resources and services used in this work were provided by the VSC (Flemish Supercomputer Center),
funded by the Research Foundation Flanders (FWO) and the Flemish Government.

\subsection{Instances}
\label{sec:instances}

There are no standard benchmark instances for network design problems on graphs with low connectivity.
We describe the instance classes for the tree DP and BC-tree DP experiments separately.

\paragraph{Tree instances.}
For the tree DP experiments, we sample trees on $n$ nodes uniformly at random among all labeled trees with the \texttt{random\_labeled\_tree} function of NetworkX~\citep{SciPyProceedings_11}.
Edge distances and upgrade costs are drawn independently from $\{1, \ldots, 10\}$,
facility opening costs from $\{3, 6, \ldots, 30\}$,
and demand weights from $\{1, \ldots, 10\}$.

\paragraph{BC-tree instances.}
The Sevilla network~\cite{garcia-archillaGRASPAlgorithmsRobust2013} (49 nodes, 119 edges)
and Sioux Falls network~\citep{leblancAlgorithmDiscreteNetwork1975} (24 nodes, 76 edges)
are standard transportation benchmarks but are biconnected,
so the BC-tree decomposition yields a single block with no computational benefit.
To obtain instances with controlled block-cut tree structure,
we construct synthetic graphs by merging biconnected building blocks:
\begin{enumerate}
	\item \textbf{Select base blocks:}
	full copies or connected subgraphs of Sioux Falls,
	Sevilla, and grid graphs ($4{\times}4$, $5{\times}5$, $6{\times}6$).

	\item \textbf{Sample a tree topology $\Theta$ on $k$ blocks}
	(star, path, balanced tree, or random tree),
	where each node represents a block taken from base blocks and each edge indicates a shared articulation point.

	\item \textbf{Merge via node identification:}
	for each edge in $\Theta$,
	identify (i.e.,~shrink) one randomly chosen node from each adjacent block, creating an articulation point.
\end{enumerate}
Table~\ref{tab:instances} in \suppref{sec:appendix_instances} summarizes the resulting 34 instances
(51--337 nodes, 76--833 edges, 2--34 blocks).
Figure~\ref{fig:instance_examples} illustrates a representative instance.

\begin{figure}[htbp]
	\centering
	\includegraphics[width=0.95\textwidth]{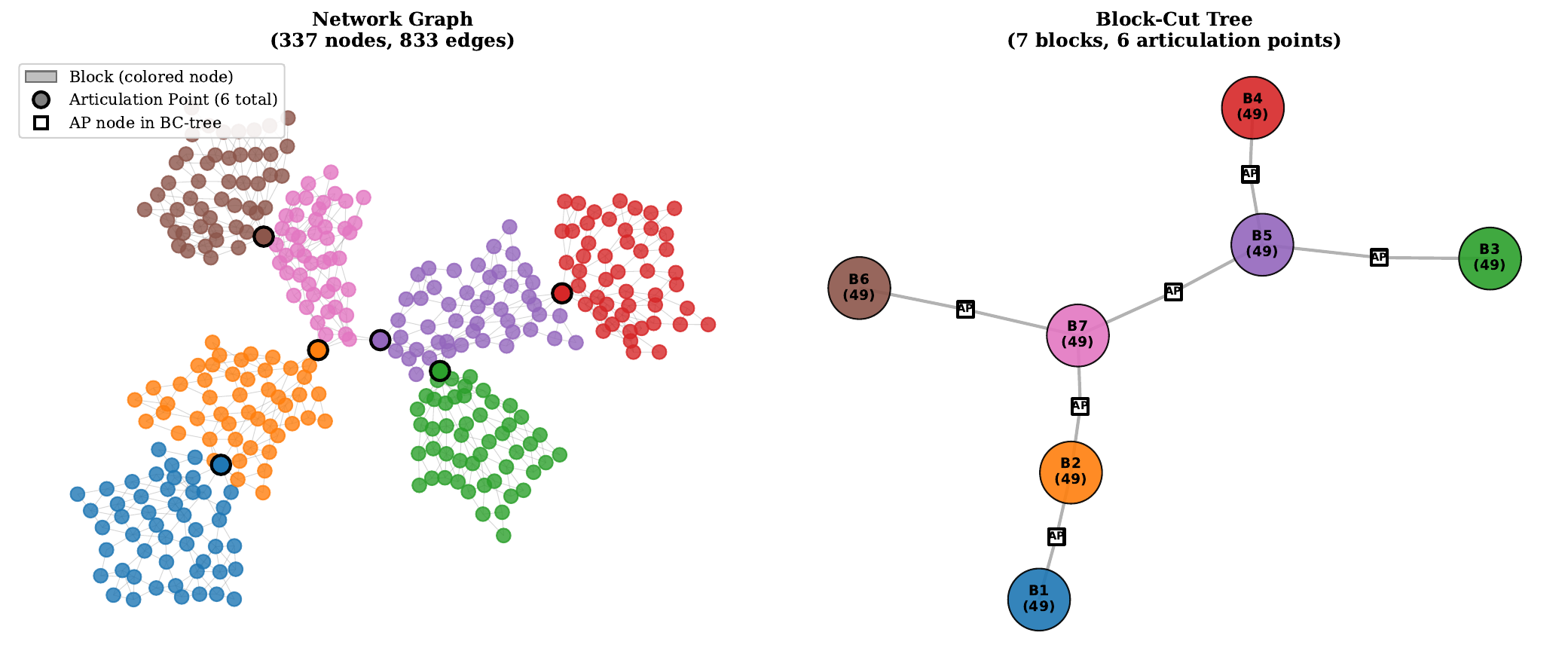}
	\caption{Example BC-tree instance with balanced tree topology: 7 blocks of 49 nodes each, connected in a path-like structure (Max AP$^\circ = 2$).
		The network graph (left) and block-cut tree structure (right) are shown, with block nodes colored consistently.}
	\label{fig:instance_examples}
\end{figure}

\paragraph{Instance parameter generation.}
Edge distances are uniform (all equal to~1).
Demand weights are drawn from $\{0, 1, 2, 3, 4\}$ with a fixed seed.
Facility opening costs are drawn from $\{5, 10, 15\}$,
and edge upgrade costs from $\{1, 2, 3, 4, 5\}$.
The budget $\bar{b} = \beta \cdot b_{\mathrm{ref}}$ is a fraction of a reference budget estimated to cover the entire graph,
with budget factor $\beta \in \{0.3, 0.5, 0.7\}$.
The coverage radius is
\begin{equation}\label{eq:coverage_radius}
	\bar{d} = \lfloor \gamma \cdot m \cdot \bar{\ell} \rfloor,
\end{equation}
where $m$ is the number of edges,
$\bar{\ell}$ the average edge length,
and $\gamma \in \{0.05, 0.10, 0.15\}$ the coverage radius factor.
Intuitively, this is the fraction of the network's total edge length
that a single facility-to-demand path may traverse.
This yields coverage radii ranging from 3 to 124 across instances.

\subsection{Results on trees}
\label{sec:results_trees}

We evaluate the tree DP algorithm from Section~\ref{sec:treedp} against solving the MILP formulation directly with Gurobi.
For each $(\bar{b}, \bar{d}) \in \{10, 20, \ldots, 60\}^2$ we generate 9 random 20-node trees (324 test cases), and additionally study scaling at $n \in \{20, 40, \ldots, 160\}$ for three fixed parameter settings, with eight trees per size (192 further test cases).
Both algorithms are exact, and on all 516 test cases they return the same objective value, which confirms the correctness of the DP implementation.

\begin{figure}[htbp]
	\centering
	\begin{subfigure}[b]{0.48\textwidth}
		\centering
		\includegraphics[width=\textwidth]{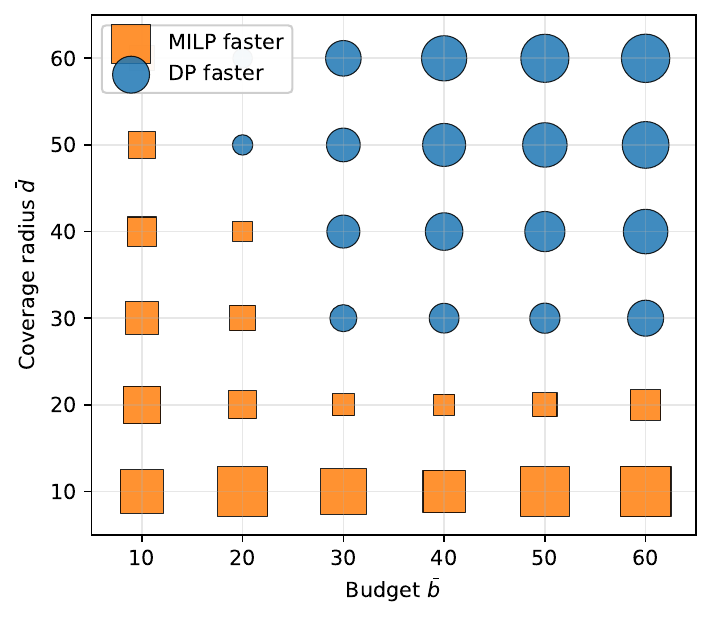}
		\caption{Faster algorithm by $(\bar{b}, \bar{d})$ on 20-node trees; marker area scales with $|\log_2(t_{\mathrm{MILP}}/t_{\mathrm{DP}})|$.}
		\label{fig:tree_dp_faster}
	\end{subfigure}
	\hfill
	\begin{subfigure}[b]{0.48\textwidth}
		\centering
		\includegraphics[width=\textwidth]{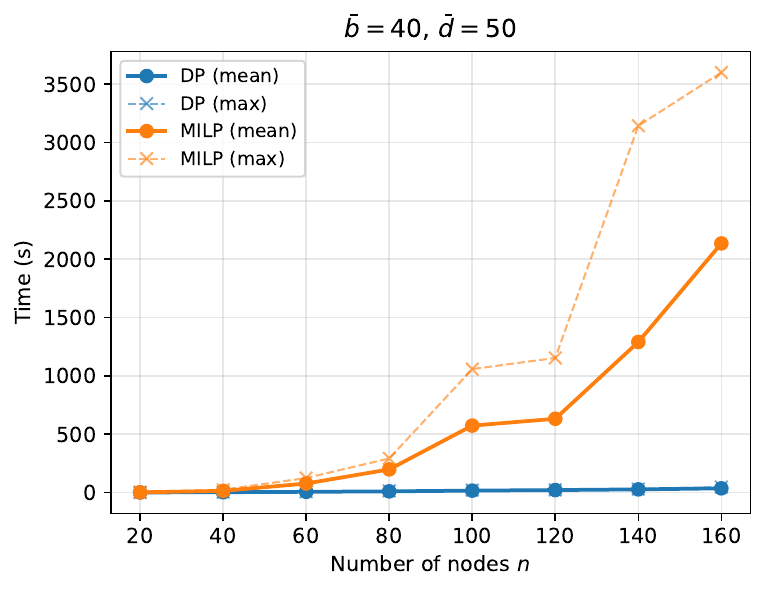}
		\caption{Scaling at $\bar{b}=40,\bar{d}=50$: at $n = 160$ the MILP averages over 2100\,s, with one of the eight instances unsolved at the 3600\,s limit, while the DP completes in about 36\,s (max 46\,s).}
		\label{fig:tree_dp_n_high}
	\end{subfigure}
	\caption{Tree DP vs.\ direct MILP. The DP dominates once both $\bar{b}\geq 30$ and $\bar{d}\geq 40$ (left), and the advantage widens with instance size (right).}
	\label{fig:tree_dp}
\end{figure}

For moderate parameters the MILP is faster, though both algorithms finish quickly there: over the eighteen parameter combinations that the MILP wins, it averages $0.11$\,s against the DP's $0.21$\,s, and no solve exceeds $1.5$\,s.
As $\bar{b}$ and $\bar{d}$ grow, the DP's pseudo-polynomial $\mathcal{O}(n \bar{b}^2 \bar{d}^2)$ runtime becomes competitive: once $\bar{b} \geq 30$ and $\bar{d} \geq 40$, the DP is consistently faster (Figure~\ref{fig:tree_dp_faster}), by a median factor of $3.0$.
The margin widens with instance size (Figure~\ref{fig:tree_dp_n_high}): at $\bar{b}=40,\, \bar{d}=50$ it reaches a median factor of $66$ at $n=160$, where one MILP run hit the $3600$\,s limit unsolved on a tree that the DP had settled in $46$\,s.
The MILP also exhibits high variance across instances, with a coefficient of variation of $0.55$ over the nine trees of a parameter combination against the DP's $0.21$.
This predictability is a practical advantage beyond raw speed.
Detailed sensitivity at $n=20$ and the two lower-parameter scaling settings ($\bar{b}=20,\, \bar{d}=40$ and $\bar{b}=30,\, \bar{d}=30$) appear in \extraref{sec:additional_tree_results}.
These findings motivate the extension to graphs with low connectivity via the block-cut tree decomposition (Section~\ref{sec:bct}), where the DP is applied recursively to biconnected components.

\subsection{Results on graphs with articulation points}
\label{sec:results_bct}

We evaluate the BC-tree DP framework on graphs with articulation points, where it coordinates MILP solves on individual blocks, instead of solving a single MILP on the full graph.

\subsubsection{Implementation choices}
\label{sec:bct_prelim}

Several implementation choices affect the performance of the BC-tree DP framework.
Compared with solving only the direct MILP, the decomposition offers more opportunities
for performance improvements through careful design.
We summarize the key decisions based on preliminary experiments
and refer to \suppref{app:implementation} for details.

\paragraph{Warm-starting and state ordering.}
Each block MILP is initialized with a greedy heuristic solution.
We further exploit monotonicity across neighboring DP states: 
a solution for state $V(B,b, r)$ provides a valid warm start for $V(B,b+1, r)$ and $V(B,b, r+1)$.
States are processed with budget increasing in the outer loop and radius in the inner loop.

\paragraph{Structural choices.}
The BC-tree is rooted at the largest block, which is solved only for the BLANK condition with full budget $\bar{b}$.
Adjacent blocks with at most three vertices are merged to reduce DP overhead.
We also filter clearly infeasible budget-distance combinations using combinatorial properties of edge costs and path lengths.

\paragraph{Time allocation.}
When a total time limit is imposed, time is distributed across blocks proportionally to edge count,
then allocated adaptively across DP states within each block so that unused time from faster solves accumulates for harder states.
States receiving insufficient time are skipped, with bounds constructed from dominating states and heuristics (Section~\ref{sec:propagating_bounds}).

\subsubsection{Computational comparison}
\label{sec:bct_comparison}

We evaluate whether the BC-tree decomposition yields computational benefits compared to solving a single MILP on the full graph.
We use the 34 multi-block instances from Table~\ref{tab:instances},
each solved with three budget factors ($\beta \in \{0.3, 0.5, 0.7\}$) and three coverage radius factors ($\gamma \in \{0.05, 0.10, 0.15\}$),
yielding $34 \times 3 \times 3 = 306$ test cases.
All algorithms use Gurobi 12.0.3 with a one-hour time limit.

\paragraph{Algorithm selection via performance profiles.}
To select the MILP formulation presented here, we compared two MILP formulations for the MCNDP subproblems: the flow-based model (Section~\ref{sec:flowmip}) and an alternative based on length-bounded cuts \citep{arslanFlexibleNaturalFormulation2019}.

We compare four configurations: the direct MILP with warm-starting using either formulation (FlowMIP, LBCutMIP), and the BC-tree DP with either formulation (DP-FlowMIP, DP-LBCutMIP).
We use performance profiles \citep{dolanBenchmarkingOptimizationSoftware2002} on solution quality to compare the configurations: for each algorithm, the profile reports the fraction of instances solved within a factor~$\tau$ of the best objective found by any algorithm.
Figure~\ref{fig:performance_profile} shows the result.

\begin{figure}[htbp]
	\centering
	\includegraphics[width=0.48\textwidth]{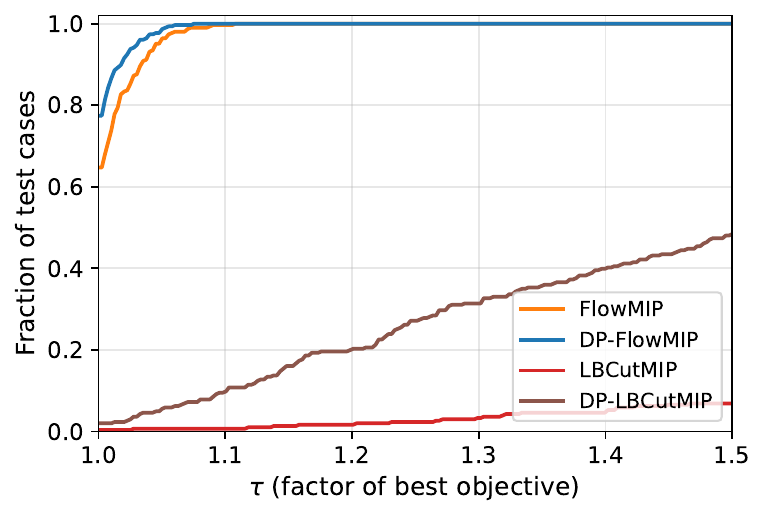}
	\caption{Solution quality performance profile.
		At $\tau = 1$: DP-FlowMIP achieves best in 75.2\%, FlowMIP
		in 62.7\%.}
	\label{fig:performance_profile}
\end{figure}

FlowMIP and DP-FlowMIP clearly dominate and will be the focus in the following: at $\tau = 1.05$, both solve nearly all instances well (96.4\% and 98.7\%).
The length-bounded cut formulation is clearly outperformed by the flow formulation; nonetheless, embedding it in the BC-tree DP (DP-LBCutMIP) still improves over the direct LBCutMIP, confirming the value of the decomposition.

\paragraph{Overall performance.}
Across all 306 test cases, the DP finds better solutions in 98 cases (32.0\%), the MILP in 57 (18.6\%), with 151 equivalent (49.3\%).
The mean objective difference is 0.43\% in favor of the DP.

In solution quality, the DP shows robustness: when it is worse than FlowMIP, it remains within 2\% in most instances,
exceeding this threshold in only 26 cases compared to 51 for FlowMIP (see Figure~\ref{fig:bct_overview}(a)).
FlowMIP solves 26.1\% of instances to optimality versus 17.6\% for DP-FlowMIP
(see also the time-based performance profile in \extraref{sec:additional_bct_results}).
In the 54 cases where both are optimal, FlowMIP is faster on average (307 seconds versus 521 seconds for the DP).
However, in the 252 instances not solved to optimality, the DP uses an average of 2983 seconds,
leaving approximately 617 seconds of the 3600-second limit unused.

We compared performance across several metrics (BC-tree topology, largest block size, network size) but found no strong differentiating factors; the DP's solution quality advantage is broadly consistent, suggesting it stems from warm-starting and budget allocation rather than specific structural properties.
A more nuanced picture emerges from the sensitivity to budget and coverage radius parameters, discussed next.

\paragraph{Sensitivity to budget and coverage radius.}
\begin{table}[htbp]
	\centering
	\small
	\begin{tabular}{crrrr}
		\toprule
		$\beta$ & DP wins & FlowMIP wins & Equiv 
		  & Mean DP adv.\ (\%) \\
		\midrule
		0.3 & 26 & 28 & 48 & $-0.13$ \\
		0.5 & 35 & 15 & 52 & $+0.82$ \\
		0.7 & 37 & 14 & 51 & $+0.59$ \\
		\midrule
		$\gamma$ & DP wins & FlowMIP wins & Equiv
		  & Mean DP adv.\ (\%) \\
		\midrule
		0.05 & 30 & 26 & 46 & $+0.03$ \\
		0.10 & 36 & 16 & 50 & $+0.68$ \\
		0.15 & 32 & 15 & 55 & $+0.58$ \\
		\bottomrule
	\end{tabular}
	\caption{Algorithm comparison by budget factor $\beta$ (top) and 
		coverage radius factor $\gamma$ (bottom).
		Larger values of both parameters favor the BC-tree DP.}
	\label{tab:sensitivity_params}
\end{table}

At tight budget ($\beta = 0.3$), the MILP and DP are roughly tied, while at $\beta \geq 0.5$ the DP shows a consistent advantage of ${\sim}0.7\%$ (Table~\ref{tab:sensitivity_params}).
Similarly, higher coverage radius favors the DP (32 vs.\ 15 wins at $\gamma = 0.15$), consistent with the tree results from Section~\ref{sec:results_trees}:
longer radii mean more paths through articulation points, making border condition aggregation more effective.
Figure~\ref{fig:bct_overview}(b) shows the same pattern as a heatmap over $\beta \times \gamma$.

\begin{figure}[htbp]
	\centering
	\begin{subfigure}[b]{0.48\textwidth}
		\centering
		\includegraphics[width=\textwidth]{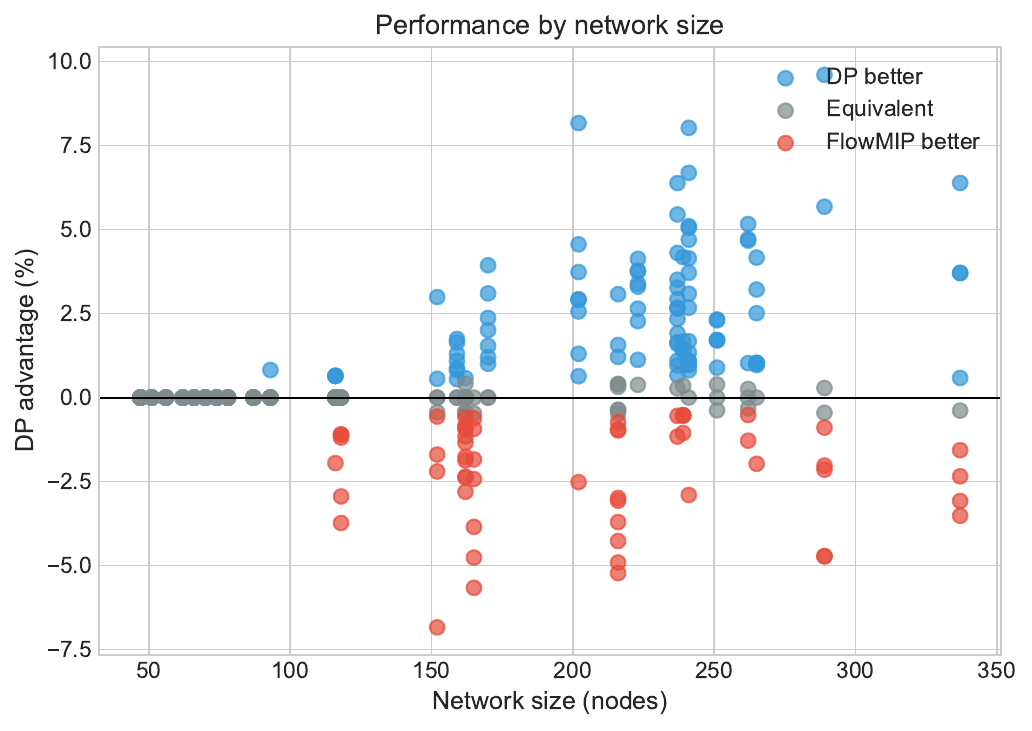}
	\end{subfigure}
	\hfill
	\begin{subfigure}[b]{0.48\textwidth}
		\centering
		\includegraphics[width=\textwidth]{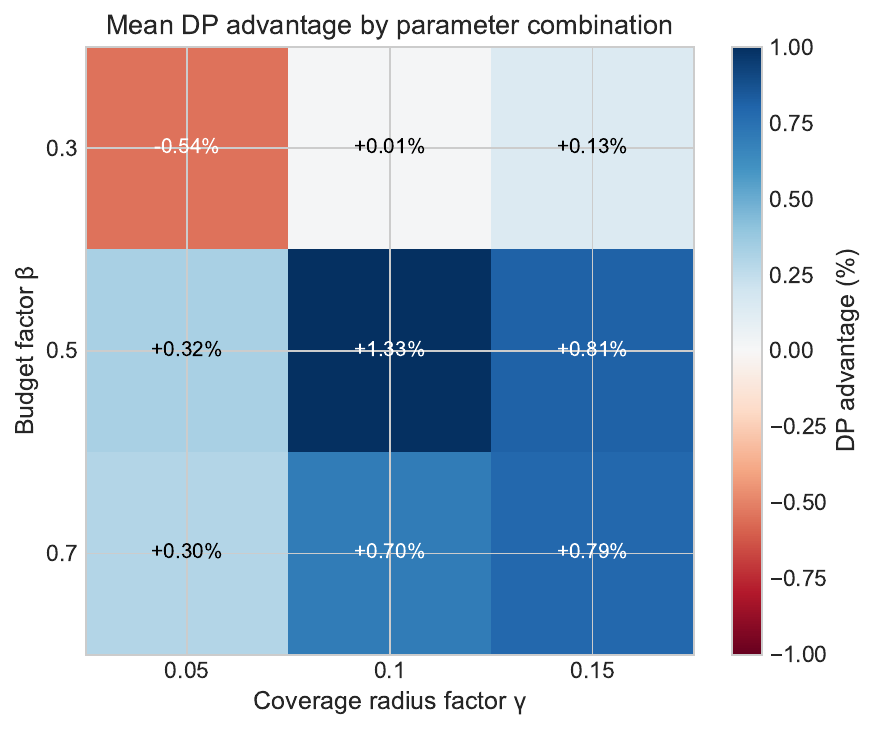}
	\end{subfigure}
	\caption{(a) DP advantage over FlowMIP by network size: red indicates DP advantage $>$0.5\%, blue FlowMIP advantage, gray equivalence.
		(b) Mean DP advantage by budget factor and coverage radius factor.}
	\label{fig:bct_overview}
\end{figure}

\paragraph{Practical solve scenarios.}
Two settings common in practice highlight the BC-tree DP's structural advantages.
We evaluate both on the five largest instances that did not reach optimality within one hour, since these are the cases where solver choice actually matters; full experimental details are reported in \suppref{sec:appendix_practical_solve}.

First, decision-makers often need a full coverage-versus-budget curve rather than a solution for a single budget value.
Because the BC-tree DP enumerates budget states intrinsically, all curve points emerge from a single solve, so each budget value benefits from the full time budget.
FlowMIP, by contrast, must be re-solved for each budget level, and the available time has to be split across points.
We compare three time-allocation strategies that distribute time proportionally to the logarithm (\emph{FlowMIP-LogTime}), linear value (\emph{FlowMIP-LinTime}), or square (\emph{FlowMIP-QuadTime}) of the budget.
Figure~\ref{fig:budget_and_time_sensitivity}(a) shows the resulting curves at $\gamma = 0.10$: the BC-tree DP dominates the FlowMIP variants at every budget value, with QuadTime degrading sharply at small budgets, where the time allotted per point is exhausted before a useful incumbent is found.

Due to its advantage on small budgets, FlowMIP-LogTime is the most competitive allocation strategy, yet is still outperformed by the DP by 1.9\% on average across all budget levels.
Restricting the analysis to the practically more relevant upper half of the budget range, a more consistent advantage emerges: the DP achieves mean relative gains of 4.5--6.7\% against all three strategies (6.7\% vs.\ LogTime, 4.5\% vs.\ LinTime, 5.4\% vs.\ QuadTime).

Second, decision-making scenarios differ widely in how much computation time they allow, from minutes for time-sensitive choices to hours for long-term planning.
We therefore evaluate solution quality across a range of total time budgets, rather than only at a single time limit.
Figure~\ref{fig:budget_and_time_sensitivity}(b) reports this \emph{time-limit sensitivity} at the default budget.
The BC-tree DP delivers better solutions than FlowMIP across nearly the entire range, with one narrow sweet spot at very small total runtimes: there, the MILP subproblems are small enough that FlowMIP finds a decent first incumbent before the DP has amortized its state-enumeration overhead.
Beyond roughly ten minutes, the DP overtakes and the quality gap widens steadily.

\begin{figure}[htbp]
	\centering
	\begin{subfigure}[b]{0.48\textwidth}
		\centering
		\includegraphics[width=\textwidth]{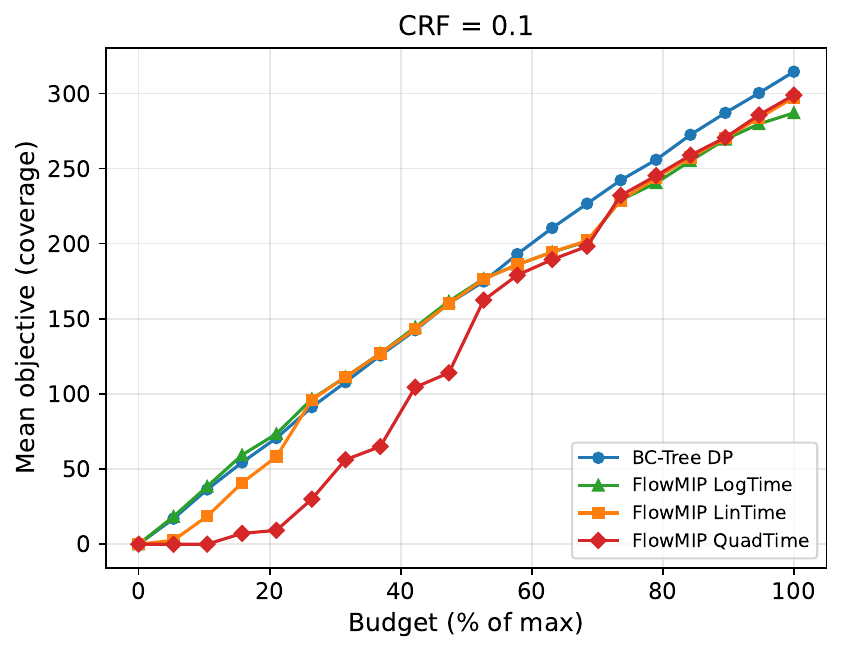}
		\caption{Coverage--budget curve at $\gamma = 0.10$.}
		\label{fig:budget_curve}
	\end{subfigure}
	\hfill
	\begin{subfigure}[b]{0.48\textwidth}
		\centering
		\includegraphics[width=\textwidth]{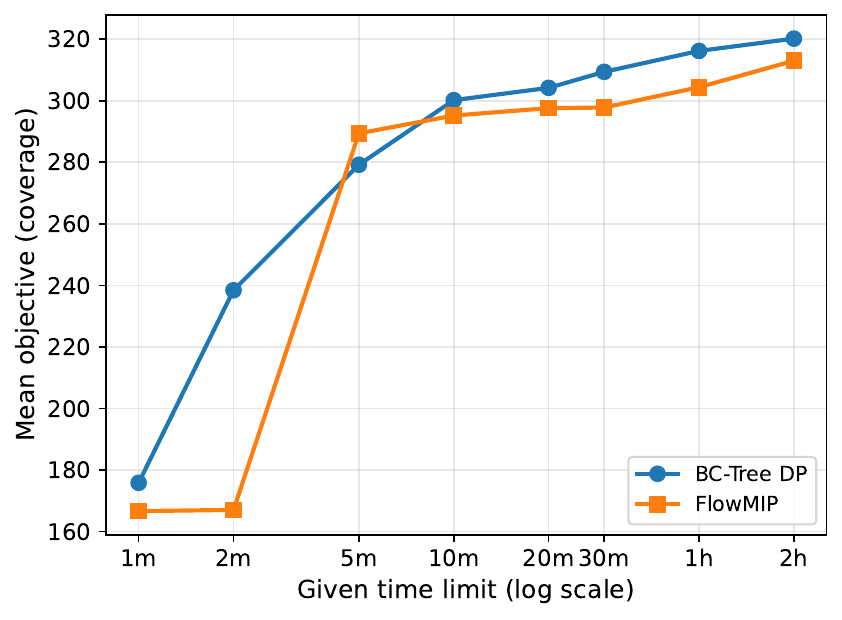}
		\caption{Time-limit sensitivity at the default budget.}
		\label{fig:time_sensitivity}
	\end{subfigure}
	\caption{Two practical solve scenarios on the five largest instances not solved to optimality within one hour.
		The BC-tree DP enumerates all budgets within a single solve, whereas the FlowMIP variants split the time limit across budget points by a log-, linear-, or quadratic-in-budget allocation.}
	\label{fig:budget_and_time_sensitivity}
\end{figure}


\section{Conclusions}
\label{sec:conclusions}

We introduced a decomposition framework for the Maximum Covering Network Design Problem on graphs with low connectivity, centered on a block-cut tree dynamic program that exploits articulation points to decompose the problem into independent subproblems on biconnected components.

For tree graphs the resulting pseudo-polynomial DP solves the MCNDP exactly and outperforms direct MILP formulations on larger budgets and coverage radii, with the added advantage of consistent runtimes without outliers.
For graphs with articulation points, the BC-tree DP coordinates MILP solves on the individual blocks and matches or outperforms direct MILP on over 80\% of our test instances, with an advantage that grows with the budget and coverage radius factors;
where it does fall behind, it typically stays within 2\% of the direct MILP, exceeding that threshold in only 26 cases versus 51 for FlowMIP.
It is particularly well suited to two practical settings, producing full coverage--budget curves at essentially no extra cost over a single solve and dominating FlowMIP across a wide range of total time limits.
Its key practical strength is reducing problem sizes through decomposition, and further improvements in bounding and state iteration could yield larger advantages.

\paragraph{Limitations.}
On 2-connected graphs without articulation points the BC-tree method reduces to a single-block solve, equivalent to direct MILP with neither advantage nor disadvantage. On less connected instances the per-instance gain may be modest, but the computational study shows that the BC-tree DP is preferable in the majority of cases.

\paragraph{Future directions.}
The framework extends along two axes.
First, it is not specific to the MCNDP: the same selector-and-border-condition scheme applies to other location and network design problems in which the interaction across an articulation point reduces to a facility's remaining reach and a budget allocation, such as uncapacitated facility location and its network design variant (the UFLNDP; Section~\ref{sec:lit_review}).
Second, it extends to richer graph structure: SPQR-tree decompositions would lift it to 2-connected graphs, and standard graph partitioning could handle larger separators~\citep{dellingGraphPartitioningNatural2011}.
Algorithmically, block subproblems are naturally multi-objective in budget, coverage, and border conditions, suggesting connections to Lagrangian relaxation~\citep{larssonLagrangianBoundingHeuristic2024} or resource-directed decomposition~\citep{bodurDecompositionLooselyCoupled2022,yildizDecompositionBranchingMixed2022}.
Moreover, a Benders-like scheme could compute child states on demand, and sibling-block independence makes parallelization straightforward, yielding significant performance improvement potential on modern multi-core hardware.
Finally, an instance-space analysis~\citep{smith-milesInstanceSpaceAnalysis2023} could characterize when BC-tree decomposition pays off in practice and guide method selection.

	\paragraph*{Code and data availability.}
	The implementation, the instance generator and instance files, and the per-case result data
	referenced in the appendix are available in the authors' public GitHub repository.

	\paragraph*{Acknowledgments.}
	This work was supported by the special research fund of KU Leuven (project C14/22/026).
	The resources and services used in this work were provided by the VSC (Flemish Supercomputer Center), funded by the Research Foundation - Flanders (FWO) and the Flemish Government.

	\begingroup
	\sloppy
	\bibliographystyle{informs2014}
	\bibliography{ResilientMaxCovering}
	\endgroup

\appendix
\numberwithin{figure}{section}
\numberwithin{table}{section}

\section{Notation reference}
\label{sec:appendix_notation}

Table~\ref{tab:notation_full} provides a comprehensive overview of notation used in the paper.

\begin{table}[H]
	\centering
	\footnotesize
	\renewcommand{\arraystretch}{0.88}
	\caption{Complete notation reference.}
	\label{tab:notation_full}
	\makebox[\textwidth][c]{%
	\begin{tabular}[t]{ll}
		\toprule
		Symbol & Meaning \\
		\midrule
		\multicolumn{2}{l}{\textbf{Graphs and sets}}\\
		$G=(N,E)$ & Undirected road network. \\
		$N$ & Demand nodes and candidate sites. \\
		$F$ & Facility sites, $F\subseteq N$. \\
		\midrule
		\multicolumn{2}{l}{\textbf{Parameters (nonneg.\ integers or $+\infty$)}}\\
		$\bar{b}$ & Investment budget. \\
		$\bar{d}$ & Coverage radius / length bound. \\
		$h_k$ & Demand weight at node $k\in N$. \\
		$\ell_e$ & Length of edge $e\in E$ ($0$ for facility edges). \\
		$c^E_e$ & Upgrade cost for $e\in E$ ($0$ for existing edges). \\
		$c^F_j$ & Facility opening cost for $j\in F$. \\
		\midrule
		\multicolumn{2}{l}{\textbf{Decision variables}}\\
		$z_k$ & Demand node $k$ is covered. \\
		$x_e$ & Build/upgrade edge $e\in \overline{E}$. \\
		$y_j$ & Open facility at $j\in F$ ($\equiv x_{\{s,j\}}$). \\
		$f^k_a$ & Commodity-$k$ flow on arc $a\in\overline{A}$. \\
		\bottomrule
	\end{tabular}
	\quad
	\begin{tabular}[t]{ll}
		\toprule
		Symbol & Meaning \\
		\midrule
		\multicolumn{2}{l}{\textbf{Extended graph (super-source construction)}}\\
		$\overline{G}$ & Extended graph with source $s$, facility edges. \\
		$\overline{N}$ & Extended node set, $\overline{N}=N\cup\{s\}$. \\
		$E^F$ & Facility edges, $E^F=\{\{s,j\}\mid j\in F\}$. \\
		$\overline{E}$ & Extended undirected edges, $\overline{E}=E\cup E^F$. \\
		$\overline{A}$ & Bidirected arcs of $\overline{E}$. \\
		\midrule
		\multicolumn{2}{l}{\textbf{Block-cut tree notation}}\\
		$\mathcal{B}$ & Set of blocks (biconnected components). \\
		$\mathcal{A}$ & Set of articulation points (APs). \\
		$N_B, E_B$ & Node set and edge set of block $B$. \\
		$a_C$ & Parent AP of child block $C$. \\
		$\mathcal{C}_B$ & Set of child blocks of block $B$. \\
		$T_B$ & Subtree rooted at block $B$. \\
		$\mathcal{R}_B$ & Set of relevant radii for block $B$. \\
		\midrule
		\multicolumn{2}{l}{\textbf{DP states and selectors}}\\
		$V(B,b,r)$ & DP value at budget $b$, signed parent radius $r$. \\
		$\lambda^C_{b',r'}$ & Select child state $(b',r') \in \Lambda_C$. \\
		\bottomrule
	\end{tabular}}

	\vspace{0.6ex}
	{\footnotesize The signed radius $r\in\{-\bar{d},\dots,\bar{d}\}$ encodes the parent
	interaction: $r=0$ no interaction (decoupled), $r>0$ external coverage entering at
	remaining radius $r$, $r<0$ an obligation to cover the parent side within distance
	$\bar{d}-|r|$.}
\end{table}


\newcommand{\milpsetup}{%
	\footnotesize
	\allowdisplaybreaks
	\setlength{\jot}{5pt}%
	\setlength{\abovedisplayskip}{6pt}%
	\setlength{\belowdisplayskip}{6pt}%
	\setlength{\abovedisplayshortskip}{3pt}%
	\setlength{\belowdisplayshortskip}{3pt}%
}

\section{Block-level flow formulation with border conditions}
\label{app:flow_border}

This appendix provides the complete block-level flow formulation for computing the DP values $V(B,b,r)$ under the signed-radius border conditions of Section~\ref{sec:dp_framework}.
We consider block $B$ with node set $N_B$, edge set $E_B$, facility set $F_B$, parent AP $a_B$, and child blocks $\mathcal{C}_B$ with child APs $a_C$ for $C \in \mathcal{C}_B$.
Following Section~\ref{sec:flowmip}, the source-node construction is applied to each block individually:
the extended graph $\overline{G}_B$ has node set $\overline{N}_B := N_B \cup \{s\}$ with source node $s$, edge set $\overline{E}_B := E_B \cup \{\{s,j\} : j \in F_B\}$ with $\ell_{\{s,j\}} := 0$ and $c^E_{\{s,j\}} := c^F_j$, and arc set $\overline{A}_B$ containing both directions $(u,v),(v,u)$ of each edge, each inheriting the length of its edge.
Facility openings are the edge decisions $x_{\{s,j\}}$ rather than explicit $y_j$ variables.
For the child states, recall from Sections~\ref{sec:dp_framework} and~\ref{sec:bct_mip} the state sets $\Lambda_C \subseteq \{0,\ldots,\bar{b}\} \times \mathcal{R}_C$ over the relevant child radii $\mathcal{R}_C$ and the selectors $\lambda^C_{b',r'}$.
We write $\mathcal{R}_C^+ := \{r' \in \mathcal{R}_C : r'>0\}$ and $\mathcal{R}_C^- := \{r' \in \mathcal{R}_C : r'<0\}$ for the positive and negative relevant child radii.
As shorthand for the selected child states, the \emph{activations} $\alpha^C_{\text{out}}[r']$ for $r' \in \mathcal{R}_C^+$ and $\alpha^C_{\text{int}}[r']$ for $r' \in \mathcal{R}_C^-$ indicate that the child state with radius $r'$ is selected, and their aggregates $\alpha^C_{\text{out}}$ and $\alpha^C_{\text{int}}$ that some OUT resp.\ INT state of $C$ is selected; they are plain sums of the selectors, defined by Constraints~\eqref{eq:bfg_act_out}--\eqref{eq:bfg_act_agg} of the master formulation.

The construction rests on the following observation: the border conditions largely preserve the flow model.
They modify the \emph{instance} on which it is solved, while the model itself remains unchanged and is only extended to enforce the border conditions and to couple the objective and budget to the precomputed child states.
Recall from Section~\ref{sec:flowmip} that a node is covered if and only if there is an $s$-path of length at most $\bar{d}$ via edges in the solution.
The border conditions extend exactly the two sides of this characterization, following the auxiliary-node interpretation of Section~\ref{sec:bct_border} and Figure~\ref{fig:bc_auxiliary_nodes}: external coverage entering the block adds new origins for coverage paths (each virtual facility is added to the instance as a further facility), and coverage that $B$ must provide to a neighboring block adds new destinations (each virtual demand node is added as a further demand node).
On this auxiliary instance, the flow constraints of Section~\ref{sec:flowmip} apply unchanged; this coupling consists of the child-state selectors, which also add the DP value and child budget terms to the objective and budget, together with the linking constraints that tie the virtual elements to the selected states.

\subsection{The auxiliary instance}
\label{app:flow_border_structure}

Each of the four non-BLANK border conditions of Section~\ref{sec:bct_border} adds one vertex with its connecting arcs to the instance (the BLANK conditions $r = 0$, $r_C = 0$ add nothing; cf.\ Table~\ref{tab:aux_nodes} of the main paper):
\begin{itemize}
	\item \emph{Parent OUT ($r > 0$).}
	The virtual facility at distance $\bar{d} - r$ from $a_B$ is added to the auxiliary instance as a vertex $w_B$, connected like any other facility:
	by the facility arc $(s, w_B)$ of length $0$, and by the connecting arc $(w_B, a_B)$ of length $\bar{d} - r$ that leads into the block.
	Flow through $w_B$ means that a coverage path originates at the parent side and enters $B$ at $a_B$ with remaining radius $r$.
	Since each parent state is solved as its own MILP, $w_B$ is part of the parent-OUT MILPs ($r > 0$) only and open whenever present; using it remains optional.
	Unlike a regular facility, its opening is not a design decision and has no cost.
	\item \emph{Child INT states ($r' \in \mathcal{R}_C^-$).}
	The virtual facility of a child INT state is added likewise, as a vertex $w_C[r']$ with the arcs $(s, w_C[r'])$ of length $0$ and $(w_C[r'], a_C)$ of length $\bar{d} - |r'|$, one per candidate radius.
	Because the child states are chosen inside the MILP, $w_C[r']$ is open only if the INT state with radius $r'$ is selected, i.e., if $\alpha^C_{\text{int}}[r'] = 1$; even then it remains optional.
	\item \emph{Child OUT states ($r' \in \mathcal{R}_C^+$).}
	The virtual demand node of a child OUT state is added as a vertex $d_C$, one per child, with the single connecting arc $(a_C, d_C)$.
	Its length depends on the selected state,
	$\ell_{(a_C, d_C)} := \sum_{r' \in \mathcal{R}_C^+} r'\, \alpha^C_{\text{out}}[r']$,
	i.e., it is the selected radius $r_C$ of the child selection $\lambda^C$ if an OUT state of $C$ is selected and $0$ otherwise.\footnote{Conceptually, each candidate radius $r' \in \mathcal{R}_C^+$ has its own virtual demand node at distance $r'$ from $a_C$. Since exactly one state is selected, they merge into the single node $d_C$ whose arc carries the selected radius as its length: as in Section~\ref{sec:ext_flowmip}, one radius-independent commodity per child suffices.}
	The node $d_C$ must be covered exactly if an OUT state of $C$ is selected; by the covering characterization above, covering it within $\bar{d}$ then means reaching $a_C$ within $\bar{d} - r_C$.
	\item \emph{Parent INT ($r < 0$).}
	The virtual demand node of the parent INT case is added likewise, as a vertex $d_B$ with the connecting arc $(a_B, d_B)$ of length $|r|$.
	It is part of the parent-INT MILPs ($r < 0$) only and must be covered there: an $s$-path of length at most $\bar{d}$ to $d_B$ reaches $a_B$ with remaining radius $|r|$, i.e., an opened facility covers $a_B$ with radius $|r|$ to spare.
\end{itemize}
Virtual demand nodes carry no objective weight; the value of the coverage provided to a neighboring block enters through the DP terms $V(C,b',r')$ instead.

\paragraph{The auxiliary instance.}
Altogether, the auxiliary instance of block $B$ and parent state $(b,r)$ is the graph $\widetilde{G}_B$, obtained from $\overline{G}_B$ as follows:
\begin{itemize}
	\item \emph{Nodes.} The block nodes $N_B$ and the source node $s$, extended by the virtual facilities
	$\mathcal{W}_B := \{w_B\} \cup \{w_C[r'] : C \in \mathcal{C}_B,\ r' \in \mathcal{R}_C^-\}$
	and the virtual demand nodes
	$\mathcal{D}_B := \{d_C : C \in \mathcal{C}_B\} \cup \{d_B\}$.
	\item \emph{Arcs.} The arc set $\widetilde{A}_B$ contains both directions of every edge in $\overline{E}_B$ (block edges and facility edges), extended by the virtual arcs listed above.
	Virtual arcs are directed and have no reverse arcs: flow can pass a virtual facility only from $s$ into the block, and can enter a virtual demand node but not leave it.
	From now on, $\delta^\pm(v)$ are taken with respect to $\widetilde{A}_B$.
	\item \emph{Demands.} The demand set is $\widetilde{N}_B := \bigl(N_B \setminus \{a_B\}\bigr) \cup \mathcal{D}_B$: every block node except the parent AP, whose demand is counted in the parent block, plus the virtual demand nodes.
	Exactly as in Section~\ref{sec:flowmip}, demand $k$ is covered if one unit of its commodity $f^k$ runs from $s$ to $k$ within length $\bar{d}$: the coverage radius is the same for all demands, and the border distances are carried by the virtual arcs.
	Every demand carries a coverage indicator $z_k$, the right-hand side of its conservation constraint; the only distinction is that covering a real demand is a free decision, whereas the virtual demand nodes are forced to be covered exactly as required, by the explicit constraints $z_{d_C} = \alpha^C_{\text{out}}$ and $z_{d_B} = 1$ of Section~\ref{app:flow_border_milp}.
\end{itemize}
At most one of $w_B$ and $d_B$ is present, depending on the sign of $r$; Section~\ref{app:flow_border_milp} states the convention under which a single formulation covers all three cases.

\subsection{The block-level MILP}
\label{app:flow_border_milp}

Rather than stating three near-identical MILPs, we present one \emph{master formulation}, valid for every parent radius $r$, under the following convention:
\begin{quote}
	The parent virtual facility $w_B$ exists only if $r > 0$; the parent virtual demand node $d_B$ exists only if $r < 0$; for $r = 0$ neither exists.
	Elements that do not exist are omitted from all constraints together with their arcs and flow variables (equivalently, fixed to $0$).
\end{quote}
Instantiating the convention yields the three cases of Section~\ref{sec:dp_framework}: for OUT ($r>0$) the virtual demand node $d_B$ is absent together with its arc, its commodity, and Constraint~\eqref{eq:bfg_zdb}; for INT ($r<0$) the virtual facility $w_B$ is absent with its arcs and flow variables; for BLANK ($r=0$) both are absent and only the child-side virtual elements remain.
Section~\ref{app:implementation_notes} describes how the transitions between these cases are realized on a single MILP instance.

\paragraph{Objective, budget, and child-state selection.}
These rows realize the recursion~\eqref{eq:dp_highlevel} and extend the objective and budget of the base formulation by the child terms:
the objective~\eqref{eq:bfg_obj} maximizes the demand covered inside $B$ plus the DP values of the selected child states (the parent AP $a_B$ is excluded; its demand is counted in the parent block);
\eqref{eq:bfg_budget} charges the local design cost plus the budgets of the selected child states against $b$;
\eqref{eq:bfg_select} selects exactly one state per child.
\begingroup
\milpsetup
\begin{align}
\max \quad & \sum_{k \in N_B \setminus \{a_B\}} h_k z_k
+ \sum_{C \in \mathcal{C}_B} \sum_{(b',r') \in \Lambda_C} V(C, b', r') \lambda^C_{b',r'}
\label{eq:bfg_obj} \\
\text{s.t.} \quad
& \sum_{e \in \overline{E}_B} c^E_e x_e
+ \sum_{C \in \mathcal{C}_B} \sum_{(b',r') \in \Lambda_C} b' \lambda^C_{b',r'} \le b
\label{eq:bfg_budget} \\
& \sum_{(b',r') \in \Lambda_C} \lambda^C_{b',r'} = 1
&& \forall C \in \mathcal{C}_B
\label{eq:bfg_select}
\end{align}
\endgroup

\paragraph{Child-state activations.}
Constraints~\eqref{eq:bfg_act_out}--\eqref{eq:bfg_act_agg} define the activations $\alpha^C_{\text{out}}[r']$, $\alpha^C_{\text{int}}[r']$ and their aggregates used throughout Section~\ref{app:flow_border_structure}.
They are sums of selectors and introduce no new decisions; they feed the opening of the child virtual facilities, the coverage indicators of the virtual demand nodes, and the length of the arcs $(a_C, d_C)$.
Together with~\eqref{eq:bfg_select} they satisfy $\alpha^C_0 + \alpha^C_{\text{out}} + \alpha^C_{\text{int}} = 1$, where $\alpha^C_0 := \sum_{b': (b',0) \in \Lambda_C} \lambda^C_{b',0}$ selects the BLANK child state.
\begingroup
\milpsetup
\begin{align}
& \alpha^C_{\text{out}}[r'] = \sum_{b' :\, (b',r') \in \Lambda_C} \lambda^C_{b',r'}
&& \forall C \in \mathcal{C}_B,\ r' \in \mathcal{R}_C^+
\label{eq:bfg_act_out} \\
& \alpha^C_{\text{int}}[r'] = \sum_{b' :\, (b',r') \in \Lambda_C} \lambda^C_{b',r'}
&& \forall C \in \mathcal{C}_B,\ r' \in \mathcal{R}_C^-
\label{eq:bfg_act_int} \\
& \alpha^C_{\text{out}} = \sum_{r' \in \mathcal{R}_C^+} \alpha^C_{\text{out}}[r'], \quad
\alpha^C_{\text{int}} = \sum_{r' \in \mathcal{R}_C^-} \alpha^C_{\text{int}}[r']
&& \forall C \in \mathcal{C}_B
\label{eq:bfg_act_agg}
\end{align}
\endgroup

\paragraph{Coverage of the virtual demand nodes.}
Covering a virtual demand node is not a decision but a consequence of the selected states, imposed by two explicit constraints:
\eqref{eq:bfg_zdc} forces $d_C$ to be covered exactly if an OUT state of $C$ is selected, and \eqref{eq:bfg_zdb} makes the parent obligation unconditional.
With these constraints, the coverage indicators $z_k$, $k \in \widetilde{N}_B$, enter the flow formulation below in the same way for real and for virtual demand nodes.
\begingroup
\milpsetup
\begin{align}
& z_{d_C} = \alpha^C_{\text{out}}
&& \forall C \in \mathcal{C}_B
\label{eq:bfg_zdc} \\
& z_{d_B} = 1
\label{eq:bfg_zdb}
\end{align}
\endgroup

\paragraph{The flow formulation on the auxiliary instance.}
Constraints~\eqref{eq:bfg_cons}--\eqref{eq:bfg_len} are the flow formulation of Section~\ref{sec:flowmip} on the auxiliary instance $\widetilde{G}_B$, with one commodity $f^k$ per demand $k \in \widetilde{N}_B$.
Flow conservation~\eqref{eq:bfg_cons} carries over unchanged: commodity $k$ has supply $z_k$ at the source $s$ and demand $z_k$ at its node $k$, and the third case is standard flow conservation at every other node.
The edge-availability constraints~\eqref{eq:bfg_link} likewise carry over: an edge can carry flow, in either direction, only if it is upgraded, and a facility edge only if its facility is opened.
Virtual facilities have no design variables, so \eqref{eq:bfg_vlink} plays this role for them, opening a child facility $w_C[r']$ exactly if its INT state is selected.
The parent facility $w_B$ is open whenever it exists and requires no such constraint, and neither do the connecting arcs, whose flows coincide with those on the facility arcs.
The length bound~\eqref{eq:bfg_len} is the length bound of the base formulation with all virtual contributions written out:
a path through a virtual facility consumes the offset length of its connecting arc, accounted on its facility-arc flow, and the connecting arc of a virtual demand node contributes the case term, its length times the coverage indicator, which simplifies to $\sum_{r' \in \mathcal{R}_C^+} r'\, \alpha^C_{\text{out}}[r']$ resp.\ $|r|\, z_{d_B}$ by~\eqref{eq:bfg_zdc} resp.~\eqref{eq:bfg_zdb}; every term of~\eqref{eq:bfg_len} is therefore linear.
\begingroup
\milpsetup
\begin{align}
& \sum_{a \in \delta^+(v)} f^k_a - \sum_{a \in \delta^-(v)} f^k_a
= \begin{cases} \hphantom{-}z_k & \text{if } v = s \\ -z_k & \text{if } v = k \\ \hphantom{-}0 & \text{otherwise} \end{cases}
&& \forall k \in \widetilde{N}_B,\ v \in \overline{N}_B \cup \mathcal{W}_B \cup \mathcal{D}_B
\label{eq:bfg_cons} \\
& f^k_{(u,v)} + f^k_{(v,u)} \le x_e
&& \forall k \in \widetilde{N}_B,\ e \in \overline{E}_B
\label{eq:bfg_link} \\
& f^k_{(s, w_C[r'])} \le \alpha^C_{\text{int}}[r']
&& \forall k \in \widetilde{N}_B,\ C \in \mathcal{C}_B,\ r' \in \mathcal{R}_C^-
\label{eq:bfg_vlink} \\
& \sum_{a \in \overline{A}_B} \ell_a f^k_a
+ (\bar{d} - r)\, f^k_{(s, w_B)}
+ \sum_{\substack{C \in \mathcal{C}_B \\ r' \in \mathcal{R}_C^-}} (\bar{d} - |r'|)\, f^k_{(s, w_C[r'])} \notag \\
& \qquad + \begin{cases} \sum_{r' \in \mathcal{R}_C^+} r'\, \alpha^C_{\text{out}}[r'] & \text{if } k = d_C \\ |r|\, z_{d_B} & \text{if } k = d_B \\ \hphantom{-}0 & \text{otherwise} \end{cases}
\;\le\; \bar{d}\, z_k
&& \forall k \in \widetilde{N}_B
\label{eq:bfg_len}
\end{align}
\endgroup

\paragraph{Linking the child OUT commodities.}
One additional family of constraints completes the model: if no OUT state of $C$ is selected, then $z_{d_C} = 0$, and the variable upper bounds~\eqref{eq:bfg_gate} force the entire commodity of $d_C$ to zero.
\begingroup
\milpsetup
\begin{align}
& f^{d_C}_a \le \alpha^C_{\text{out}}
&& \forall C \in \mathcal{C}_B,\ a \in \widetilde{A}_B
\label{eq:bfg_gate}
\end{align}
\endgroup

Reading these constraints per demand type shows how the auxiliary instance realizes the border conditions.
For a real demand nothing changes relative to Section~\ref{sec:flowmip}, except that a coverage path may now originate at a virtual facility, entering the block at the respective AP with exactly the remaining radius that~\eqref{eq:bfg_len} accounts for; in particular, a demand node that is itself a child AP may be covered through its own child's virtual facility.
For the child OUT obligation ($k=d_C$), the case term of~\eqref{eq:bfg_len} equals the selected radius $r_C$, so the facility supplying this coverage must reach $a_C$ with radius $r_C$ to spare; it is opened in $B$, or it is virtual, namely the parent side ($w_B$, if $r>0$) or a sibling placed in an INT state.\footnote{With a virtual facility as source, \eqref{eq:bfg_len} reduces to path length $\le r - r_C$ resp.\ $\le |r_{C'}| - r_C$ for a sibling $C'$ in an INT state: external coverage may continue through $B$ and satisfy the OUT condition of $C$ whenever the connecting path is short enough. The sibling facilities give one flow per ordered pair of children of $B$.}
For the parent INT obligation ($k=d_B$), the case term $|r|$ leaves at most $\bar{d}-|r|$ for the path to $a_B$, which is exactly the covering condition the parent side imposes.

\paragraph{Variable domains.}
All variables are binary; the virtual elements exist as stated by the convention above.
The integrality of all flow variables can be relaxed to $[0,1]$ without loss.
\begingroup
\milpsetup
\begin{align}
& x_e \in \{0,1\} \ \forall e \in \overline{E}_B, \qquad
z_k \in \{0,1\} \ \forall k \in \widetilde{N}_B, \notag \\
& \lambda^C_{b',r'} \in \{0,1\} \ \forall C \in \mathcal{C}_B,\ (b',r') \in \Lambda_C, \quad
\alpha^C_{\text{out}}[r'],\ \alpha^C_{\text{int}}[r'],\ \alpha^C_{\text{out}},\ \alpha^C_{\text{int}} \in \{0,1\}, \notag \\
& f^k_a \in \{0,1\} \ \forall k \in \widetilde{N}_B,\ a \in \widetilde{A}_B \notag
\end{align}
\endgroup

The auxiliary instance is also how the formulation is built in practice; Section~\ref{app:implementation_notes} describes how a single MILP instance per block carries every DP state.

\section{Bound propagation details}
\label{app:bound_propagation}

This appendix collects the formal assumption, statements, proofs, and bound derivations summarized in Section~\ref{sec:propagating_bounds}.

\paragraph{Decomposition assumption.}
The empirical observation from Section~\ref{sec:bct_mip} (that the $\lambda$-allocation is recovered exactly from the incumbent's local design $(\hat{x}, \hat{z})$ via MCKP) motivates the following assumption used in the gap analysis below.

\begin{assumption}\label{ass:exact_child_alloc}
	For the theoretical gap analysis we assume that the child allocation is solved exactly given the local decisions $(\hat{x}, \hat{z})$.
	That is, the primal bound $l(B, b, r)$ returned by a sub-optimal MILP solve is optimal with respect to $\lambda$ given $(\hat{x}, \hat{z})$:
	\[
	l(B,b,r) \;=\; \max_{\lambda} \Bigl( h_B(\hat{x}, \lambda) \;+\; \sum_{C \in \mathcal{C}_B} l(C,\lambda^C) \Bigr),
	\]
	where $h_B(\hat{x}, \lambda)$ denotes the local coverage contribution of the primal solution.
\end{assumption}

\paragraph{Supporting experiment.}
A preliminary experiment motivates the assumption: we re-solved the selector subproblem~\eqref{eq:selector_subproblem} alone from the incumbent $(\hat{x}, \hat{z})$ of 41 sampled block solves, 13 of which were sub-optimal and therefore in the scope of the assumption.
In all 13, the MCKP recovered the incumbent's $\lambda$ exactly and in at most $0.13$\,s, supporting the qualitative property the assumption abstracts, namely that the selector subproblem is easy relative to the network-design part.
The sampling protocol and the per-solve outcomes (\texttt{results/lambda\_experiment\_combined.csv}) are in the online code repository.

\paragraph{Gap propagation setup.}
Consider a block $B$ with children $\mathcal{C}_B$,
where valid bounds $l(C,\lambda^C) \le V(C,\lambda^C) \le u(C,\lambda^C)$ with gap $\Delta(C,\lambda^C) := u(C,\lambda^C) - l(C,\lambda^C)$ are available for each child $C \in \mathcal{C}_B$ and each state $\lambda^C \in \Lambda_C$.
When solving the MILP for $V(B, b, r)$, for lack of the optimal values $V(C,\lambda^C)$, we use child \emph{lower} bounds $l(C,\lambda^C)$ for the child contributions in~\eqref{eq:dp_highlevel}.
For that problem, the solver returns a primal bound $l(B, b, r)$ and a dual bound $\tilde{u}(B, b, r)$:
\[
l(B, b, r) \;\le\;
\max_{x, \, \lambda} \Bigl(
h_B(x,\lambda)\;+\; \sum_{C \in \mathcal{C}_B} l(C,\lambda^C)
\Bigr)
\;\le\; \tilde{u}(B, b, r).
\]
Note that, since that computation did not use the child upper bounds $u(C,\lambda^C)$, $\tilde{u}(B,b,r)$ is not a valid upper bound on $V(B,b,r)$,
however, we can make it one by adding the maximum gap of each child block.

\begin{proposition}[Additive gap propagation]\label{prop:additive_gap}
	Define $\Delta_{\max}(C) := \max_{(b',r') \in \Lambda_C} \Delta(C,b',r')$.
	Then $u(B,b,r) := \tilde{u}(B,b,r) + \sum_{C \in \mathcal{C}_B} \Delta_{\max}(C)$ is a valid upper bound on $V(B,b,r)$.
\end{proposition}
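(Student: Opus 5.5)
The plan is to compare the true recursion~\eqref{eq:dp_highlevel}, which uses the exact child values $V(C,\lambda^C)$, with the surrogate problem the solver actually bounds, which uses the child lower bounds $l(C,\lambda^C)$. Let $(x^*,\lambda^*)$ be an optimal local design and child selection attaining $V(B,b,r)$, so that
\[
V(B,b,r) \;=\; h_B(x^*,\lambda^*) + \sum_{C\in\mathcal{C}_B} V(C,\lambda^{*C}).
\]
The feasible region of the block MILP, meaning the budget constraint~\eqref{eq:bct_mip_budget2}, the selection constraint~\eqref{eq:bct_mip_select2} and the feasibility set $\mathcal{F}(r,\{\lambda^C\})$, does not depend on whether exact values or lower bounds enter the objective. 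Hence $(x^*,\lambda^*)$ is also feasible for the surrogate problem.

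The key step is a termwise estimate for each child. Since $V(C,\lambda^C)\le u(C,\lambda^C) = l(C,\lambda^C)+\Delta(C,\lambda^C)$ and $\lambda^{*C}\in\Lambda_C$, we have
\[
V(C,\lambda^{*C}) \le l(C,\lambda^{*C}) + \Delta_{\max}(C).
\]
Summing over $C\in\mathcal{C}_B$ and adding $h_B(x^*,\lambda^*)$ gives
\[
V(B,b,r) \le h_B(x^*,\lambda^*) + \sum_{C} l(C,\lambda^{*C}) + \sum_{C}\Delta_{\max}(C).
\]
The first two terms are the surrogate objective evaluated at the feasible point $(x^*,\lambda^*)$. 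They are therefore at most the surrogate maximum, which in turn is at most $\tilde{u}(B,b,r)$ by validity of the solver's dual bound. Together these give $V(B,b,r)\le \tilde{u}(B,b,r)+\sum_C\Delta_{\max}(C) = u(B,b,r)$.

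I expect the only delicate point to be the justification that the surrogate and the true problem share the same feasible set. The local coverage term $h_B(x,\lambda)$ depends on $\lambda$ only through the child radii, not through the child values. The child contributions enter the objective only as constants multiplied by the selectors, as in~\eqref{eq:bct_mip_obj2}. Replacing $V$ by $l$ therefore changes only objective coefficients. A second point is that the child bounds must be valid for every state in $\Lambda_C$, including dominated or skipped states whose bounds were built from neighbouring states as described in Section~\ref{sec:propagating_bounds}. This is covered by the hypothesis that valid bounds exist for each $\lambda^C\in\Lambda_C$. Taking the maximum gap $\Delta_{\max}(C)$ is what makes the bound independent of the unknown optimal selection $\lambda^*$.
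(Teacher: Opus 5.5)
Your proof is correct and follows the paper's argument. You bound each child value by $l(C,\lambda^C)+\Delta_{\max}(C)$, dominate the resulting lower-bound surrogate maximum by the solver's dual bound $\tilde{u}(B,b,r)$, and add back $\sum_{C\in\mathcal{C}_B}\Delta_{\max}(C)$. The only difference is presentational: the paper writes this as a chain of inequalities between maxima over $(x,\lambda)$, whereas you fix an optimal pair $(x^*,\lambda^*)$ and evaluate the surrogate there, which makes explicit the shared-feasible-set point the paper leaves implicit.
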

\begin{proof}
	Replacing each child value $V(C,\lambda^C)$ by its upper bound $u(C,\lambda^C) = l(C,\lambda^C) + \Delta(C,\lambda^C)$, and bounding each $\Delta(C,\lambda^C) \le \Delta_{\max}(C)$, gives
	\begin{align*}
		V(B,b,r) \;&\le\; \max_{x, \, \lambda} \Bigl(
		h_B(x,\lambda)\;+\; \sum_{C \in \mathcal{C}_B} u(C,\lambda^C)
		\Bigr)\\
		&\le \max_{x, \, \lambda} \Bigl(
		h_B(x,\lambda)\;+\; \sum_{C \in \mathcal{C}_B} l(C,\lambda^C)
		\Bigr) \;+\; \sum_{C \in \mathcal{C}_B} \Delta_{\max}(C) \\
	&\le\; \tilde{u}(B, b, r) \;+\; \sum_{C \in \mathcal{C}_B} \Delta_{\max}(C).
\end{align*}
\end{proof}

To avoid the conservatism of replacing state-dependent gaps by $\Delta_{\max}$, one can solve an explicit upper bounding problem
$\bar{V}(B,b,r) := \max_{x, \lambda} \bigl( h_B(x,\lambda) + \sum_C u(C,\lambda^C) \bigr)$,
yielding the tighter chain $l(B,b,r) \le V(B,b,r) \le \bar{V}(B,b,r) \le u(B,b,r)$.

\paragraph{Relative gap preservation.}
While additive gaps accumulate, relative gaps are preserved under Assumption~\ref{ass:exact_child_alloc}.
We assume $l(B,\lambda^B) > 0$ and $h_B(\hat{x}, \lambda) > 0$ for all states under consideration; states with zero local coverage can be pruned from the DP table.

\begin{proposition}[Relative gap preservation]\label{prop:relative_gap}
	Under Assumption~\ref{ass:exact_child_alloc}, suppose that for block~$B$ in state~$\lambda^B$, the primal network design~$\hat{x}$ satisfies
	\[
		h_B(\hat{x}, \lambda) \;\geq\; \alpha_B \cdot \max_x\, h_B(x, \lambda)
		\qquad \text{for all child allocations } \lambda,
	\]
	with $\alpha_B \in (0,1]$.
	Suppose inductively that for each child~$C \in \mathcal{C}_B$,
	\[
		l(C,\lambda^C) \;\geq\; \alpha \cdot V(C,\lambda^C)
		\qquad \text{for all } \lambda^C \in \Lambda_C,
	\]
	for some common ratio $\alpha \in (0,1]$.
	Then
	\[
		l(B,\lambda^B) \;\geq\; \min(\alpha_B,\, \alpha) \cdot V(B, \lambda^B).
	\]
\end{proposition}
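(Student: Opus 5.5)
The plan is to compare the primal bound with an optimal solution of the exact recursion~\eqref{eq:dp_highlevel}, one term at a time. Fix block $B$ in state $\lambda^B=(b,r)$. Let $(x^*,\lambda^*)$ attain $V(B,\lambda^B)$, so that
\[
V(B,\lambda^B) \;=\; h_B(x^*,\lambda^*) + \sum_{C\in\mathcal{C}_B} V(C,\lambda^{*C}).
\]
The strategy is to exhibit one feasible child allocation for the maximization in Assumption~\ref{ass:exact_child_alloc} whose value is at least $\min(\alpha_B,\alpha)\,V(B,\lambda^B)$. The natural candidate is $\lambda^*$ itself.

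First, Assumption~\ref{ass:exact_child_alloc} says $l(B,\lambda^B)$ is the maximum over $\lambda$ of $h_B(\hat{x},\lambda)+\sum_C l(C,\lambda^C)$. Evaluating this maximum at $\lambda=\lambda^*$ gives
\[
l(B,\lambda^B)\;\ge\; h_B(\hat{x},\lambda^*) + \sum_{C} l(C,\lambda^{*C}).
\]

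Second, bound the two parts separately. The local-ratio hypothesis, applied at $\lambda=\lambda^*$, gives $h_B(\hat{x},\lambda^*)\ge \alpha_B \max_x h_B(x,\lambda^*)\ge \alpha_B\, h_B(x^*,\lambda^*)$. The induction hypothesis gives $l(C,\lambda^{*C})\ge \alpha\, V(C,\lambda^{*C})$ for each child $C$.

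Third, set $\mu:=\min(\alpha_B,\alpha)$. Since all quantities are nonnegative (we assumed positive local coverage, and the child DP values are coverages), we can lower both factors to $\mu$ and sum:
\[
l(B,\lambda^B)\;\ge\; \mu\Bigl(h_B(x^*,\lambda^*)+\sum_C V(C,\lambda^{*C})\Bigr)\;=\;\mu\, V(B,\lambda^B).
\]

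The main obstacle is feasibility, not the arithmetic. The step $h_B(\hat{x},\lambda^*)\ge\alpha_B\max_x h_B(x,\lambda^*)$ silently requires the pair $(\hat{x},\lambda^*)$ to be admissible in the maximization of Assumption~\ref{ass:exact_child_alloc}. The incumbent design may be incompatible with the border conditions of $\lambda^*$, or may exceed the budget once combined with $\lambda^*$'s child budgets.

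My plan is to read the hypothesis ``for all child allocations $\lambda$'' as implicitly asserting that $h_B(\hat{x},\lambda)$ is defined, i.e.\ that $(\hat{x},\lambda)$ is feasible, whenever some $x$ is feasible with $\lambda$. Under this reading every allocation admissible for the optimum is admissible for $\hat{x}$, and this should be stated explicitly in the proof. If that reading is too strong, the fallback is to encode infeasibility as $h_B=-\infty$. Then the hypothesis applied at $\lambda^*$, where $\max_x h_B(x,\lambda^*)$ is finite, itself forces $(\hat{x},\lambda^*)$ to be feasible, and the argument goes through unchanged.

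Finally, Corollary~\ref{cor:global_ratio} will follow by induction up the block-cut tree. Leaves have no children, so the child hypothesis holds vacuously with $\alpha=1$. At each level the ratio becomes the minimum over the blocks in the subtree, which yields $\min_B\alpha_B$ at the root.
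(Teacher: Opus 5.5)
Your proposal is correct and follows essentially the same proof as the paper. You evaluate the maximization in Assumption~\ref{ass:exact_child_alloc} at the optimal allocation $\lambda^*$, apply $\alpha_B$ to $h_B(x^*,\lambda^*)$ and $\alpha$ to the child terms, and lower both to $\min(\alpha_B,\alpha)$. Your treatment of the admissibility of $(\hat{x},\lambda^*)$, encoding infeasibility as $-\infty$ so that the local-ratio hypothesis at $\lambda^*$ excludes it, justifies a point the paper passes over when it simply calls $\lambda^*$ ``the feasible point''.
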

\begin{proof}
	Let $(x^*, \lambda^*)$ achieve $V(B, \lambda^B) = h_B(x^*, \lambda^*) + \sum_C V(C,\lambda^{C*})$,
	and write $H := h_B(x^*, \lambda^*)$, $S := \sum_C V(C,\lambda^{C*})$.
	By Assumption~\ref{ass:exact_child_alloc}, $l(B,\lambda^B) = \max_\lambda \bigl( h_B(\hat{x}, \lambda) + \sum_C l(C,\lambda^C) \bigr)$.
	Evaluating at the feasible point~$\lambda^*$:
	\[
	\begin{aligned}
	l(B,\lambda^B)
	&\;\geq\; h_B(\hat{x}, \lambda^*) + \sum_C l(C,\lambda^{C*})
	\;\geq\; \alpha_B \cdot H \;+\; \alpha \cdot S \\
	&\;\geq\; \min(\alpha_B, \alpha) \cdot (H + S)
	\;=\; \min(\alpha_B, \alpha) \cdot V(B, \lambda^B).
	\end{aligned}
	\]
\end{proof}
\begin{corollary}[Global approximation ratio]\label{cor:global_ratio}
	Applying Proposition~\ref{prop:relative_gap} inductively from the leaves to the root, the global ratio satisfies
	$l(\textup{root}) \geq \alpha_{\textup{global}} \cdot V(\textup{root})$ with
	$\alpha_{\textup{global}} = \min_{B \in \mathcal{T}} \alpha_B$,
	determined solely by the weakest local solve in the tree.
\end{corollary}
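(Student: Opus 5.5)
The plan is structural induction over the rooted block-cut tree $\mathcal{T}$, processing blocks in post-order (leaves first). Set $\alpha^\ast := \min_{B \in \mathcal{T}} \alpha_B$. I will prove the stronger claim that for every block $B$ and every state $\lambda^B = (b,r)$ in its table,
\[
l(B,\lambda^B) \;\ge\; \alpha^\ast \cdot V(B,\lambda^B).
\]
The corollary then follows by evaluating this at the root block in the BLANK state with full budget $\bar{b}$, since $V(\textup{root}) = V(B_r,\bar{b},0)$.

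\emph{Base case: leaf blocks.} A leaf block has $\mathcal{C}_B = \emptyset$, so the child sum $S$ vanishes. The hypothesis of Proposition~\ref{prop:relative_gap} on the children is then vacuous and holds for any common ratio, in particular $\alpha = \alpha^\ast$. The proposition gives $l(B,\lambda^B) \ge \min(\alpha_B,\alpha^\ast)\, V(B,\lambda^B) = \alpha^\ast V(B,\lambda^B)$, because $\alpha_B \ge \alpha^\ast$ by the definition of $\alpha^\ast$.

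\emph{Inductive step.} Suppose the bound with ratio $\alpha^\ast$ holds for every state of every child $C \in \mathcal{C}_B$. This is exactly the inductive hypothesis of Proposition~\ref{prop:relative_gap} with common ratio $\alpha = \alpha^\ast$. Together with the local guarantee $\alpha_B$ for block $B$ and Assumption~\ref{ass:exact_child_alloc}, the proposition yields $l(B,\lambda^B) \ge \min(\alpha_B,\alpha^\ast)\,V(B,\lambda^B) = \alpha^\ast V(B,\lambda^B)$. The ratio therefore never degrades, because the minimum is taken against the same global constant at every level. This is why the bound does not depend on tree depth.

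\emph{Where care is needed.} The main point is the bookkeeping of the induction hypothesis. Proposition~\ref{prop:relative_gap} requires one \emph{common} ratio valid for all states of all children. Using per-subtree minima $\min_{B' \in T_C}\alpha_{B'}$ would give different ratios for different children; these would then have to be replaced by their minimum, which is at least $\alpha^\ast$. Fixing $\alpha^\ast$ globally from the start avoids this issue. Two further checks are needed: the local guarantee $\alpha_B$ must hold for every state of $B$ that a parent may select, not only the one solved, and the positivity assumptions of the preceding paragraph ($l > 0$, pruning states with zero local coverage) must hold throughout. I will note that pruned or $-\infty$ infeasible states are never selected by a parent's optimum. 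Such states therefore do not enter the inequality chain in the proof of Proposition~\ref{prop:relative_gap}, and the argument goes through on the remaining states.
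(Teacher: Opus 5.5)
Your proposal is correct and follows the paper's argument: you run an induction over the rooted block-cut tree from the leaf blocks upward and invoke Proposition~\ref{prop:relative_gap} at each block, with the common child ratio fixed to $\min_B \alpha_B$, so that $\min(\alpha_B,\alpha)$ never falls below the global minimum. Your added details (the vacuous child hypothesis at leaves, the requirement that $\alpha_B$ hold uniformly over all states of $B$, and evaluating at $V(B_r,\bar b,0)$) make explicit what the paper's one-line justification leaves implicit.
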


\paragraph{Bounds from neighboring states.}
Under limited time it may be a choice to solve only a subset of all states; we illustrate how upper bounds are derived when the BLANK state $V(B,b,0)$ is computed but the OUT and INT states are not.
Let $a_B$ be the parent articulation point of block $B$, let
$N^B(a_B, \rho) := \{ v \in T_B \mid d(a_B,v) \leq \rho \}$
with total weight $w^B(a_B,\rho) := \sum_{v \in N^B(a_B,\rho)} h_v$.
For $r > 0$, external coverage adds at most the weight of newly reachable nodes, giving the OUT bound $V(B,b,r) \le V(B,b,0) + w^B(a_B,r)$ stated in Section~\ref{sec:propagating_bounds}.
For $r < 0$, the block must open a facility within distance $\bar{d}-|r|$ of $a_B$, which covers nodes up to $\bar{d}$ beyond it, hence within $2\bar{d}-|r|$ of $a_B$.
Letting $c_{\min}(\rho) := \min_{j \in N^B(a_B,\rho) \cap F_B} c^F_j$ and assuming feasibility (i.e., $N^B(a_B,\bar{d}-|r|) \cap F_B \neq \emptyset$ and $b \ge c_{\min}(\bar{d}-|r|)$):
\begin{equation}
	V(B, b, r) \;\le\; V(B, b - c_{\min}(\bar{d}-|r|), 0) + w^B(a_B, 2\bar{d}-|r|) \qquad \text{for } r < 0.
	\label{eq:int_from_blank}
\end{equation}
Removing this cheapest eligible facility from an optimal INT solution, but keeping its edges, frees exactly $c_{\min}(\bar{d}-|r|)$ of budget and loses only coverage within $2\bar{d}-|r|$ of $a_B$; the remaining budget is bounded by the BLANK value.
Combined with monotonicity (Section~\ref{sec:dp_framework}), these bounds allow us to tighten gaps across related states in negligible time in case lower bounds are computed by other heuristic methods.

\section{Implementation choices}
\label{sec:appendix_implementation}
\label{app:implementation}
The BC-tree DP framework involves several implementation choices that affect computational performance.
We summarize the key decisions made after preliminary experiments.

\paragraph{Warm-starting and state ordering.}
We employ two complementary warm-starting strategies.
First, each block MILP is initialized with a greedy Prim-like construction that grows a subgraph from the source node $s$, selecting edges by a profit function that trades coverage gain against upgrade cost and respects the budget; its profit function is documented with the code.
Second, we exploit the monotonicity of DP values across neighboring states: given a solution for state $V(B,b,r)$, the solutions for $V(B,b+1,r)$ and $V(B,b,r+1)$ can only improve, so previous solutions provide valid lower bounds and warm starts.
Of the possible iteration orders, preliminary experiments favored budgets increasing from $0$ to $\bar{b}$ in the outer loop and, for each budget, the signed radius increasing from $-\bar{d}$ to $\bar{d}$ in the inner loop, in line with the monotonicity in $r$.

\paragraph{Incremental model construction.}
\label{app:implementation_notes}
For each block $B$, the flow formulation of Section~\ref{sec:flowmip} restricted to $B$ is built once as a persistent base model, and the border conditions are added as incremental modifications, a feature that modern MILP solvers support directly: the selectors and the virtual elements contribute a moderate number of new columns and rows, while all rows and columns of the base model remain unchanged.
A single model instance per block therefore carries every DP state $(b,r)$.
Consecutive states differ only in the budget $b$ and in a few bound and right-hand-side switches along the convention of Section~\ref{app:flow_border_milp}: for $r \le 0$, the flows through the parent virtual facility are fixed to $0$; for $r \ge 0$, the parent obligation is switched off by setting $z_{d_B} = 0$.
The solver thus keeps the model and its warm-start information across all states of a block.

\paragraph{Root block selection and block merging.}
The BC-tree can be rooted at any block; we default to the largest block by vertex count, which can be expected to be the computationally most challenging one.
Since the root block is solved only for the decoupled condition ($r=0$) at budget $\bar{b}$, placing the root there is the least costly option.
Graphs with many articulation points may decompose into numerous very small biconnected components such as single edges or triangles, for which the overhead of the DP framework may outweigh its benefit; we therefore merge adjacent blocks with at most three vertices, which reduces the number of DP subproblems while increasing the block sizes only marginally.

\paragraph{Relevant budget and distance values.}
A naive implementation iterates over all budgets $b \in \{0,\ldots,\bar{b}\}$ and distances $d \in \{0,\ldots,\bar{d}\}$, but only a subset of these values is achievable: the attainable budgets are the realizable sums of edge costs (a subset sum problem), and the feasible distances are the path lengths that exist in the graph.
We use simple heuristics to identify a superset of relevant values, avoiding enumeration of clearly infeasible states.
This filtering is purely combinatorial; among the remaining states, further states may be dominated and thus not Pareto-optimal, which is detected during the DP iteration, e.g., by obtaining the same solution despite having increased budget or radius.

\paragraph{Time allocation across DP states.}
When a total time limit $T_{\text{total}}$ is imposed, blocks first receive time proportionally to their edge count, $T_B = T_{\text{total}} \cdot |E_B| / \sum_{B'} |E_{B'}|$, of which a small buffer is withheld.
Within each block we then identify a small set of \emph{promising} budgets from the LP relaxation of the BLANK state, using the ratio $\mathrm{UB}_{\mathrm{LP}}(b)/b$ as a measure of marginal coverage per unit budget, and give them a higher weight than the remaining budgets; each budget's weight is shared equally among its associated DP states (one per signed radius $r$).
Time is finally distributed across the $q$ states with a front-loaded weighted schedule: half of the block's time is reserved as a common deadline buffer and the remaining half is split proportionally to the weights $w_i$, so that the $i$-th state receives
\[
T_i \;=\; \tfrac{T}{2} + \frac{\sum_{j \le i} w_j}{\sum_j w_j} \cdot \tfrac{T}{2} \;-\; t_{\text{elapsed}},
\]
and unused time from faster solves automatically accumulates for later states.
States that would receive too little time are skipped, and bounds are constructed from dominating states and heuristics (Section~\ref{sec:propagating_bounds}).
Compared with uniform weighting, the LP-guided scheme concentrates effort on budgets that look most promising in the relaxation, which we found to improve solution quality on hard instances without sacrificing optimality on easy ones.
The exact interval partition, weights, buffer fractions, and skipping threshold are documented with the code in the online repository.

%

\section{Pseudocode}
\label{app:pseudocode}

\SetAlgoSkip{smallskip}
\SetAlgoNlRelativeSize{-2}

This appendix collects pseudocode for the tree DP (Section~\ref{sec:treedp}) and the BC-tree
DP (Section~\ref{sec:bct}), in the unified signed-radius notation of
Section~\ref{sec:dp_framework} and including the practical refinements of
Section~\ref{sec:appendix_implementation}.
The complete pseudocode, with the per-node recursion in full, the helper functions
$\mathcal{A}, \mathcal{B}, \mathcal{C}^1, \mathcal{C}^2$, and the dominance-pruning and
time-allocation procedures, is provided as \texttt{docs/PSEUDOCODE.pdf} in the online code
repository, alongside a code-to-paper notation mapping that points from every algorithm step
to the concrete file and function.

\subsection{Tree DP for the MCNDP}
\label{app:pseudocode_tree}

Algorithm~\ref{alg:treedp} implements the top-level tree DP from Section~\ref{sec:treedp}.
It transforms the input tree into a binary tree, traverses nodes bottom-up, and at every
node fills the table $V(v, b, r)$ for all $(b,r) \in \{0,\ldots,\bar b\} \times \mathcal{R}_v$
from the recursions of Section~\ref{sec:treedp}, which combine the helpers
$\mathcal{A}, \mathcal{B}, \mathcal{C}^1, \mathcal{C}^2$ defined there.
The three radius regimes are distinguished as follows.
At a leaf, $V(v,b,r) = h_v$ if $r \ge \ell_e$ (the parent covers $v$ for free), and otherwise
$h_v$ if $v$ can afford a facility, with the shortfall being $0$ for $r \ge 0$ and $-\infty$ for
$r<0$, where the INT obligation would be violated.
At an inner node, $V(v,b,r)$ is the maximum over opening a facility at $v$ and passing
$\bar d$ downward, forwarding the inherited radius $r - \ell_e$ (only if $r \ge \ell_e$),
leaving $v$ uncovered via $\mathcal{A}$ (only if $0 \le r < \ell_e$), and letting either
subtree cover $v$ and pass a radius outward via $\mathcal{C}^1, \mathcal{C}^2$ (at $0$ for
$r \ge 0$ and at $|r|$ for $r<0$).
Negative budgets and radii outside $\mathcal{R}_v$ are treated as $-\infty$.

{\footnotesize
\begin{algorithm}[H]
\caption{$\textsc{TreeDP}(G, \bar b, \bar d)$: tree DP for the MCNDP.}
\label{alg:treedp}
\SetKwInOut{Input}{Input}\SetKwInOut{Output}{Output}
\Input{Tree $G=(N,E)$ with costs $c^F, c^E$, lengths $\ell$, weights $h$; budget $\bar b$; radius $\bar d$.}
\Output{Optimal objective value, set $Y \subseteq F$ of facilities, set $X \subseteq E$ of upgraded edges.}
Root $G$ at an arbitrary $v_r$ (we use the highest-degree node).\;
Transform $G$ into a binary tree by inserting dummy nodes ($h=0$, $c^F=+\infty$, $c^E=0$) so that every node has at most two children.\;
Compute all-pairs distances $d_G(\cdot,\cdot)$.\;
\ForEach{node $v \in V$}{
  $\mathcal{R}_v \;\leftarrow\; \bigl\{\, \pm(\bar d - d_G(v,u)) \;\bigm|\; u \notin T_v,\, d_G(v,u) < \bar d \,\bigr\} \cup \{0\}$\;
}
\ForEach{$v$ in post-order traversal}{
  \ForEach{$(b, r) \in \{0,\ldots,\bar b\} \times \mathcal{R}_v$}{
    Set $V(v,b,r)$ by the leaf resp.\ inner-node recursion of Section~\ref{sec:treedp}.\;
  }
}
$\mathrm{opt} \;\leftarrow\; V(v_r, \bar b, 0)$\;
$(Y, X) \;\leftarrow\; \textsc{Backtrack}(v_r, \bar b, 0)$\;
\Return{$(\mathrm{opt}, Y, X)$}
\end{algorithm}
}

The DP tables are stored as nested dictionaries indexed by $b$ and the signed radius, and
the helpers iterate over all budget splits $b_1+b_2 \le b$ and the binary edge-upgrade
decisions, which is what the complexity bound $\mathcal{O}(n \bar b^2 R^2)$ of
Section~\ref{sec:treedp} reflects.

\subsection{BC-tree DP for the MCNDP}
\label{app:pseudocode_bct}

Algorithm~\ref{alg:bctdp} extends the framework of Section~\ref{sec:bct} to graphs
with articulation points.
For each block $B$ and each state $(b,r)$ it constructs the parameterized block-level MILP
from Section~\ref{sec:bct_mip} and Section~\ref{app:flow_border}, applies a warm start, and
solves within an allotted time $T_{b,r}$.
The dominance-based pruning and the LP-guided time allocation it invokes are described in
Section~\ref{sec:appendix_implementation}.

{\footnotesize
\begin{algorithm}[H]
\caption{$\textsc{BCTreeDP}(G, \bar b, \bar d, T_{\text{total}})$: BC-tree DP for the MCNDP.}
\label{alg:bctdp}
\SetKwInOut{Input}{Input}\SetKwInOut{Output}{Output}
\Input{Graph $G=(N,E)$ with costs and lengths as before; budget $\bar b$; radius $\bar d$; optional total time limit $T_{\text{total}}$.}
\Output{Best feasible value $l(\text{root})$ with global upper bound $u(\text{root})$; facilities $Y$; upgraded edges $X$.}
Decompose $G$ into blocks $\mathcal{B}$ and articulation points $\mathcal{A}$.\;
(Optional) merge adjacent blocks of size $\le 3$ to reduce DP overhead.\;
Build the block-cut tree $\mathcal{T}$ and root it at the largest block $B_r$.\;
\ForEach{block $B \in \mathcal{B}$}{
  Precompute $\mathcal{R}_B$, the set of relevant budgets, and AP distances.\;
}
Allocate per-block time $T_B$ proportional to $|E_B|$ (with cap on the root block).\;
\ForEach{$B$ in post-order traversal of the block nodes of $\mathcal{T}$}{
  Compute $B.\Delta_{\max} := \sum_{C \in \mathcal{C}_B} \Delta_{\max}(C)$.\tcp*[r]{aggregate child gaps}
  \ForEach{$(b, r)$ in the relevant states of $B$, budget outer, signed radius inner}{
    $T_{b,r} \;\leftarrow\;$ LP-guided share of $T_B$ \tcp*[r]{Section~\ref{sec:appendix_implementation}}
    \uIf{$T_{b,r} < T_{\min}$}{
      $(l, u) \;\leftarrow\; \textsc{SkipState}(B, b, r)$ \tcp*[r]{bounds from neighbor states, e.g.~\eqref{eq:int_from_blank}}
    }
    \Else{
      Build the block MILP for $(b,r)$ (Section~\ref{app:flow_border}), reading the children's
      stored values $V(C,b',r')$ over the non-dominated states $(b',r') \in \Lambda_C$ through
      the child selectors.\;
      Warm-start from the greedy heuristic or the incumbent of a neighboring state, and solve
      within $T_{b,r}$; record primal $l$ and dual $\tilde u$.\;
      $u \;\leftarrow\; \tilde u + B.\Delta_{\max}$ \tcp*[r]{Prop.~\ref{prop:additive_gap}}
    }
    Store $l, u$ in the DP tables for $V(B,b,r)$.\;
  }
  Mark states $(b',r')$ with $u(B,b',r') \le l(B,b,r)$ for some $b \le b'$, $r \le r'$ as
  dominated, exclude them from $\Lambda_B$, and pass $\Lambda_B$ with
  $\Delta_{\max}(B)$ to the parent.\;
}
$\mathrm{opt} \;\leftarrow\; l(B_r, \bar b, 0)$\;
$(Y, X) \;\leftarrow\; \textsc{BacktrackBCT}(B_r, \bar b, 0)$\;
\Return{$(\mathrm{opt},\, u(B_r,\bar b, 0),\, Y, X)$}
\end{algorithm}
}

The block-level MILP is re-used across states of the same block
(Section~\ref{app:implementation_notes}), and the aggregate $B.\Delta_{\max}$ is constructed
once per block, in line with the inductive bound argument in
Section~\ref{app:bound_propagation}.
Skipped states fill the DP table with the conservative bounds of that section, so that the
parent's selectors remain valid even when no full MILP solve has happened.

\section{Instance details}
\label{sec:appendix_instances}
See Table \ref{tab:instances} for a summary of the BC-tree instances used in the experiments of Section~\ref{sec:experiments}.

\begin{table}[htbp]
	\centering
	\caption{Summary of BC-tree instances. $n$: nodes; $m$: edges; Bl: biconnected components (before merging); APs: articulation points; AP$^\circ$: maximum AP degree; Max/Min/Avg: block sizes.}
	\label{tab:instances}
	\scriptsize
	\setlength{\tabcolsep}{2.5pt}
	\begin{tabular}[t]{rlrrrrrrrr}
		\hline
		ID & Instance & $n$ & $m$ & Bl & APs & AP$^\circ$ & Max & Min & Avg \\
		\hline
		1 & mixed\_2full\_sevilla\_sioux\_4b & 152 & 332 & 2 & 1 & 2 & 96 & 57 & 76.5 \\
		2 & mixed\_sevilla2\_grid6x6\_3\_5b & 202 & 418 & 5 & 4 & 2 & 49 & 36 & 41.2 \\
		3 & mixed\_sevilla3\_grid5x5\_2\_grid4x4\_2\_7b & 223 & 485 & 7 & 6 & 2 & 49 & 16 & 32.7 \\
		4 & mixed\_sevilla4\_grid5x5\_2\_6b & 241 & 556 & 6 & 5 & 2 & 49 & 25 & 41.0 \\
		5 & mixed\_sevilla5\_grid5x5\_1\_6b & 265 & 635 & 6 & 5 & 2 & 49 & 25 & 45.0 \\
		6 & sevilla\_5\_5b & 241 & 595 & 5 & 4 & 2 & 49 & 49 & 49.0 \\
		7 & sevilla\_6\_6b & 289 & 714 & 6 & 5 & 2 & 49 & 49 & 49.0 \\
		8 & sioux\_10copies\_8b & 216 & 374 & 6 & 5 & 2 & 57 & 24 & 36.8 \\
		9 & sioux\_11copies\_2b & 251 & 418 & 2 & 1 & 2 & 228 & 24 & 126.0 \\
		10 & sioux\_3subnets\_5b & 51 & 76 & 5 & 4 & 2 & 24 & 2 & 11.0 \\
		11 & sioux\_3subnets\_6b & 66 & 101 & 6 & 5 & 2 & 24 & 2 & 11.8 \\
		12 & sioux\_4subnets\_18b & 78 & 111 & 18 & 15 & 3 & 35 & 2 & 5.3 \\
		13 & sioux\_4subnets\_20b & 62 & 85 & 20 & 16 & 3 & 34 & 2 & 4.0 \\
		14 & sioux\_5copies\_5b & 116 & 190 & 5 & 4 & 2 & 24 & 24 & 24.0 \\
		15 & sioux\_5subnets\_10b & 74 & 108 & 10 & 9 & 2 & 60 & 2 & 8.3 \\
		16 & sioux\_5subnets\_34b & 87 & 111 & 34 & 30 & 3 & 16 & 2 & 3.5 \\
		17 & sioux\_7copies\_2b & 159 & 265 & 2 & 1 & 2 & 136 & 24 & 80.0 \\
		18 & sioux\_8b & 47 & 76 & 4 & 3 & 2 & 43 & 2 & 12.5 \\
		19 & sioux\_8copies\_4b & 170 & 297 & 2 & 1 & 2 & 113 & 58 & 85.5 \\
		20 & sioux\_9b & 56 & 87 & 9 & 7 & 3 & 25 & 2 & 7.1 \\
		21 & sioux\_9copies\_17b & 216 & 350 & 17 & 16 & 2 & 24 & 2 & 13.6 \\
		22 & tree\_balanced\_4b\_118n & 118 & 233 & 4 & 1 & 4 & 49 & 24 & 30.2 \\
		23 & tree\_balanced\_4b\_93n & 93 & 152 & 4 & 1 & 4 & 24 & 24 & 24.0 \\
		24 & tree\_balanced\_7b\_337n & 337 & 833 & 7 & 6 & 2 & 49 & 49 & 49.0 \\
		25 & tree\_random\_3b\_70n & 70 & 114 & 3 & 1 & 3 & 24 & 24 & 24.0 \\
		26 & tree\_random\_7b\_162n & 162 & 266 & 7 & 6 & 2 & 24 & 24 & 24.0 \\
		27 & tree\_random\_7b\_165n & 165 & 272 & 7 & 6 & 2 & 25 & 24 & 24.4 \\
		28 & tree\_star\_5b\_116n\_v2 & 116 & 190 & 5 & 1 & 5 & 24 & 24 & 24.0 \\
		29 & tree\_star\_7b\_162n & 162 & 266 & 7 & 5 & 3 & 24 & 24 & 24.0 \\
		30 & tree\_star\_7b\_162n\_v2 & 162 & 266 & 7 & 1 & 7 & 24 & 24 & 24.0 \\
		31 & tree\_star\_7b\_237n & 237 & 509 & 7 & 5 & 3 & 49 & 24 & 34.7 \\
		32 & tree\_star\_7b\_237n\_v2 & 237 & 509 & 7 & 1 & 7 & 49 & 24 & 34.7 \\
		33 & tree\_star\_7b\_239n & 239 & 513 & 7 & 1 & 7 & 49 & 24 & 35.0 \\
		34 & tree\_star\_8b\_262n & 262 & 551 & 8 & 1 & 8 & 49 & 24 & 33.6 \\
		\hline
	\end{tabular}
\end{table}

\section{Practical solve scenarios: experimental details}
\label{sec:appendix_practical_solve}

This section documents the setup for the two practical solve scenarios discussed in the main text (Figure~\ref{fig:budget_and_time_sensitivity}).
Both experiments are run on the same five instances: the largest networks in our benchmark that did not reach optimality within one hour for either solver, where the choice of approach has the highest practical impact.
All runs use a single thread and the same Gurobi configuration as in the main computational study; the BC-tree DP uses the LP-guided state weighting described in Section~\ref{sec:appendix_implementation}.
Reported coverage values are in both cases the mean over the five instances.

\paragraph{Coverage--budget curve (Figure~\ref{fig:budget_curve}).}
We split the interval $[0,\bar{b}]$ into $20$ equally spaced budget values $b_i = \tfrac{i}{19}\,\bar{b}$ for $i=0,\dots,19$, at coverage radius factor $\gamma = 0.10$.
The BC-tree DP solves all $20$ budget points within a single execution at a one-hour wall-clock limit.
Each FlowMIP variant solves the $20$ points sequentially under the same one-hour total budget; the difference between variants is solely the per-point time allocation, which is proportional to $\log(1+b_i)$ for \emph{FlowMIP-LogTime}, to $b_i$ for \emph{FlowMIP-LinTime}, and to $b_i^2$ for \emph{FlowMIP-QuadTime}, with the weights normalized so that the per-point times sum to the total one-hour budget.

\paragraph{Time-limit sensitivity (Figure~\ref{fig:time_sensitivity}).}
We solve each instance at its default budget for total time limits geometrically spaced between roughly one minute and one hour.
For each time limit, both the BC-tree DP and FlowMIP are run independently under that single limit (no warm-starting across runs) and the best incumbent at the limit is recorded.


\section{Tree DP: additional results}
\label{sec:additional_tree_results}

Figure~\ref{fig:tree_dp_budget_radius} reports execution times on the 20-node tree experiment (324 test cases: $\bar{b}, \bar{d} \in \{10, 20, \ldots, 60\}$, nine random trees per combination) as a function of $\bar{b}$ and $\bar{d}$ separately.
Both algorithms scale with increasing parameter values, but the MILP shows increasing variance and longer tails (up to $11.6$ seconds), while the DP stays below $1.3$ seconds.
Table~\ref{tab:tree_ratio_grid} gives the underlying ratio of mean runtimes per parameter combination.
The MILP is faster on average in 18 of the 36 cells, all of them at low $\bar{b}$ or low $\bar{d}$, but its advantage there is over solves that are already cheap: across those cells the MILP averages $0.11$ seconds and the DP $0.21$ seconds, and no single solve of either algorithm exceeds $1.5$ seconds.
The effect is most pronounced in the $\bar{d} = 10$ column, where the MILP wins all 54 test cases by a median factor of $6.5$.
In the remaining 18 cells the ordering reverses and the absolute times matter more: the MILP averages $1.84$ seconds against the DP's $0.54$ seconds.

\begin{table}[htbp]
	\centering
	\caption{Ratio of mean MILP runtime to mean DP runtime on 20-node trees (nine trees per cell). Values above~1 indicate that the DP is faster.}
	\label{tab:tree_ratio_grid}
	\begin{tabular}{lrrrrrr}
		\toprule
		& \multicolumn{6}{c}{Coverage radius $\bar{d}$} \\
		\cmidrule(l){2-7}
		Budget $\bar{b}$ & 10 & 20 & 30 & 40 & 50 & 60 \\
		\midrule
		10 & 0.28 & 0.40 & 0.53 & 0.62 & 0.70 & 0.74 \\
		20 & 0.17 & 0.67 & 0.73 & 0.91 & 1.13 & 1.10 \\
		30 & 0.23 & 0.83 & 1.46 & 1.94 & 2.02 & 2.24 \\
		40 & 0.30 & 0.87 & 1.65 & 2.51 & 3.57 & 4.18 \\
		50 & 0.18 & 0.77 & 1.68 & 2.95 & 3.96 & 5.19 \\
		60 & 0.16 & 0.58 & 2.29 & 3.97 & 4.66 & 5.29 \\
		\bottomrule
	\end{tabular}
\end{table}

\begin{figure}[htbp]
	\centering
	\begin{subfigure}[b]{0.48\textwidth}
		\centering
		\includegraphics[width=\textwidth]{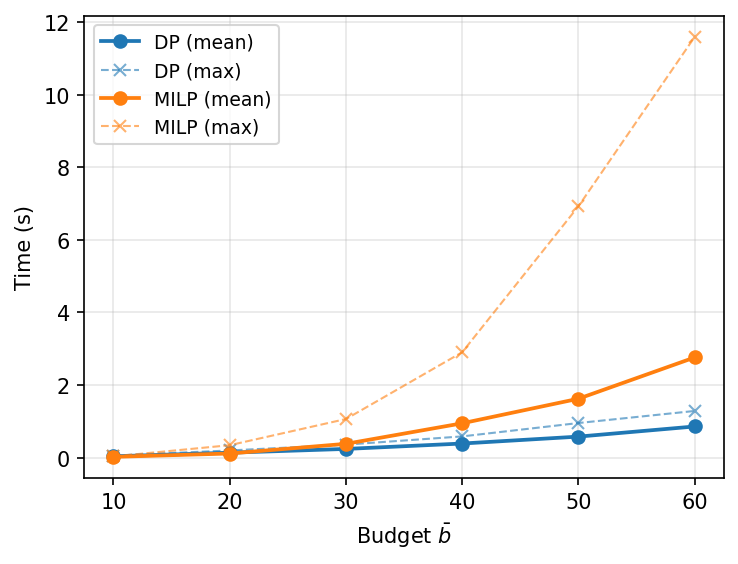}
		\caption{Execution time vs.\ budget $\bar{b}$.}
		\label{fig:tree_dp_budget}
	\end{subfigure}
	\hfill
	\begin{subfigure}[b]{0.48\textwidth}
		\centering
		\includegraphics[width=\textwidth]{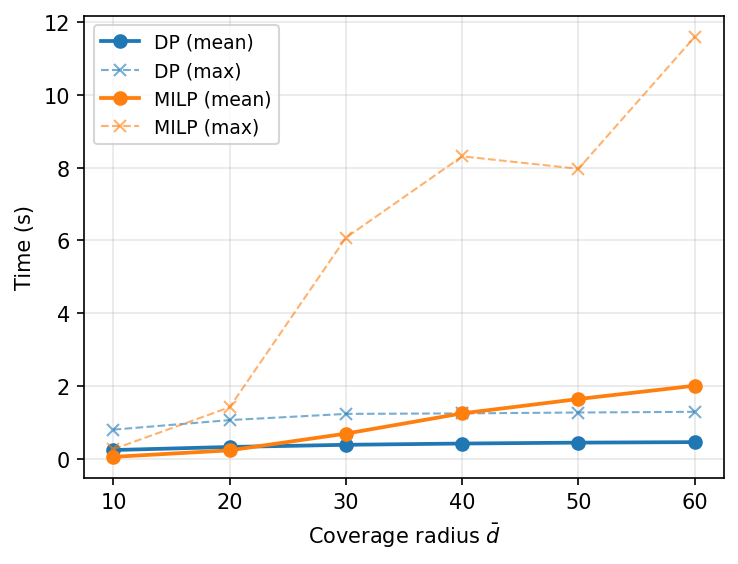}
		\caption{Execution time vs.\ coverage radius $\bar{d}$.}
		\label{fig:tree_dp_radius}
	\end{subfigure}
	\caption{MILP and DP execution times on 20-node random trees across 36 parameter combinations.}
	\label{fig:tree_dp_budget_radius}
\end{figure}

Figure~\ref{fig:tree_dp_n_moderate} shows the scaling behavior at the two lower parameter settings, $\bar{b}=20,\, \bar{d}=40$ and $\bar{b}=30,\, \bar{d}=30$.
Both complement Figure~\ref{fig:tree_dp_n_high} in the main text, which shows the higher-parameter setting $\bar{b}=40,\, \bar{d}=50$ where the DP's margin is far larger.
At $\bar{b}=30,\, \bar{d}=30$ the DP is ahead in mean runtime at every instance size, but only by a factor of $1.1$ to $2.2$, and it wins on 50 of the 64 individual trees.
At $\bar{b}=20,\, \bar{d}=40$ the two algorithms are close: the mean runtime ratio ranges from $0.8$ to $1.9$, the MILP is ahead on average at $n=40$, and the DP wins on 42 of the 64 individual trees.
Across all three scaling settings the MILP's slowest tree of a given size takes $1.3$ to $2.4$ times its mean runtime, whereas the DP stays within $1.1$ to $1.3$ times its own mean, the same spread effect seen at $n = 20$.

\begin{figure}[htbp]
	\centering
	\begin{subfigure}[b]{0.48\textwidth}
		\centering
		\includegraphics[width=\textwidth]{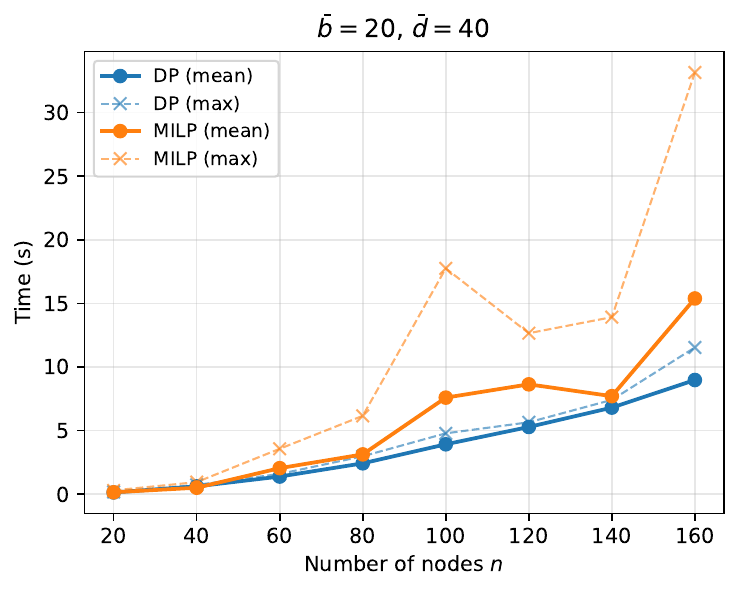}
		\caption{$\bar{b} = 20,\, \bar{d} = 40$.}
		\label{fig:tree_dp_n_b20r40}
	\end{subfigure}
	\hfill
	\begin{subfigure}[b]{0.48\textwidth}
		\centering
		\includegraphics[width=\textwidth]{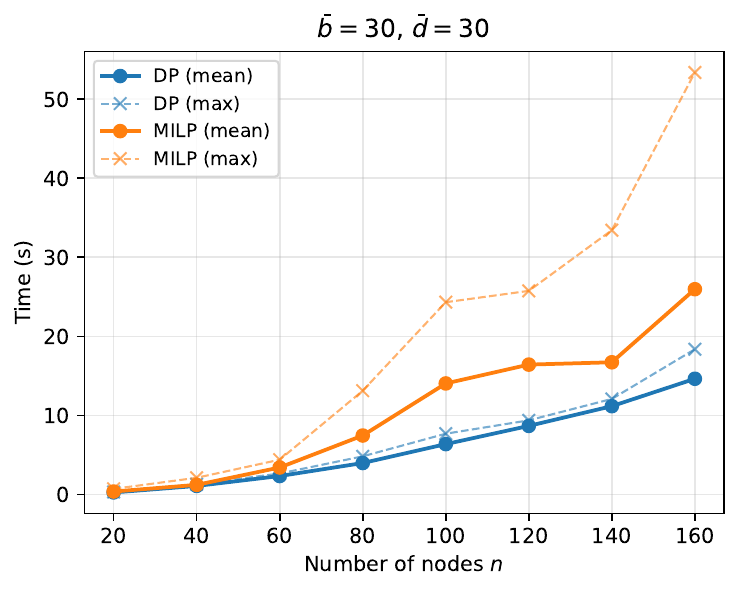}
		\caption{$\bar{b} = 30,\, \bar{d} = 30$.}
		\label{fig:tree_dp_n_b30r30}
	\end{subfigure}
	\caption{Scaling in $n$ at the two lower parameter settings, eight trees per size. Solid lines are means over the trees of a given size, dashed lines the corresponding maxima.}
	\label{fig:tree_dp_n_moderate}
\end{figure}

\section{BC-tree DP: time performance profile}
\label{sec:additional_bct_results}

Figure~\ref{fig:performance_profile_time} reports the absolute time performance profile over the 306 BC-tree test cases,
that is, the fraction of cases each method solves to proven optimality within a given time.
It is the counterpart to the solution quality profile of Figure~\ref{fig:performance_profile} in the main text:
FlowMIP closes more instances outright,
while DP-FlowMIP returns the better solution on the majority of the cases that neither method solves to optimality.

\begin{figure}[htbp]
	\centering
	\includegraphics[width=0.7\textwidth]{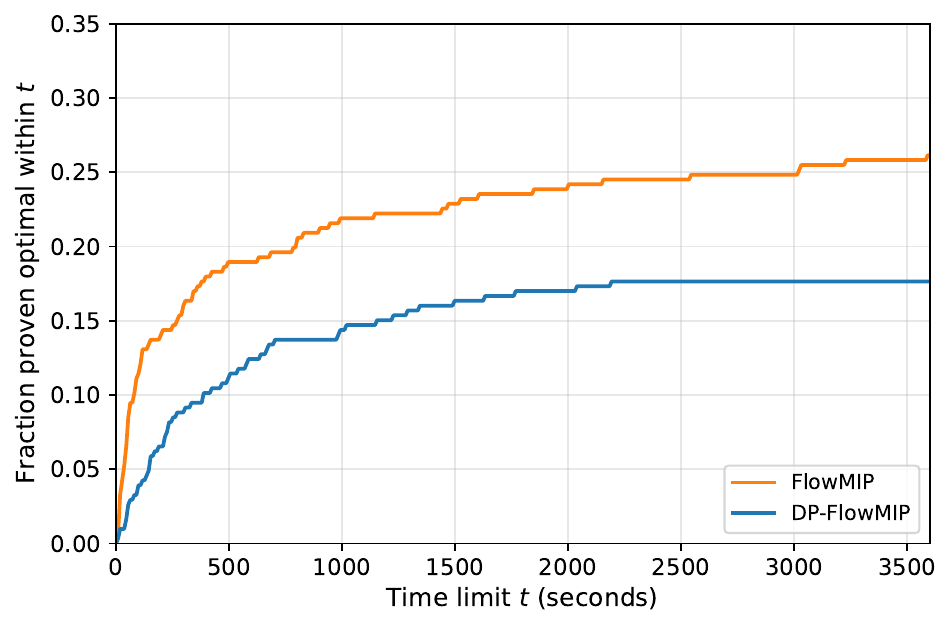}
	\caption{Absolute time performance profile.
		The y-axis shows the fraction of the 306 test cases solved to optimality within time $t$ seconds.
		FlowMIP reaches optimality on more instances (26.1\% at $t=3600$s) than DP-FlowMIP (17.6\%).
		The lazy-cut variants rarely reach optimality within the time limit.
		This complements the solution quality profile in Figure~\ref{fig:performance_profile}: while FlowMIP proves optimality faster, DP-FlowMIP finds better solutions when the time limit is reached.}
	\label{fig:performance_profile_time}
\end{figure}

\section{Complete per-case results}
\label{sec:additional_results}

Table~\ref{tab:full_results} below gives the per-test-case results for all 306 cases (34 instances $\times$ 9 budget/coverage combinations). The same data is also available as a CSV file in the code repository for programmatic access.

\scriptsize
\setlength{\tabcolsep}{3pt}
\begin{longtable}{rrrr|rrrr|rrrr}
\caption{Complete results for all 306 test cases. ID = instance ID (Table~\ref{tab:instances}); LB = best solution found,
UB = dual bound, Opt = optimality proven, Time = seconds.
Budget factor $\beta \in \{0.3, 0.5, 0.7\}$; coverage radius factor $\gamma \in \{0.05, 0.1, 0.15\}$.}
\label{tab:full_results} \\
\toprule
& & & & \multicolumn{4}{c|}{BC-tree DP} & \multicolumn{4}{c}{FlowMIP} \\
ID & $\beta$ & $\gamma$ & $|V|$ & LB & UB & Opt & Time & LB & UB & Opt & Time \\
\midrule
\endfirsthead
\multicolumn{12}{c}{\textit{Table~\ref{tab:full_results} continued}} \\
\toprule
& & & & \multicolumn{4}{c|}{BC-tree DP} & \multicolumn{4}{c}{FlowMIP} \\
ID & $\beta$ & $\gamma$ & $|V|$ & LB & UB & Opt & Time & LB & UB & Opt & Time \\
\midrule
\endhead
\midrule
\multicolumn{12}{r}{\textit{Continued on next page}} \\
\endfoot
\bottomrule
\endlastfoot

1 & 0.3 & 0.05 & 152 & 109 & 227 & No & 3519 & 117 & 151 & No & 3606 \\
1 & 0.3 & 0.1 & 152 & 116 & 230 & No & 3519 & 118 & 151 & No & 3607 \\
1 & 0.3 & 0.15 & 152 & 116 & 231 & No & 3519 & 116 & 150 & No & 3606 \\
1 & 0.5 & 0.05 & 152 & 176 & 292 & No & 3517 & 177 & 225 & No & 3607 \\
1 & 0.5 & 0.1 & 152 & 178 & 296 & No & 3518 & 182 & 222 & No & 3607 \\
1 & 0.5 & 0.15 & 152 & 179 & 299 & No & 3518 & 178 & 224 & No & 3608 \\
1 & 0.7 & 0.05 & 152 & 234 & 351 & No & 3511 & 227 & 279 & No & 3608 \\
1 & 0.7 & 0.1 & 152 & 227 & 352 & No & 3512 & 227 & 278 & No & 3608 \\
1 & 0.7 & 0.15 & 152 & 228 & 356 & No & 3513 & 229 & 277 & No & 3608 \\
\midrule
2 & 0.3 & 0.05 & 202 & 155 & 546 & No & 3568 & 159 & 192 & No & 3611 \\
2 & 0.3 & 0.1 & 202 & 157 & 596 & No & 3393 & 156 & 191 & No & 3611 \\
2 & 0.3 & 0.15 & 202 & 156 & 595 & No & 3174 & 152 & 189 & No & 3611 \\
2 & 0.5 & 0.05 & 202 & 239 & 785 & No & 3566 & 232 & 287 & No & 3612 \\
2 & 0.5 & 0.1 & 202 & 241 & 810 & No & 3567 & 232 & 285 & No & 3612 \\
2 & 0.5 & 0.15 & 202 & 241 & 817 & No & 3568 & 234 & 283 & No & 3612 \\
2 & 0.7 & 0.05 & 202 & 307 & 912 & No & 3565 & 303 & 347 & No & 3613 \\
2 & 0.7 & 0.1 & 202 & 306 & 937 & No & 3567 & 281 & 346 & No & 3614 \\
2 & 0.7 & 0.15 & 202 & 307 & 949 & No & 3568 & 293 & 345 & No & 3614 \\
\midrule
3 & 0.3 & 0.05 & 223 & 178 & 622 & No & 3569 & 176 & 219 & No & 3615 \\
3 & 0.3 & 0.1 & 223 & 177 & 646 & No & 3570 & 171 & 216 & No & 3615 \\
3 & 0.3 & 0.15 & 223 & 176 & 653 & No & 3571 & 172 & 217 & No & 3615 \\
3 & 0.5 & 0.05 & 223 & 264 & 762 & No & 3566 & 263 & 320 & No & 3616 \\
3 & 0.5 & 0.1 & 223 & 265 & 786 & No & 3570 & 255 & 317 & No & 3617 \\
3 & 0.5 & 0.15 & 223 & 266 & 794 & No & 3572 & 256 & 317 & No & 3617 \\
3 & 0.7 & 0.05 & 223 & 339 & 840 & No & 3567 & 325 & 390 & No & 3619 \\
3 & 0.7 & 0.1 & 223 & 340 & 870 & No & 3571 & 331 & 388 & No & 3619 \\
3 & 0.7 & 0.15 & 223 & 333 & 867 & No & 3572 & 322 & 389 & No & 3619 \\
\midrule
4 & 0.3 & 0.05 & 241 & 201 & 682 & No & 3418 & 207 & 245 & No & 3618 \\
4 & 0.3 & 0.1 & 241 & 206 & 715 & No & 3574 & 204 & 247 & No & 3619 \\
4 & 0.3 & 0.15 & 241 & 203 & 719 & No & 3304 & 201 & 246 & No & 3619 \\
4 & 0.5 & 0.05 & 241 & 298 & 870 & No & 3573 & 293 & 351 & No & 3622 \\
4 & 0.5 & 0.1 & 241 & 299 & 880 & No & 3575 & 279 & 349 & No & 3622 \\
4 & 0.5 & 0.15 & 241 & 298 & 891 & No & 3575 & 294 & 351 & No & 3623 \\
4 & 0.7 & 0.05 & 241 & 370 & 949 & No & 3572 & 367 & 429 & No & 3625 \\
4 & 0.7 & 0.1 & 241 & 371 & 967 & No & 3575 & 367 & 425 & No & 3626 \\
4 & 0.7 & 0.15 & 241 & 374 & 977 & No & 3576 & 364 & 425 & No & 3626 \\
\midrule
5 & 0.3 & 0.05 & 265 & 199 & 739 & No & 3556 & 203 & 266 & No & 3625 \\
5 & 0.3 & 0.1 & 265 & 199 & 774 & No & 3558 & 194 & 264 & No & 3626 \\
5 & 0.3 & 0.15 & 265 & 199 & 773 & No & 3558 & 199 & 266 & No & 3626 \\
5 & 0.5 & 0.05 & 265 & 310 & 935 & No & 3557 & 307 & 388 & No & 3629 \\
5 & 0.5 & 0.1 & 265 & 311 & 955 & No & 3559 & 301 & 389 & No & 3630 \\
5 & 0.5 & 0.15 & 265 & 312 & 978 & No & 3559 & 299 & 385 & No & 3630 \\
5 & 0.7 & 0.05 & 265 & 390 & 1021 & No & 3558 & 386 & 477 & No & 3632 \\
5 & 0.7 & 0.1 & 265 & 388 & 1049 & No & 3559 & 384 & 469 & No & 3633 \\
5 & 0.7 & 0.15 & 265 & 388 & 1064 & No & 3560 & 384 & 470 & No & 3635 \\
\midrule
6 & 0.3 & 0.05 & 241 & 193 & 663 & No & 3247 & 185 & 247 & No & 3620 \\
6 & 0.3 & 0.1 & 241 & 194 & 659 & No & 3067 & 188 & 252 & No & 3620 \\
6 & 0.3 & 0.15 & 241 & 196 & 679 & No & 3484 & 186 & 246 & No & 3620 \\
6 & 0.5 & 0.05 & 241 & 297 & 839 & No & 3555 & 282 & 364 & No & 3624 \\
6 & 0.5 & 0.1 & 241 & 299 & 867 & No & 3557 & 275 & 357 & No & 3625 \\
6 & 0.5 & 0.15 & 241 & 298 & 867 & No & 3556 & 284 & 356 & No & 3625 \\
6 & 0.7 & 0.05 & 241 & 367 & 911 & No & 3556 & 367 & 429 & No & 3626 \\
6 & 0.7 & 0.1 & 241 & 365 & 940 & No & 3556 & 361 & 430 & No & 3626 \\
6 & 0.7 & 0.15 & 241 & 377 & 961 & No & 3557 & 363 & 429 & No & 3627 \\
\midrule
7 & 0.3 & 0.05 & 289 & 222 & 1134 & No & 3563 & 233 & 292 & No & 3634 \\
7 & 0.3 & 0.1 & 289 & 228 & 1163 & No & 3562 & 233 & 302 & No & 3634 \\
7 & 0.3 & 0.15 & 289 & 222 & 1170 & No & 3563 & 233 & 296 & No & 3634 \\
7 & 0.5 & 0.05 & 289 & 351 & 1500 & No & 3562 & 350 & 441 & No & 3639 \\
7 & 0.5 & 0.1 & 289 & 354 & 1547 & No & 3563 & 320 & 448 & No & 3641 \\
7 & 0.5 & 0.15 & 289 & 352 & 1547 & No & 3563 & 332 & 447 & No & 3640 \\
7 & 0.7 & 0.05 & 289 & 444 & 1719 & No & 3563 & 448 & 544 & No & 3644 \\
7 & 0.7 & 0.1 & 289 & 436 & 1745 & No & 3563 & 445 & 543 & No & 3645 \\
7 & 0.7 & 0.15 & 289 & 441 & 1740 & No & 3563 & 443 & 541 & No & 3647 \\
\midrule
8 & 0.3 & 0.05 & 216 & 155 & 669 & No & 3469 & 163 & 197 & No & 3611 \\
8 & 0.3 & 0.1 & 216 & 157 & 729 & No & 3434 & 164 & 197 & No & 3612 \\
8 & 0.3 & 0.15 & 216 & 158 & 743 & No & 3350 & 163 & 195 & No & 3612 \\
8 & 0.5 & 0.05 & 216 & 249 & 1011 & No & 3229 & 248 & 299 & No & 3613 \\
8 & 0.5 & 0.1 & 216 & 248 & 1066 & No & 3473 & 245 & 297 & No & 3613 \\
8 & 0.5 & 0.15 & 216 & 249 & 1093 & No & 3256 & 248 & 298 & No & 3613 \\
8 & 0.7 & 0.05 & 216 & 325 & 1203 & No & 3187 & 315 & 377 & No & 3615 \\
8 & 0.7 & 0.1 & 216 & 319 & 1244 & No & 3566 & 314 & 373 & No & 3615 \\
8 & 0.7 & 0.15 & 216 & 313 & 1258 & No & 3568 & 312 & 375 & No & 3615 \\
\midrule
9 & 0.3 & 0.05 & 251 & 176 & 242 & No & 3450 & 173 & 214 & No & 3614 \\
9 & 0.3 & 0.1 & 251 & 175 & 242 & No & 3450 & 172 & 215 & No & 3615 \\
9 & 0.3 & 0.15 & 251 & 176 & 234 & No & 3450 & 173 & 214 & No & 3615 \\
9 & 0.5 & 0.05 & 251 & 262 & 347 & No & 3447 & 263 & 324 & No & 3618 \\
9 & 0.5 & 0.1 & 251 & 261 & 338 & No & 3449 & 255 & 321 & No & 3617 \\
9 & 0.5 & 0.15 & 251 & 257 & 338 & No & 3451 & 256 & 320 & No & 3618 \\
9 & 0.7 & 0.05 & 251 & 344 & 431 & No & 3447 & 336 & 404 & No & 3618 \\
9 & 0.7 & 0.1 & 251 & 336 & 429 & No & 3449 & 333 & 407 & No & 3620 \\
9 & 0.7 & 0.15 & 251 & 329 & 428 & No & 3450 & 329 & 401 & No & 3620 \\
\midrule
10 & 0.3 & 0.05 & 51 & 31 & 31 & Yes & 39 & 31 & 31 & Yes & 2 \\
10 & 0.3 & 0.1 & 51 & 31 & 31 & Yes & 98 & 31 & 31 & Yes & 11 \\
10 & 0.3 & 0.15 & 51 & 28 & 28 & Yes & 133 & 28 & 28 & Yes & 15 \\
10 & 0.5 & 0.05 & 51 & 48 & 48 & Yes & 181 & 48 & 48 & Yes & 52 \\
10 & 0.5 & 0.1 & 51 & 48 & 48 & Yes & 327 & 48 & 48 & Yes & 57 \\
10 & 0.5 & 0.15 & 51 & 48 & 48 & Yes & 496 & 48 & 48 & Yes & 114 \\
10 & 0.7 & 0.05 & 51 & 62 & 62 & Yes & 227 & 62 & 62 & Yes & 76 \\
10 & 0.7 & 0.1 & 51 & 64 & 64 & Yes & 386 & 64 & 64 & Yes & 986 \\
10 & 0.7 & 0.15 & 51 & 64 & 64 & Yes & 583 & 64 & 64 & Yes & 683 \\
\midrule
11 & 0.3 & 0.05 & 66 & 43 & 43 & Yes & 670 & 43 & 43 & Yes & 45 \\
11 & 0.3 & 0.1 & 66 & 43 & 43 & Yes & 1152 & 43 & 43 & Yes & 38 \\
11 & 0.3 & 0.15 & 66 & 43 & 43 & Yes & 1766 & 43 & 43 & Yes & 47 \\
11 & 0.5 & 0.05 & 66 & 67 & 84 & No & 1648 & 67 & 67 & Yes & 192 \\
11 & 0.5 & 0.1 & 66 & 66 & 83 & No & 1775 & 66 & 66 & Yes & 491 \\
11 & 0.5 & 0.15 & 66 & 66 & 90 & No & 1883 & 66 & 66 & Yes & 421 \\
11 & 0.7 & 0.05 & 66 & 86 & 106 & No & 1483 & 86 & 86 & Yes & 1467 \\
11 & 0.7 & 0.1 & 66 & 85 & 107 & No & 1607 & 85 & 90 & No & 3601 \\
11 & 0.7 & 0.15 & 66 & 85 & 106 & No & 1721 & 85 & 86 & No & 3601 \\
\midrule
12 & 0.3 & 0.05 & 78 & 51 & 51 & Yes & 270 & 51 & 51 & Yes & 30 \\
12 & 0.3 & 0.1 & 78 & 50 & 50 & No & 488 & 50 & 50 & Yes & 53 \\
12 & 0.3 & 0.15 & 78 & 50 & 50 & Yes & 696 & 50 & 50 & Yes & 51 \\
12 & 0.5 & 0.05 & 78 & 79 & 79 & Yes & 467 & 79 & 79 & Yes & 304 \\
12 & 0.5 & 0.1 & 78 & 79 & 79 & No & 887 & 79 & 79 & Yes & 471 \\
12 & 0.5 & 0.15 & 78 & 79 & 79 & Yes & 1219 & 79 & 79 & Yes & 292 \\
12 & 0.7 & 0.05 & 78 & 100 & 100 & Yes & 502 & 100 & 100 & Yes & 2540 \\
12 & 0.7 & 0.1 & 78 & 101 & 101 & No & 983 & 101 & 101 & Yes & 3026 \\
12 & 0.7 & 0.15 & 78 & 101 & 101 & Yes & 1288 & 101 & 101 & Yes & 2000 \\
\midrule
13 & 0.3 & 0.05 & 62 & 33 & 33 & Yes & 48 & 33 & 33 & Yes & 12 \\
13 & 0.3 & 0.1 & 62 & 33 & 33 & Yes & 114 & 33 & 33 & Yes & 15 \\
13 & 0.3 & 0.15 & 62 & 32 & 32 & Yes & 149 & 32 & 32 & Yes & 14 \\
13 & 0.5 & 0.05 & 62 & 54 & 54 & Yes & 144 & 54 & 54 & Yes & 39 \\
13 & 0.5 & 0.1 & 62 & 53 & 53 & Yes & 388 & 53 & 53 & Yes & 113 \\
13 & 0.5 & 0.15 & 62 & 52 & 52 & Yes & 540 & 52 & 52 & Yes & 101 \\
13 & 0.7 & 0.05 & 62 & 72 & 72 & Yes & 230 & 72 & 72 & Yes & 341 \\
13 & 0.7 & 0.1 & 62 & 71 & 71 & Yes & 667 & 71 & 71 & Yes & 263 \\
13 & 0.7 & 0.15 & 62 & 71 & 71 & Yes & 979 & 71 & 71 & Yes & 206 \\
\midrule
14 & 0.3 & 0.05 & 116 & 76 & 150 & No & 2623 & 76 & 87 & No & 3603 \\
14 & 0.3 & 0.1 & 116 & 76 & 185 & No & 2638 & 76 & 87 & No & 3602 \\
14 & 0.3 & 0.15 & 116 & 76 & 199 & No & 2634 & 76 & 86 & No & 3603 \\
14 & 0.5 & 0.05 & 116 & 117 & 245 & No & 2495 & 117 & 137 & No & 3603 \\
14 & 0.5 & 0.1 & 116 & 117 & 282 & No & 2498 & 117 & 136 & No & 3603 \\
14 & 0.5 & 0.15 & 116 & 117 & 296 & No & 2496 & 117 & 136 & No & 3603 \\
14 & 0.7 & 0.05 & 116 & 151 & 280 & No & 2388 & 154 & 174 & No & 3603 \\
14 & 0.7 & 0.1 & 116 & 154 & 346 & No & 2402 & 154 & 174 & No & 3603 \\
14 & 0.7 & 0.15 & 116 & 154 & 354 & No & 2409 & 153 & 172 & No & 3603 \\
\midrule
15 & 0.3 & 0.05 & 74 & 44 & 44 & Yes & 43 & 44 & 44 & Yes & 53 \\
15 & 0.3 & 0.1 & 74 & 42 & 42 & Yes & 51 & 42 & 42 & Yes & 89 \\
15 & 0.3 & 0.15 & 74 & 42 & 42 & Yes & 61 & 42 & 42 & Yes & 79 \\
15 & 0.5 & 0.05 & 74 & 66 & 66 & Yes & 300 & 66 & 66 & Yes & 372 \\
15 & 0.5 & 0.1 & 74 & 65 & 65 & Yes & 1015 & 65 & 65 & Yes & 1601 \\
15 & 0.5 & 0.15 & 74 & 64 & 64 & Yes & 1626 & 64 & 64 & Yes & 901 \\
15 & 0.7 & 0.05 & 74 & 85 & 85 & Yes & 2031 & 85 & 85 & Yes & 1442 \\
15 & 0.7 & 0.1 & 74 & 84 & 87 & No & 3420 & 84 & 88 & No & 3601 \\
15 & 0.7 & 0.15 & 74 & 84 & 87 & No & 3421 & 84 & 88 & No & 3601 \\
\midrule
16 & 0.3 & 0.05 & 87 & 48 & 48 & Yes & 226 & 48 & 48 & Yes & 19 \\
16 & 0.3 & 0.1 & 87 & 47 & 47 & No & 598 & 47 & 47 & Yes & 34 \\
16 & 0.3 & 0.15 & 87 & 47 & 47 & Yes & 986 & 47 & 47 & Yes & 38 \\
16 & 0.5 & 0.05 & 87 & 74 & 74 & Yes & 634 & 74 & 74 & Yes & 115 \\
16 & 0.5 & 0.1 & 87 & 73 & 110 & No & 1660 & 73 & 73 & Yes & 293 \\
16 & 0.5 & 0.15 & 87 & 72 & 121 & No & 2013 & 72 & 72 & Yes & 338 \\
16 & 0.7 & 0.05 & 87 & 97 & 97 & Yes & 1335 & 97 & 97 & Yes & 798 \\
16 & 0.7 & 0.1 & 87 & 96 & 157 & No & 1988 & 96 & 96 & Yes & 3018 \\
16 & 0.7 & 0.15 & 87 & 95 & 164 & No & 2132 & 95 & 95 & Yes & 3226 \\
\midrule
17 & 0.3 & 0.05 & 159 & 123 & 136 & No & 3460 & 122 & 138 & No & 3605 \\
17 & 0.3 & 0.1 & 159 & 123 & 166 & No & 3467 & 123 & 138 & No & 3604 \\
17 & 0.3 & 0.15 & 159 & 123 & 143 & No & 3467 & 123 & 138 & No & 3605 \\
17 & 0.5 & 0.05 & 159 & 182 & 205 & No & 3461 & 180 & 207 & No & 3605 \\
17 & 0.5 & 0.1 & 159 & 183 & 229 & No & 3462 & 180 & 204 & No & 3605 \\
17 & 0.5 & 0.15 & 159 & 182 & 230 & No & 3462 & 181 & 205 & No & 3605 \\
17 & 0.7 & 0.05 & 159 & 231 & 279 & No & 3458 & 229 & 255 & No & 3606 \\
17 & 0.7 & 0.1 & 159 & 231 & 279 & No & 3459 & 228 & 255 & No & 3606 \\
17 & 0.7 & 0.15 & 159 & 229 & 282 & No & 3461 & 225 & 253 & No & 3606 \\
\midrule
18 & 0.3 & 0.05 & 47 & 38 & 38 & Yes & 4 & 38 & 38 & Yes & 4 \\
18 & 0.3 & 0.1 & 47 & 37 & 37 & No & 13 & 37 & 37 & Yes & 13 \\
18 & 0.3 & 0.15 & 47 & 37 & 37 & Yes & 9 & 37 & 37 & Yes & 13 \\
18 & 0.5 & 0.05 & 47 & 57 & 57 & Yes & 14 & 57 & 57 & Yes & 37 \\
18 & 0.5 & 0.1 & 47 & 56 & 56 & Yes & 214 & 56 & 56 & Yes & 142 \\
18 & 0.5 & 0.15 & 47 & 56 & 56 & Yes & 143 & 56 & 56 & Yes & 153 \\
18 & 0.7 & 0.05 & 47 & 71 & 71 & Yes & 51 & 71 & 71 & Yes & 58 \\
18 & 0.7 & 0.1 & 47 & 71 & 71 & Yes & 424 & 71 & 71 & Yes & 392 \\
18 & 0.7 & 0.15 & 47 & 70 & 70 & Yes & 574 & 70 & 70 & Yes & 627 \\
\midrule
19 & 0.3 & 0.05 & 170 & 130 & 221 & No & 3526 & 128 & 152 & No & 3607 \\
19 & 0.3 & 0.1 & 170 & 129 & 229 & No & 3527 & 129 & 151 & No & 3607 \\
19 & 0.3 & 0.15 & 170 & 129 & 228 & No & 3527 & 125 & 152 & No & 3607 \\
19 & 0.5 & 0.05 & 170 & 199 & 296 & No & 3524 & 197 & 232 & No & 3608 \\
19 & 0.5 & 0.1 & 170 & 198 & 298 & No & 3525 & 198 & 230 & No & 3608 \\
19 & 0.5 & 0.15 & 170 & 199 & 301 & No & 3524 & 195 & 227 & No & 3608 \\
19 & 0.7 & 0.05 & 170 & 254 & 349 & No & 3516 & 244 & 290 & No & 3608 \\
19 & 0.7 & 0.1 & 170 & 253 & 356 & No & 3519 & 247 & 290 & No & 3608 \\
19 & 0.7 & 0.15 & 170 & 250 & 355 & No & 3520 & 247 & 289 & No & 3608 \\
\midrule
20 & 0.3 & 0.05 & 56 & 44 & 44 & No & 33 & 44 & 44 & Yes & 15 \\
20 & 0.3 & 0.1 & 56 & 44 & 44 & Yes & 75 & 44 & 44 & Yes & 23 \\
20 & 0.3 & 0.15 & 56 & 42 & 42 & Yes & 95 & 42 & 42 & Yes & 26 \\
20 & 0.5 & 0.05 & 56 & 69 & 69 & No & 68 & 69 & 69 & Yes & 29 \\
20 & 0.5 & 0.1 & 56 & 67 & 67 & Yes & 146 & 67 & 67 & Yes & 84 \\
20 & 0.5 & 0.15 & 56 & 67 & 67 & Yes & 210 & 67 & 67 & Yes & 90 \\
20 & 0.7 & 0.05 & 56 & 86 & 86 & No & 80 & 86 & 86 & Yes & 60 \\
20 & 0.7 & 0.1 & 56 & 86 & 86 & Yes & 165 & 86 & 86 & Yes & 274 \\
20 & 0.7 & 0.15 & 56 & 86 & 86 & Yes & 249 & 86 & 86 & Yes & 360 \\
\midrule
21 & 0.3 & 0.05 & 216 & 130 & 592 & No & 2865 & 134 & 160 & No & 3610 \\
21 & 0.3 & 0.1 & 216 & 130 & 661 & No & 2868 & 135 & 162 & No & 3610 \\
21 & 0.3 & 0.15 & 216 & 127 & 698 & No & 2888 & 134 & 157 & No & 3610 \\
21 & 0.5 & 0.05 & 216 & 207 & 851 & No & 2811 & 209 & 250 & No & 3612 \\
21 & 0.5 & 0.1 & 216 & 207 & 892 & No & 2852 & 208 & 249 & No & 3612 \\
21 & 0.5 & 0.15 & 216 & 203 & 947 & No & 2882 & 205 & 246 & No & 3612 \\
21 & 0.7 & 0.05 & 216 & 272 & 988 & No & 2817 & 274 & 321 & No & 3613 \\
21 & 0.7 & 0.1 & 216 & 273 & 1056 & No & 2853 & 274 & 326 & No & 3613 \\
21 & 0.7 & 0.15 & 216 & 271 & 1086 & No & 2909 & 272 & 325 & No & 3613 \\
\midrule
22 & 0.3 & 0.05 & 118 & 90 & 187 & No & 2804 & 91 & 96 & No & 3603 \\
22 & 0.3 & 0.1 & 118 & 90 & 200 & No & 2799 & 91 & 95 & No & 3603 \\
22 & 0.3 & 0.15 & 118 & 91 & 191 & No & 2799 & 91 & 96 & No & 3603 \\
22 & 0.5 & 0.05 & 118 & 129 & 258 & No & 2629 & 134 & 143 & No & 3603 \\
22 & 0.5 & 0.1 & 118 & 132 & 258 & No & 2646 & 132 & 142 & No & 3603 \\
22 & 0.5 & 0.15 & 118 & 132 & 255 & No & 2648 & 132 & 142 & No & 3603 \\
22 & 0.7 & 0.05 & 118 & 165 & 290 & No & 2492 & 170 & 182 & No & 3604 \\
22 & 0.7 & 0.1 & 118 & 166 & 292 & No & 2533 & 168 & 180 & No & 3604 \\
22 & 0.7 & 0.15 & 118 & 166 & 285 & No & 2481 & 166 & 180 & No & 3604 \\
\midrule
23 & 0.3 & 0.05 & 93 & 66 & 114 & No & 3045 & 66 & 66 & Yes & 781 \\
23 & 0.3 & 0.1 & 93 & 64 & 122 & No & 3032 & 64 & 64 & Yes & 800 \\
23 & 0.3 & 0.15 & 93 & 64 & 111 & No & 3033 & 64 & 64 & Yes & 942 \\
23 & 0.5 & 0.05 & 93 & 98 & 170 & No & 2639 & 98 & 103 & No & 3602 \\
23 & 0.5 & 0.1 & 93 & 97 & 176 & No & 2655 & 97 & 102 & No & 3602 \\
23 & 0.5 & 0.15 & 93 & 96 & 173 & No & 2664 & 96 & 101 & No & 3602 \\
23 & 0.7 & 0.05 & 93 & 122 & 204 & No & 2369 & 121 & 131 & No & 3602 \\
23 & 0.7 & 0.1 & 93 & 121 & 203 & No & 2387 & 121 & 129 & No & 3602 \\
23 & 0.7 & 0.15 & 93 & 121 & 200 & No & 2391 & 121 & 129 & No & 3602 \\
\midrule
24 & 0.3 & 0.05 & 337 & 257 & 1287 & No & 3568 & 258 & 343 & No & 3646 \\
24 & 0.3 & 0.1 & 337 & 247 & 1301 & No & 3568 & 256 & 347 & No & 3647 \\
24 & 0.3 & 0.15 & 337 & 252 & 1297 & No & 3568 & 260 & 343 & No & 3648 \\
24 & 0.5 & 0.05 & 337 & 405 & 1700 & No & 3568 & 390 & 516 & No & 3657 \\
24 & 0.5 & 0.1 & 337 & 407 & 1735 & No & 3568 & 381 & 516 & No & 3659 \\
24 & 0.5 & 0.15 & 337 & 404 & 1720 & No & 3569 & 389 & 512 & No & 3657 \\
24 & 0.7 & 0.05 & 337 & 500 & 1950 & No & 3568 & 512 & 638 & No & 3666 \\
24 & 0.7 & 0.1 & 337 & 514 & 162665 & No & 3568 & 511 & 637 & No & 3666 \\
24 & 0.7 & 0.15 & 337 & 503 & 163539 & No & 3569 & 511 & 637 & No & 3667 \\
\midrule
25 & 0.3 & 0.05 & 70 & 41 & 41 & Yes & 1490 & 41 & 41 & Yes & 92 \\
25 & 0.3 & 0.1 & 70 & 40 & 40 & Yes & 2185 & 40 & 40 & Yes & 246 \\
25 & 0.3 & 0.15 & 70 & 40 & 54 & No & 2885 & 40 & 40 & Yes & 99 \\
25 & 0.5 & 0.05 & 70 & 64 & 113 & No & 2829 & 64 & 64 & Yes & 1139 \\
25 & 0.5 & 0.1 & 70 & 65 & 121 & No & 2819 & 65 & 65 & Yes & 1517 \\
25 & 0.5 & 0.15 & 70 & 64 & 118 & No & 2830 & 64 & 64 & Yes & 827 \\
25 & 0.7 & 0.05 & 70 & 83 & 118 & No & 2492 & 83 & 88 & No & 3601 \\
25 & 0.7 & 0.1 & 70 & 85 & 124 & No & 2507 & 85 & 89 & No & 3601 \\
25 & 0.7 & 0.15 & 70 & 85 & 138 & No & 2498 & 85 & 88 & No & 3601 \\
\midrule
26 & 0.3 & 0.05 & 162 & 106 & 295 & No & 2768 & 107 & 122 & No & 3605 \\
26 & 0.3 & 0.1 & 162 & 105 & 405 & No & 2761 & 107 & 121 & No & 3605 \\
26 & 0.3 & 0.15 & 162 & 104 & 453 & No & 2762 & 107 & 121 & No & 3605 \\
26 & 0.5 & 0.05 & 162 & 167 & 535 & No & 2678 & 170 & 193 & No & 3606 \\
26 & 0.5 & 0.1 & 162 & 166 & 616 & No & 2672 & 170 & 192 & No & 3606 \\
26 & 0.5 & 0.15 & 162 & 164 & 656 & No & 2693 & 168 & 191 & No & 3606 \\
26 & 0.7 & 0.05 & 162 & 220 & 638 & No & 2636 & 223 & 250 & No & 3606 \\
26 & 0.7 & 0.1 & 162 & 221 & 807 & No & 2657 & 222 & 246 & No & 3606 \\
26 & 0.7 & 0.15 & 162 & 220 & 812 & No & 2677 & 220 & 246 & No & 3606 \\
\midrule
27 & 0.3 & 0.05 & 165 & 100 & 341 & No & 2769 & 106 & 123 & No & 3605 \\
27 & 0.3 & 0.1 & 165 & 100 & 385 & No & 2762 & 105 & 120 & No & 3606 \\
27 & 0.3 & 0.15 & 165 & 100 & 392 & No & 2771 & 104 & 120 & No & 3606 \\
27 & 0.5 & 0.05 & 165 & 161 & 504 & No & 2683 & 165 & 192 & No & 3606 \\
27 & 0.5 & 0.1 & 165 & 161 & 581 & No & 2701 & 162 & 190 & No & 3606 \\
27 & 0.5 & 0.15 & 165 & 160 & 587 & No & 2686 & 163 & 190 & No & 3606 \\
27 & 0.7 & 0.05 & 165 & 216 & 610 & No & 2640 & 216 & 248 & No & 3606 \\
27 & 0.7 & 0.1 & 165 & 212 & 680 & No & 2687 & 214 & 248 & No & 3607 \\
27 & 0.7 & 0.15 & 165 & 211 & 693 & No & 2689 & 212 & 246 & No & 3607 \\
\midrule
28 & 0.3 & 0.05 & 116 & 85 & 162 & No & 2638 & 85 & 85 & Yes & 3589 \\
28 & 0.3 & 0.1 & 116 & 85 & 172 & No & 2639 & 85 & 85 & Yes & 1843 \\
28 & 0.3 & 0.15 & 116 & 84 & 177 & No & 2604 & 84 & 84 & Yes & 2155 \\
28 & 0.5 & 0.05 & 116 & 124 & 228 & No & 2253 & 124 & 132 & No & 3603 \\
28 & 0.5 & 0.1 & 116 & 123 & 214 & No & 2281 & 123 & 131 & No & 3603 \\
28 & 0.5 & 0.15 & 116 & 122 & 227 & No & 2243 & 122 & 129 & No & 3603 \\
28 & 0.7 & 0.05 & 116 & 155 & 258 & No & 2049 & 154 & 168 & No & 3603 \\
28 & 0.7 & 0.1 & 116 & 155 & 256 & No & 2074 & 154 & 166 & No & 3603 \\
28 & 0.7 & 0.15 & 116 & 154 & 259 & No & 2050 & 154 & 164 & No & 3603 \\
\midrule
29 & 0.3 & 0.05 & 162 & 111 & 313 & No & 2652 & 111 & 126 & No & 3605 \\
29 & 0.3 & 0.1 & 162 & 111 & 340 & No & 2655 & 111 & 128 & No & 3605 \\
29 & 0.3 & 0.15 & 162 & 111 & 341 & No & 2601 & 112 & 126 & No & 3605 \\
29 & 0.5 & 0.05 & 162 & 173 & 448 & No & 2359 & 172 & 196 & No & 3606 \\
29 & 0.5 & 0.1 & 162 & 171 & 468 & No & 2392 & 172 & 194 & No & 3606 \\
29 & 0.5 & 0.15 & 162 & 171 & 474 & No & 2351 & 173 & 197 & No & 3606 \\
29 & 0.7 & 0.05 & 162 & 226 & 546 & No & 2237 & 226 & 252 & No & 3607 \\
29 & 0.7 & 0.1 & 162 & 224 & 561 & No & 2282 & 224 & 251 & No & 3607 \\
29 & 0.7 & 0.15 & 162 & 222 & 562 & No & 2258 & 223 & 253 & No & 3607 \\
\midrule
30 & 0.3 & 0.05 & 162 & 135 & 285 & No & 2553 & 135 & 139 & No & 3605 \\
30 & 0.3 & 0.1 & 162 & 133 & 273 & No & 2528 & 133 & 137 & No & 3605 \\
30 & 0.3 & 0.15 & 162 & 133 & 292 & No & 2519 & 133 & 137 & No & 3605 \\
30 & 0.5 & 0.05 & 162 & 194 & 354 & No & 2229 & 194 & 203 & No & 3606 \\
30 & 0.5 & 0.1 & 162 & 193 & 353 & No & 2236 & 193 & 203 & No & 3606 \\
30 & 0.5 & 0.15 & 162 & 193 & 365 & No & 2212 & 193 & 202 & No & 3606 \\
30 & 0.7 & 0.05 & 162 & 241 & 404 & No & 2104 & 243 & 253 & No & 3607 \\
30 & 0.7 & 0.1 & 162 & 240 & 400 & No & 2121 & 241 & 250 & No & 3607 \\
30 & 0.7 & 0.15 & 162 & 241 & 417 & No & 2114 & 240 & 251 & No & 3607 \\
\midrule
31 & 0.3 & 0.05 & 237 & 188 & 479 & No & 3563 & 185 & 225 & No & 3617 \\
31 & 0.3 & 0.1 & 237 & 187 & 475 & No & 3565 & 182 & 224 & No & 3617 \\
31 & 0.3 & 0.15 & 237 & 188 & 473 & No & 3564 & 176 & 223 & No & 3617 \\
31 & 0.5 & 0.05 & 237 & 273 & 581 & No & 3560 & 270 & 321 & No & 3619 \\
31 & 0.5 & 0.1 & 237 & 273 & 583 & No & 3563 & 265 & 322 & No & 3621 \\
31 & 0.5 & 0.15 & 237 & 275 & 587 & No & 3563 & 266 & 321 & No & 3620 \\
31 & 0.7 & 0.05 & 237 & 341 & 646 & No & 3560 & 345 & 400 & No & 3623 \\
31 & 0.7 & 0.1 & 237 & 342 & 661 & No & 3566 & 334 & 398 & No & 3623 \\
31 & 0.7 & 0.15 & 237 & 342 & 654 & No & 3565 & 330 & 396 & No & 3624 \\
\midrule
32 & 0.3 & 0.05 & 237 & 209 & 459 & No & 3563 & 207 & 230 & No & 3617 \\
32 & 0.3 & 0.1 & 237 & 209 & 477 & No & 3564 & 200 & 229 & No & 3617 \\
32 & 0.3 & 0.15 & 237 & 209 & 468 & No & 3564 & 205 & 230 & No & 3617 \\
32 & 0.5 & 0.05 & 237 & 303 & 551 & No & 3560 & 298 & 327 & No & 3620 \\
32 & 0.5 & 0.1 & 237 & 301 & 572 & No & 3562 & 299 & 327 & No & 3620 \\
32 & 0.5 & 0.15 & 237 & 302 & 576 & No & 3560 & 294 & 326 & No & 3619 \\
32 & 0.7 & 0.05 & 237 & 367 & 628 & No & 3559 & 369 & 401 & No & 3622 \\
32 & 0.7 & 0.1 & 237 & 367 & 637 & No & 3564 & 366 & 396 & No & 3623 \\
32 & 0.7 & 0.15 & 237 & 367 & 635 & No & 3564 & 347 & 396 & No & 3622 \\
\midrule
33 & 0.3 & 0.05 & 239 & 189 & 503 & No & 3018 & 190 & 217 & No & 3617 \\
33 & 0.3 & 0.1 & 239 & 188 & 503 & No & 3004 & 189 & 215 & No & 3618 \\
33 & 0.3 & 0.15 & 239 & 188 & 499 & No & 3004 & 189 & 216 & No & 3618 \\
33 & 0.5 & 0.05 & 239 & 284 & 616 & No & 2991 & 283 & 318 & No & 3620 \\
33 & 0.5 & 0.1 & 239 & 283 & 617 & No & 2976 & 279 & 319 & No & 3620 \\
33 & 0.5 & 0.15 & 239 & 282 & 594 & No & 2986 & 285 & 316 & No & 3620 \\
33 & 0.7 & 0.05 & 239 & 357 & 693 & No & 3043 & 352 & 399 & No & 3622 \\
33 & 0.7 & 0.1 & 239 & 359 & 696 & No & 3009 & 344 & 396 & No & 3623 \\
33 & 0.7 & 0.15 & 239 & 359 & 694 & No & 3003 & 353 & 398 & No & 3623 \\
\midrule
34 & 0.3 & 0.05 & 262 & 213 & 554 & No & 2968 & 202 & 243 & No & 3622 \\
34 & 0.3 & 0.1 & 262 & 214 & 568 & No & 2977 & 204 & 243 & No & 3622 \\
34 & 0.3 & 0.15 & 262 & 212 & 559 & No & 2979 & 202 & 241 & No & 3622 \\
34 & 0.5 & 0.05 & 262 & 310 & 665 & No & 2919 & 311 & 347 & No & 3625 \\
34 & 0.5 & 0.1 & 262 & 311 & 659 & No & 2929 & 311 & 347 & No & 3625 \\
34 & 0.5 & 0.15 & 262 & 308 & 682 & No & 2924 & 312 & 346 & No & 3625 \\
34 & 0.7 & 0.05 & 262 & 386 & 742 & No & 2966 & 388 & 428 & No & 3628 \\
34 & 0.7 & 0.1 & 262 & 390 & 744 & No & 2977 & 386 & 427 & No & 3628 \\
34 & 0.7 & 0.15 & 262 & 388 & 754 & No & 2980 & 387 & 424 & No & 3629 \\
\end{longtable}

\end{document}